\newtheoremstyle{thmstyle}
  {\medskipamount}
  {\smallskipamount}
  {\slshape}
  {0pt}
  {\bfseries}
  {.}
  { }
  {\thmname{#1}\thmnumber{ #2}{\normalfont\thmnote{ (#3)}}}
\newtheoremstyle{plainstyle}
  {\medskipamount}
  {\smallskipamount}
  {\rmfamily}
  {0pt}
  {\bfseries}
  {.}
  { }
  {\thmname{#1}\thmnumber{ #2}{\normalfont\thmnote{ (#3)}}}
\theoremstyle{thmstyle}
\newtheorem{theorem}{Theorem}[section]
\newtheorem{lemma}[theorem]{Lemma}
\newtheorem{corollary}[theorem]{Corollary}
\newtheorem{proposition}[theorem]{Proposition}
\newtheorem{claim}[theorem]{Claim}
\theoremstyle{plainstyle}
\newtheorem{definition}[theorem]{Definition}
\newtheorem{remark}[theorem]{Remark}
\newenvironment{proofof}[1]{\begin{proof}[Proof of #1.]}{\end{proof}}
\setlist[enumerate]{label={\roman*.}, ref={(\roman*)}}
\newcommand{\df}{\stackrel{\text{def}}{=}}
\newcommand{\place}{\mathord{-}}
\newcommand{\comp}{\mathbin{\circ}}
\newcommand{\rest}{\mathord{\vert}}
\newcommand{\function}[2]{\colon #1 \rightarrow #2}
\newcommand{\injection}[2]{\colon #1 \rightarrowtail #2}
\DeclareMathOperator{\End}{End}
\DeclareMathOperator{\Aut}{Aut}
\DeclareMathOperator{\Fix}{Fix}
\DeclareMathOperator{\GL}{GL}
\DeclareMathOperator{\im}{im}
\newcommand{\NN}{\mathbb{N}}
\newcommand{\RR}{\mathbb{R}}
\newcommand{\One}{\mathbbm{1}}
\newcommand{\cC}{\mathcal{C}}
\newcommand{\cF}{\mathcal{F}}
\newcommand{\cU}{\mathcal{U}}
\newcommand{\cV}{\mathcal{V}}
\newcommand{\fS}{\mathfrak{S}}
\def\Holder{H\"{o}lder}
\def\bigraphdraw#1#2#3{
  \begin{tikzpicture}
    \pgfmathsetmacro{\topy}{(\n+1)*\leftvertsep/2}
    \foreach \i in {1,...,\n}{
      \pgfmathsetmacro{\y}{\topy - \i * \leftvertsep}
      \coordinate (P\i) at (0,\y);
      \filldraw (P\i) circle (\ptsize);
      \node[left] at (P\i) {$\i$};
    }
    \pgfmathsetmacro{\topyright}{(#1+1)*\rightvertsep/2}
    \foreach[count=\i] \s in {#2}{
      \pgfmathsetmacro{\y}{\topyright - \i * \rightvertsep}
      \coordinate (R\i) at (\horsep, \y);
      \filldraw (R\i) circle (\ptsize);
      \node[right] at (R\i) {$\{\s\}$};
      \foreach \j in \s {
        \draw (P\j) -- (R\i);
      }
    }
    \pgfmathsetmacro{\x}{\horsep/2}
    \node[below] at (\x,\capy) {#3};
  \end{tikzpicture}
}
\title{Left-cut-percolation and induced-Sidorenko bigraphs}
\author{Leonardo N.~Coregliano\thanks{Institute for Advanced Study, {\tt lenacore@ias.edu}. This material is based upon work supported by the National Science Foundation, and by the IAS School of Mathematics.}}
\begin{document}
\maketitle

\begin{abstract}
  A Sidorenko bigraph is one whose density in a bigraphon $W$ is minimized precisely when $W$ is
  constant. Several techniques of the literature to prove the Sidorenko property consist of
  decomposing (typically in a tree decomposition) the bigraph into smaller building blocks with
  stronger properties. One prominent such technique is that of $N$-decompositions of Conlon--Lee,
  which uses weakly \Holder\ (or weakly norming) bigraphs as building blocks. In turn, to obtain
  weakly \Holder\ bigraphs, it is typical to use the chain of implications reflection bigraph
  $\implies$ cut-percolating bigraph $\implies$ weakly \Holder\ bigraph. In an earlier result by the
  author with Razborov, we provided a generalization of $N$-decompositions, called reflective tree
  decompositions, that uses much weaker building blocks, called induced-Sidorenko bigraphs, to also
  obtain Sidorenko bigraphs.

  In this paper, we show that ``left-sided'' versions of the concepts of reflection bigraph and
  cut-percolating bigraph yield a similar chain of implications: left-reflection bigraph $\implies$
  left-cut-percolating bigraph $\implies$ induced-Sidorenko bigraph. We also show that under mild
  hypotheses, the ``left-sided'' analogue of the weakly \Holder\ property (which is also obtained
  via a similar chain of implications) can be used to improve bounds on another result of
  Conlon--Lee that roughly says that bigraphs with enough vertices on the right side of each
  realized degree have the Sidorenko property.
\end{abstract}

\section{Introduction}

In~\cite{Sid91} (see also~\cite{Sid93}), Sidorenko conjectured that if $\Omega=(X,\mu)$ and $\Lambda=(Y,\nu)$
are probability spaces, $W\function{X\times Y}{\RR_+}$ is a bounded measurable function (a \emph{bigraphon}),
and $G=(V_1,V_2,E)$ is a bipartite graph with a given bipartition $(V_1,V_2)$ (a \emph{bigraph}), then
\begin{align}\label{eq:Sidorenkooriginal}
  t(G,W) & \geq t(\rho,W)^{e(G)},
\end{align}
where $\rho$ denotes the edge bigraph, $e(G)\df\lvert E(G)\rvert$ is the
number of edges of $G$ and
\begin{align}\label{eq:tGW}
  t(G,W)
  & \df
  \int_{X^{V_1}\times Y^{V_2}} \prod_{(v,w)\in E(G)} W(x_v,y_w)\ d(\mu\otimes\nu)(x,y)
\end{align}
is the non-induced (labeled) density of $G$ in $W$. Bigraphs $G$ that satisfy~\eqref{eq:Sidorenkooriginal} for
every $W$ are called \emph{Sidorenko bigraphs}. In fact, Sidorenko's Conjecture is often studied under the
further assumption that $W$ is symmetric (i.e., $\Omega=\Lambda$ and $W(x,y)=W(y,x)$ for every $x,y\in X$) and
let us point out right away that even though some of the literature results cited in this introduction were
proved under this further assumption, they all extend straightforwardly to the asymmetric setting.

Quite a few of the known results on Sidorenko's Conjecture concern deducing that a bigraph is Sidorenko if it
can be decomposed into small parts that are all Sidorenko bigraphs. An alternative way of viewing such results
is that each of them recursively defines a subclass of Sidorenko bigraphs containing some base bigraphs
(typically only the edge bigraph) and that is closed under some ``valid'' amalgamations. One is then
interested in the case when ``valid'' amalgamations allow for highly non-trivial bigraphs in the common set of
the amalgamation.

Arguably, one of the richest such recursively defined subclasses of Sidorenko bigraphs is provided by Szegedy
in~\cite{Sze15}. However, to properly describe which amalgamations are ``valid'' in Szegedy's framework, extra
information needs to be carried around about the bigraphs $G$: essentially a family of probability
distributions on homomorphisms from $G$ to all possible target bigraphons, and ``validness'' of an
amalgamation of $G_1$ and $G_2$ along a set $V$ is determined by compatibility of the marginals of the
distributions on $V$ and a relative entropy inequality. To make the result concrete, Szegedy then shows that
the second condition is satisfied if we instead recursively enforce the common part $G_1\rest_V\cong
G_2\rest_V$ to be a forest (and start from the edge bigraph).

In a different but similar flavor, Conlon--Kim--Lee--Lee~\cite{CKLL18a} showed that that the class of
Sidorenko graphs contains all \emph{strongly tree decomposable} graphs, that is, graphs $G$ containing a tree
decomposition $T = (V(T),E(T))$ such that
\begin{enumerate}
\item For each tree vertex $U\in V(T)$, $G\rest_U$ is a tree.
\item For each tree edge $\{U_1,U_2\}\in E(T)$, the intersection $U_1\cap U_2$ induces a forest
  $G\rest_{U_1\cap U_2}$.
\item For each tree edge $\{U_1,U_2\}\in E(T)$ there is an isomorphism $f$ between the minimal subtrees of
  $G\rest_{U_1}$ and $G\rest_{U_2}$ that contain $U_1\cap U_2$ such that $f$ fixes $U_1\cap U_2$ pointwisely.
\end{enumerate}
This result was also recursively extended to higher-order strong tree decomposable graphs in~\cite{CKLL18b},
but even then the common sets in the amalgamations are still required to be forests.

Moving away from the case when the common sets in the amalgamations are required to be forests,
Conlon--Lee~\cite{CL17} generalized the result above by connecting it to the theory of \emph{weakly norming
  bigraphs} (a.k.a.\ \emph{weakly \Holder\ bigraphs}) of Hatami~\cite{Hat10}, i.e., bigraphs $G$ such that
$W\mapsto t(G,\lvert W\rvert)^{1/e(G)}$ is a norm in the space of bounded measurable functions $X\times
Y\to\RR$ (up to a.e.\ equivalence). Namely, they showed that if $N$ is weakly norming, then the same result
holds for \emph{$N$-decomposable} graphs, which are graphs $G$ containing a tree decomposition $T$ such that
\begin{enumerate}
\item For each tree vertex $U\in V(T)$, $G\rest_U$ is isomorphic to $N$.
\item For each tree edge $\{U_1,U_2\}\in E(T)$ there is an isomorphism between $G\rest_{U_1}$ and
  $G\rest_{U_2}$ that fixes $U_1\cap U_2$ pointwisely.
\end{enumerate}
The class of weakly norming bigraphs is now considerably well understood: in~\cite{DGHRR18}, it is
shown that weakly norming bigraphs are precisely those that satisfy the step Sidorenko property
studied in~\cite{KMPW19} and implicitly in~\cite[\S14.2]{Lov12} and in~\cite{LS21} weakly norming
bigraphs are characterized as those $G$ such that the functional $t(G,\place)$ defined
by~\eqref{eq:tGW} for bounded measurable functions $W\function{X\times Y}{\RR}$ that can also take
negative values is convex (see also~\cite{LS22} for an equivalence with a complex-valued version of
the weak norming property).

Returning to $N$-decompositions, their main disadvantage that very few bigraphs are weakly norming:
Hatami himself showed~\cite[Theorem~2.10(ii)]{Hat10} that any weakly norming bigraph is necessarily
biregular after removing isolated vertices and Sidorenko~\cite{Sid20} showed that every weakly
norming bigraph is necessarily edge-transitive. Nevertheless, in the same paper, Conlon--Lee showed
that many non-trivial examples of weakly norming bigraphs can be obtained through a chain of
implications: reflection bigraph $\implies$ cut-percolating bigraph $\implies$ weakly norming
bigraph (see Sections~\ref{subsec:cutperc} and~\ref{subsec:reflection} below for formal definitions
of these concepts). Perhaps one of the most interesting examples of weakly norming bigraphs arising
from this chain of implications are the incidence bigraphs of the complete $k$-uniform hypergraphs
$K^{(k)}_n$ (see Figure~\ref{fig:incidencecomplete}).

\begin{figure}[htbp]
  \begin{center}
    \input{incidencecomplete}
  \end{center}
\end{figure}

In a recent work with Razborov~\cite[Theorem~3.5]{CR21}, we showed that the strong tree decompositions and
$N$-decompositions results can be both unified and generalized under the weaker notion of reflective tree
decompositions. The formal definition\footnote{For the reader's convenience, this definition and the
  associated result are included in Appendix~\ref{sec:reftree}, but these are not necessary for our results.}
of reflective tree decompositions is more technical than that of $N$-decompositions, but the result in
particular implies that all $N$-decomposable bigraphs are Sidorenko bigraph even when $N$ is only required to
be an \emph{induced-Sidorenko} bigraph, that is, when
\begin{align*}
  \frac{t(N,W)}{t(\rho,W)^{e(N)}} & \geq \frac{t(H,W)}{t(\rho,W)^{e(H)}}
\end{align*}
for every induced subgraph $H$ of $N$ and every bigraphon $W\function{\Omega\times\Lambda}{\RR_+}$ that is
\emph{biregular}, i.e., it satisfies
\begin{align*}
  \int_X W(x',y)\ d\mu(x') = \int_Y W(x,y')\ d\nu(y') & = t(\rho,W)
\end{align*}
for almost every $x\in X$ and almost every $y\in Y$ (every weakly norming bigraph is induced-Sidorenko, see
Remark~\ref{rmk:weakHolder->indSid}).

It was already observed in~\cite{CR21} that there are several induced-Sidorenko bigraphs that are
not weakly norming: the class of induced-Sidorenko bigraph is closed under amalgamations with trees
along a single vertex and the amalgamation of $k$ copies of the $4$-cycle along the same edge
(a.k.a.\ the \emph{$k$-book bigraph}, see Figure~\ref{fig:book}) is induced-Sidorenko.

\begin{figure}[htb]
  \begin{center}
    \begingroup

\def\pointsize{1 pt}
\def\vertsep{1}
\def\horzsep{1}
\def\angle{30}
\def\diststep{0.5}
\def\vertsize{1.5cm}
\def\uppervertsize{0.75cm}

\newcommand{\bookfig}[1]{%
  \begin{tikzpicture}
    \path[use as bounding box] (0,\uppervertsize) rectangle (0,-\vertsize);
    
    \coordinate (P) at (0,0);
    \coordinate (Q) at (0,-\vertsep);

    \pgfmathsetmacro{\secondangle}{180-\angle}

    \foreach \i in {0,...,#1}{%
      \pgfmathsetmacro{\dist}{\i * \diststep}
      \coordinate (A\i) at ($(P) + (\horzsep,0) + (\angle:\dist)$);
      \coordinate (B\i) at ($(Q) + (\horzsep,0) + (-\angle:\dist)$);
      \coordinate (C\i) at ($(P) + (-\horzsep,0) + (\secondangle:\dist)$);
      \coordinate (D\i) at ($(Q) + (-\horzsep,0) + (-\secondangle:\dist)$);
    }

    \filldraw (P) circle (\pointsize);
    \filldraw (Q) circle (\pointsize);

    \foreach \i in {0,...,#1}{%
      \filldraw (A\i) circle (\pointsize);
      \filldraw (B\i) circle (\pointsize);
      \filldraw (C\i) circle (\pointsize);
      \filldraw (D\i) circle (\pointsize);

      \draw (P) -- (A\i) -- (B\i) -- (Q) -- cycle;
      \draw (P) -- (C\i) -- (D\i) -- (Q) -- cycle;
    }
  \end{tikzpicture}
}

\begin{subfigure}[b]{0.3\textwidth}
  \begin{center}
    \bookfig{0}
    \caption*{$B_2$}
  \end{center}
\end{subfigure}
\quad
\begin{subfigure}[b]{0.3\textwidth}
  \begin{center}
    \bookfig{1}
    \caption*{$B_4$}
  \end{center}
\end{subfigure}
\quad
\begin{subfigure}[b]{0.3\textwidth}
  \begin{center}
    \bookfig{2}
    \caption*{$B_6$}
  \end{center}
\end{subfigure}
\caption{Book bigraphs.}
\label{fig:book}
\endgroup
  \end{center}
\end{figure}

\medskip

Our first main result is to show (Theorems~\ref{thm:leftrefl->leftcutperc} and~\ref{thm:leftcutperc->indSid})
how ``left-sided'' versions of the concepts of reflection bigraphs and cut-percolation of Conlon--Lee yield a
similar chain of implications to obtain several examples of induced-Sidorenko bigraphs: left-reflection
bigraph $\implies$ left-cut-percolating bigraph $\implies$ induced-Sidorenko bigraph. Let us point out right
away that even though the natural analogue of weakly norming, called \emph{left-weakly \Holder}, also follows
from left-cut-percolation (Theorem~\ref{thm:leftcutperc->leftweakHolder}, see also
Theorem~\ref{thm:leftrefl->leftweakHolder}), it is a different notion from induced-Sidorenko (and our proof
requires the full power of left-cut-percolation to get induced-Sidorenko). See also
Figure~\ref{fig:implications} for a summary of the implications between these properties of bigraphs. The most
important example of left-reflection bigraph (Theorem~\ref{thm:incidencecomplete}) is the incidence bigraph of
the complete hypergraph $K_n^{k_1,\ldots,k_t}$ on $n$ vertices and in uniformities $k_1,\ldots,k_t$ (see
Figure~\ref{fig:incidencecomplete2}).

\begin{landscape}
\begin{figure}[p]
  \begin{center}
    \begingroup

\begin{tikzcd}[%
    arrows={Rightarrow},
    row sep={1cm},
    column sep={1.65cm},
    math mode=false,
    every label/.append style = {font = \scriptsize},
  ]
  Reflection
  \arrow[r, "\protect{\parbox{2.7cm}{\cite[Corollary~4.9 and~Theorem~4.12]{CL17}}}"]
  \arrow[dd]
  &
  Cut-percolating
  \arrow[rr, "\protect{\cite[Theorem~3.3]{CL17}}"]
  \arrow[dd, "Remark~\ref{rmk:cutperc->leftcutperc}"]
  \arrow[dr, "Remark~\ref{rmk:transitivity}"]
  &
  &
  \begin{tabular}{c}
    Weakly norming\\
    (or weakly \Holder)
  \end{tabular}
  \arrow[dl, "\protect{\cite[Theorem~1]{Sid20}}"']
  \arrow[ddd, bend left=40, "Remark~\ref{rmk:weakHolder->indSid}"]
  \\
  &
  &
  Edge-transitive
  \arrow[r]\arrow[d]
  &
  Biregular
  \arrow[d]
  \\
  Left-reflection
  \arrow[r, "Theorem~\ref{thm:leftrefl->leftcutperc}"]
  \arrow[d, "Theorem~\ref{thm:leftrefl->leftweakHolder}"']
  &
  Left-cut-percolating
  \arrow[r, "Remark~\ref{rmk:transitivity}"]
  \arrow[d, "Theorem~\ref{thm:leftcutperc->leftweakHolder}"]
  \arrow[drr, "Theorem~\ref{thm:leftcutperc->indSid}"']
  &
  \begin{tabular}{c}
    Left-vertex-\\
    transitive
  \end{tabular}
  \arrow[r]
  &
  Left-regular
  \\
  \begin{tabular}{c}
    Right-uniform\\
    color-edge-transitive\\
    left-weakly \Holder
  \end{tabular}
  \arrow[r]
  \arrow[d, "Theorem~\ref{thm:leftweakHolderorbits}"']
  &
  Left-weakly \Holder
  \arrow[drr, "Remark~\ref{rmk:leftweakHolder->Sid}"]
  &
  &
  Induced-Sidorenko
  \arrow[d]
  \\
  Strong Sidorenko
  \arrow[rrr, "Remark~\ref{rmk:strongSid->Sid}"']
  &
  &
  &
  Sidorenko
\end{tikzcd}

\caption{Implications between properties considered under the assumption that the bigraph is
  non-trivial and does not have any isolated vertices. Arrows are labeled with the location of their
  proofs and arrows with easy proofs are either labeled by remarks or unlabeled (when the proof is
  trivial). In this diagram, the properties about colored bigraphs should be read as some coloring
  of the bigraph turns it into a colored bigraph with that property.}
\label{fig:implications}
\endgroup

  \end{center}
\end{figure}
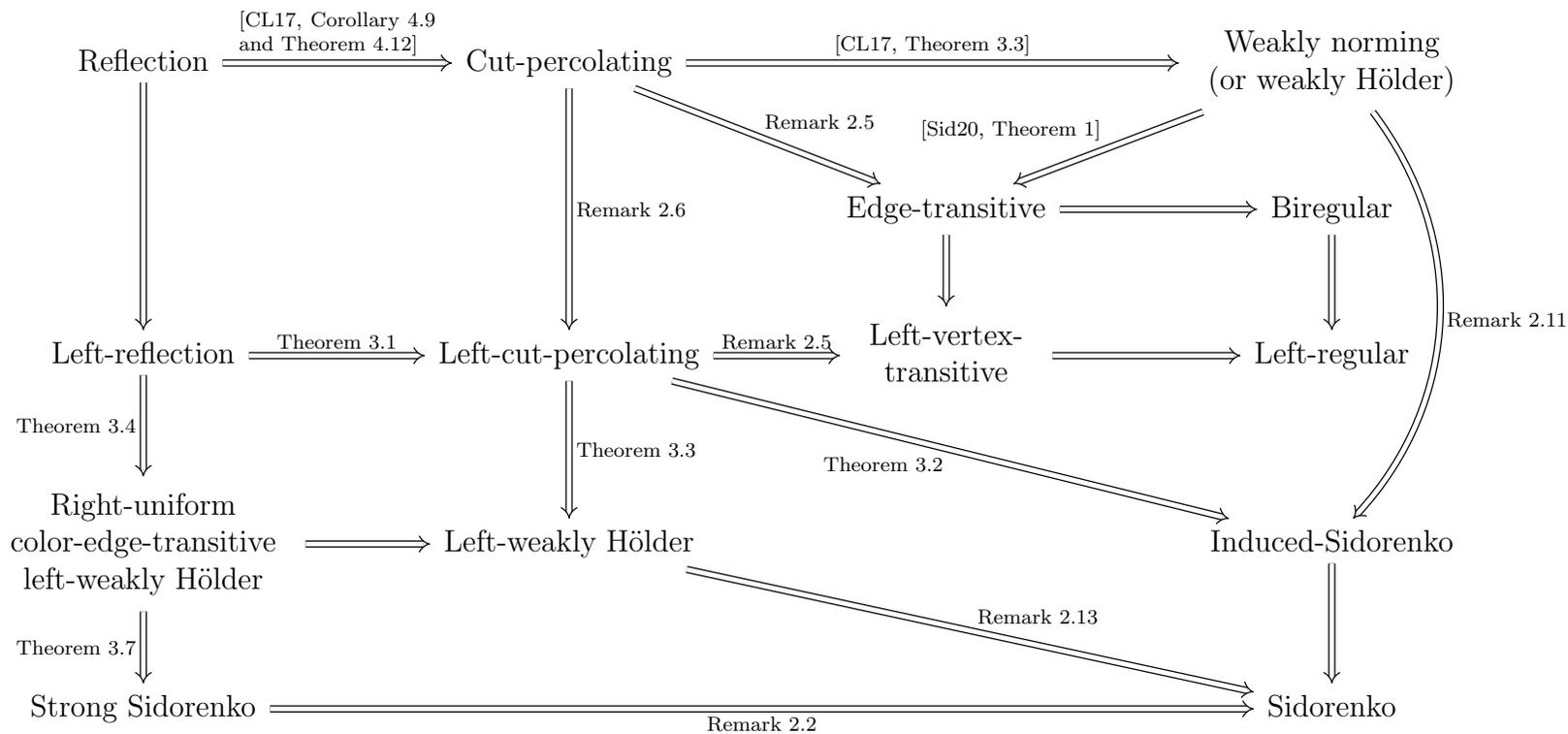
\end{landscape}

\begin{figure}[htb]
  \begin{center}
    \input{incidencecomplete2}
  \end{center}
\end{figure}

\medskip

One of the most interesting applications of the theory of reflection bigraphs of Conlon--Lee (in fact, it uses
a hypergraph analogue of it) is the following theorem.
\begin{theorem}[Conlon--Lee~\protect{\cite[Theorem~1.1]{CL21}}]\label{thm:ConlonLee}
  Let $G=(V_1,V_2,E)$ be a bigraph, let $r$ be the maximum degree of a vertex in $V_2$ and for each
  $k\in\{2,\ldots,r\}$, let $d_k$ be the number of vertices in $V_2$ that have degree $k$.

  If $\binom{\lvert V_1\rvert}{r}\binom{r}{k}$ divides $d_k$ for every $k\in\{2,\ldots,r\}$, then $G$ is a
  Sidorenko bigraph.
\end{theorem}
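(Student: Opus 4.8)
The plan is to reduce, by symmetrization, to a ``balanced'' normal form --- a bigraph built by placing one fixed small gadget on \emph{every} $r$-subset of $V_1$ --- and then to dispatch that normal form with a hypergraph version of the reflection/cut-percolation machinery that produces weakly \Holder\ bigraphs. First I would make the usual reductions: it suffices to prove $t(G,W)\ge 1$ for every nonnegative bounded measurable $W$ with $t(\rho,W)=1$, and a routine reduction (trivial for degree-$0$ right vertices; for a degree-$1$ right vertex at a left vertex $u$, a one-line rescaling of $W$ reduces to $G$ minus that vertex) lets us assume every vertex of $V_2$ has degree in $\{2,\dots,r\}$. Writing $V_1=[n]$ and, for $2\le k\le r$, $g_k(x_1,\dots,x_k)\df\int_Y\prod_{i=1}^k W(x_i,y)\,d\nu(y)$ --- a symmetric nonnegative function that is the same for all degree-$k$ right vertices --- we obtain
\[
  t(G,W)=\EE\Bigl[\,\prod_{v\in V_2}g_{\deg v}\bigl((X_i)_{i\in N(v)}\bigr)\Bigr],
\]
where $X_1,\dots,X_n$ are i.i.d.\ $\mu$, and for each $k$ the neighbourhoods of the degree-$k$ vertices form a multiset of $k$-subsets of $[n]$ of size $d_k$.

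Next I would symmetrize over relabellings of $[n]$. For a tuple $m=(m_{k,S})$ of nonnegative reals indexed by subsets $S\subseteq[n]$ of sizes $k\in\{2,\dots,r\}$, the function $\Lambda(m)\df\log\EE\bigl[\prod_{k,S}g_k\bigl((X_i)_{i\in S}\bigr)^{m_{k,S}}\bigr]$ is a cumulant generating function, hence convex, and it is invariant under the natural action of $\fS_n$ on $[n]$ because the $X_i$ are i.i.d. Since the average over the $\fS_n$-orbit of $G$'s multiplicity tuple is the uniform tuple $m_{k,S}=\ell_k\df d_k/\binom nk$ (an integer because $\binom nk\mid d_k$), Jensen's inequality gives $t(G,W)\ge t(\widehat G,W)$, where $\widehat G$ is the bigraph on left vertex set $[n]$ having, for every $k$ and every $k$-subset $S$, exactly $\ell_k$ right vertices with neighbourhood $S$. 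As $\widehat G$ has the same right-degree sequence as $G$, one has $e(\widehat G)=e(G)$, so it remains to prove that $\widehat G$ is Sidorenko.

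Now I would use the \emph{full} divisibility hypothesis. Since $\binom nr\binom rk\mid d_k$, we may write $\ell_k=\binom{n-k}{r-k}c_k$ with $c_k\in\NN$; as $\binom{n-k}{r-k}$ is the number of $r$-subsets of $[n]$ containing a fixed $k$-subset, distributing the $\binom{n-k}{r-k}$ copies of each $k$-subset among its $r$-supersets shows that $\widehat G$ is isomorphic to the bigraph obtained by placing on every $r$-subset $R\subseteq[n]$ a copy $\Gamma_R$ of one fixed gadget $\Gamma$ on $r$ left vertices --- namely the amalgam along the common left vertex set of $c_k$ copies of the incidence bigraph of $K^{(k)}_r$, over all $k\in\{2,\dots,r\}$. (Equivalently, $\widehat G$ is the amalgam along $[n]$ of $\ell_k$ copies of the incidence bigraph of $K^{(k)}_n$, for $2\le k\le r$.)

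Finally --- and this is the heart --- I would prove that such a ``gadget blow-up over all $r$-subsets'' is Sidorenko by adapting Conlon--Lee's reflection-bigraph argument to the hypergraph setting. The incidence bigraph of $K^{(r)}_n$ is weakly \Holder\ because it comes from a reflection bigraph, and the operation ``place $\Gamma$ on every $r$-subset'' retains the large automorphism group $\fS_n$ together with a family of folding homomorphisms that play the role of the reflections: identifying two left vertices across the $r$-subsets carrying $\Gamma$ produces a homomorphism that can be iterated exactly as in cut-percolation. One then argues that this family percolates the cut isolating a single left vertex, which upgrades the weakly \Holder-type inequality of the underlying complete-hypergraph structure to the Sidorenko inequality for $\widehat G$, hence for $G$. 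I expect the main obstacle to be precisely this percolation step with several uniformities present at once: one must verify that a single family of folds simultaneously governs all the layers (the copies of the incidence bigraphs of $K^{(k)}_n$ for $2\le k\le r$), and it is exactly the divisibility hypotheses that make the layers commensurable enough for this to go through.
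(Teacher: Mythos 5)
The paper does not prove Theorem~\ref{thm:ConlonLee} directly; it cites it from Conlon--Lee and instead proves the strict generalization Theorem~\ref{thm:largeright}, which replaces the divisibility hypothesis by the weaker bound $d_k\geq\binom{v_1(G)}{k}$ (and yields the stronger strong-Sidorenko conclusion). Your preliminary reductions are essentially correct: the pendant-vertex step is handled via \cite[Theorem~2]{Sid91} rather than a literal one-line rescaling of $W$, and your symmetrization via convexity of the cumulant generating function $\Lambda$ plus Jensen is a clean alternative formulation of the \Holder-based symmetrization carried out inside the paper's proof of Theorem~\ref{thm:leftweakHolderorbits}. Your observation that $\widehat G$ is the left-side amalgam of $\ell_k$ copies of the incidence bigraph of $K_n^{(k)}$ for $k\in\{2,\ldots,r\}$ identifies exactly the object the paper works with.

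The genuine gap is your step 5, which you yourself flag as the heart of the matter but leave as a sketch. Showing that this amalgam (equivalently, your gadget blowup over $r$-subsets) is Sidorenko when several uniformities are present simultaneously is the actual content of the theorem, and ``adapting the reflection-bigraph argument to the hypergraph setting'' names the task without carrying it out. In the paper, this is made rigorous by the left-sided machinery: Theorem~\ref{thm:incidencecomplete} realizes the amalgam as an $(S_0;S_{k_1},\ldots,S_{k_t};S_\Delta)$-left-reflection bigraph (note the $k_i$ are not required to be distinct, so repeated uniformities give the $\ell_k$ copies directly); Lemma~\ref{lem:amalgleftcutperc} together with Theorem~\ref{thm:leftrefl->leftcutperc} shows that the reflection folds do percolate the left side simultaneously across all layers, which is precisely the commensurability concern you raise; and Theorem~\ref{thm:leftcutperc->leftweakHolder} with Remark~\ref{rmk:leftweakHolder->Sid} converts left-cut-percolation into the Sidorenko inequality. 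Until you supply these arguments, your proposal is a plan rather than a proof. It is also worth noting that the paper's route through fractional color-powers (Lemma~\ref{lem:leftweakHoldercolorpower}) and the colored left-weakly \Holder\ property dispenses with the $r$-subset gadget organization entirely and with the integrality of $\ell_k$ and $c_k$; this is exactly what allows Theorem~\ref{thm:largeright} to weaken the hypothesis from $\binom{v_1(G)}{r}\binom{r}{k}\mid d_k$ to $d_k\geq\binom{v_1(G)}{k}$.
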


As our second main result, we generalize the theorem above by weakening the divisibility condition to the
condition that $d_k$ is either zero or at least $\binom{\lvert V_1\rvert}{k}$ (Theorem~\ref{thm:largeright})
note that this also improves the first non-zero value of $d_k$ that is valid. In fact, we show that such
bigraphs satisfy a stronger inequality (see Definition~\ref{def:strongSidorenko}) that was initially used by
Sidorenko~\cite[Equation~(2)]{Sid91} to study his conjecture. This result is derived from the aforementioned
fact that the incidence bigraph of $K_n^{k_1,\ldots,k_t}$ is a left-reflection bigraph and we also show how
the left-weakly \Holder\ property (along with some extra mild properties) yields a similar result based on the
symmetries of the underlying bigraph (Theorem~\ref{thm:leftweakHolderorbits}).

\medskip

The paper is organized as follows. In Section~\ref{sec:prelim}, we give definitions and establish the notation
necessary to state our main results. In Section~\ref{sec:mainresults}, we state our main results. In
Section~\ref{sec:leftsided}, we prove all theorems that do not directly involve Sidorenko's Conjecture, that
is, Theorems~\ref{thm:leftrefl->leftcutperc}, \ref{thm:leftcutperc->leftweakHolder}
and~\ref{thm:incidencecomplete} (these theorems are mostly direct analogues of Conlon--Lee~\cite{CL21}). In
Section~\ref{sec:indSid}, we prove Theorem~\ref{thm:leftcutperc->indSid} that establishes the connection with
the induced-Sidorenko property. In Section~\ref{sec:orbits}, we prove Theorem~\ref{thm:largeright} on
Sidorenko bigraphs with enough vertices of each degree and its symmetries-based generalization,
Theorem~\ref{thm:leftweakHolderorbits}. We finish the paper with a brief discussion and some open
problems in Section~\ref{sec:conc}

\section{Preliminaries}
\label{sec:prelim}

Throughout the text, we will use the notation $\NN\df\{0,1,\ldots\}$ for non-negative integers and
$\NN_+\df\NN\setminus\{0\}$ for positive integers. For $n\in\NN$, we let $[n]\df\{1,\ldots,n\}$. We also let
$\RR$ be the set of real numbers and $\RR_+$ the set of non-negative real numbers. Given a set $V$, we denote
its power set by $2^V\df\{W\mid W\subseteq V\}$.

\subsection{Bigraphs}

A \emph{bigraph} is a triple $G=(V_1,V_2,E)$, where $V_1$ and $V_2$ are
disjoint finite sets and $E\subseteq V_1\times V_2$. We will also use the following notation ($i=1,2)$:
\begin{equation}\label{eq:bigraphshorthand}
  \begin{aligned}
    V_i(G) & \df V_i, &
    v_i(G) & \df \lvert V_i\rvert, &
    V(G) & \df V_1\cup V_2,
    \\
    E(G) & \df E, &
    e(G) & \df \lvert E\rvert, &
    v(G) & \df \lvert V_1\rvert + \lvert V_2\rvert.
  \end{aligned}
\end{equation}
For $v\in V(G)$, we denote its \emph{neighborhood} by
\begin{align*}
  N_G(v) & \df \{w\in V(G) \mid (v,w)\in E(G)\lor (w,v)\in E(G)\}.
\end{align*}
and its \emph{degree} by $d_G(v)\df\lvert N_G(v)\rvert$.

We say that $G$ is \emph{left $d$-regular} (\emph{right $d$-regular}, respectively) if $d_G(v) = d$ for every
$v\in V_1(G)$ ($v\in V_2(G)$, resp.). We say that $G$ is \emph{biregular} if it is both left $d_1$-regular and
right $d_2$-regular for some $d_1,d_2\in\NN$. An \emph{isomorphism} between bigraphs $G_1$ and $G_2$ is a
bijection $f\injection{V(G_1)}{V(G_2)}$ such that $f(V_i(G_1))=V_i(G_2)$ ($i=1,2$) and $(v,w)\in E(G_1)\iff
(f(v),f(w))\in E(G_2)$ ($(v,w)\in V_1(G_1)\times V_2(G_1)$); when such an isomorphism exists, we say that
$G_1$ and $G_2$ are \emph{isomorphic}, which is denoted $G_1\cong G_2$. An \emph{automorphism} of $G$ is an
isomorphism of $G$ to itself and we denote the set of automorphisms of $G$ by $\Aut(G)$. A \emph{homomorphism}
from a bigraph $G_1$ to a bigraph $G_2$ is a (not necessarily injective) map $f\function{V(G_1)}{V(G_2)}$ such
that $f(V_i(G_1))\subseteq f(V_i(G_2))$ ($i=1,2$) and $f(E(G_1))\subseteq E(G_2)$. An \emph{endomorphism} of
$G$ is a homomorphism of $G$ to itself. The set of endomorphisms of $G$ is denoted $\End(G)$.

For $U\subseteq V(G)$, we let $G\rest_U$ be the \emph{subgraph induced by $U$ in $G$}, that is, we let
\begin{align*}
  V_i(G\rest_U) & \df V_i(G)\cap U, &
  E(G\rest_U) & \df E(G)\cap ((U\cap V_1(G))\times (U\cap V_2(G))).
\end{align*}
We also let $G - U\df G\rest_{V(G)\setminus U}$. Furthermore, for a set of edges $E\subseteq E(G)$, we let $G
- E\df (V_1(G), V_2(G), E(G)\setminus E)$ be the subgraph obtained from $G$ by removing the edges in $E$.

We denote the \emph{edge bigraph} by $\rho\df(\{1\},\{2\},\{(1,2)\})$, the \emph{$d$-star bigraph} by
$K_{1,d}\df(\{0\},[d],\{(0,i)\mid i\in[d]\})$ and the \emph{dual $d$-star bigraph} by
$K_{d,1}\df([d],\{0\},\{(i,0)\mid i\in[d]\})$.

\subsection{Flags}
\label{subsec:flags}

It will be convenient to also work with partially labeled bigraphs and for this purpose we will borrow some
terminology from the theory of flag algebras~\cite{Raz07}.

More specifically, we work in the theory $T_{\operatorname{Graph}}^2$ of
graphs augmented with a 2-coloring of its vertices. Thus, a \emph{flag} is a
partially labeled bigraph, that is, a pair $F=(G,\theta)$, where $G$
is a bigraph and $\theta\injection{[k]}{V(G)}$ is an injection for some
$k\in\NN$. We use the notation $\lvert F\rvert\df G$ for the \emph{underlying bigraph}
of $F$ and the notation $\theta_F\df\theta$ for the \emph{labeling} of $F$.
We will often abuse notation and write
$F=(G,(\theta(1),\theta(2),\ldots,\theta(k)))$, listing the values of
$\theta$. In fact, we will abuse the notation even more and write $F=(G,U)$
for some set $U\subseteq V(G)$ to be understood as $F=(G,\theta)$ for some
$\theta\injection{[\lvert U\rvert]}{V(G)}$ with $\im(\theta)=U$, whenever the
exact ordering is either clear from the context or unimportant.

An \emph{isomorphism} between flags $F_1=(G_1,\theta_1)$ and $F_2=(G_2,\theta_2)$ is an isomorphism $f$
between $G_1$ and $G_2$ that preserves the partial labeling in the sense that $f\comp\theta_1=\theta_2$; when
such an isomorphism exists, we say that $F_1$ and $F_2$ are \emph{isomorphic} and denote it by $F_1\cong F_2$.

If $F_1=(G_1,\theta_1)$ and $F_2=(G_2,\theta_2)$ are flags such that $\theta_2\comp\theta_1^{-1}$ is an
isomorphism between $G_1\rest_{\im(\theta_1)}$ and $G_2\rest_{\im(\theta_2)}$ (that is, in the terminology of
flag algebras, these flags are of the same type), we let $F_1\sqcup F_2$ be the flag obtained from the
disjoint union of $F_1$ and $F_2$ by identifying vertices with the same label\footnote{We avoid using $F_1F_2$
  here to not conflict with the product as defined in flag algebras.}. For $i\in[2]$, we let
$e_i\df(\rho,i)$. For a bigraph $G$, we let $G^L\df(G,V_1(G))$ be the flag in which all left vertices of $G$
are labeled.

\subsection{Bigraphons}

Given probability spaces $\Omega=(X,\mu)$ and $\Lambda=(Y,\nu)$, a
\emph{bigraphon} over $\Omega$ and $\Lambda$ is a bounded measurable function
$W\function{X\times Y}{\RR_+}$, where $X\times Y$ is equipped with the
product $\sigma$-algebra and the product measure $\mu\otimes\nu$; we will
denote bigraphons by $W\function{\Omega\times\Lambda}{\RR_+}$.

When taking integrals, our functions will always be bounded and hence Fubini's Theorem will apply and we will
be omitting references to it. If $V$ is a set, we let $\Omega^V=(X^V,\mu^V)$ be the product probability space
of $\lvert V\rvert$ copies of $\Omega$; we will usually abuse notation and denote $\mu^V$ simply by
$\mu$. Given $x\in X^V$ and $S\subseteq V$, we let $x_S\in X^S$ be the projection of $x$ to the coordinates in
$S$.

For a bigraph $G$ and a bigraphon $W\function{\Omega\times\Lambda}{\RR_+}$,
we let $t(G,W)\in \RR_+$ be given by~\eqref{eq:tGW}. More generally, for a
flag $F=(G,\theta)$ and a bigraphon $W\function{\Omega\times\Lambda}{\RR_+}$,
we let the function
$t(F,W)\function{\Omega^{V_1(G)\cap\im(\theta)}\times\Lambda^{V_2(G)\cap\im(\theta)}}{\RR_+}$
be given by
\begin{align*}
  t(F,W)(x,y)
  & \df
  \int_{X^{V_1(G)\setminus\im(\theta)}\times Y^{V_2(G)\setminus\im(\theta)}}
  \prod_{(v,w)\in E(G)} W(x''_v,y''_w)
  \ d(\mu\otimes\nu)(x',y'),
\end{align*}
where
\begin{align*}
  x''_v
  & \df
  \begin{dcases*}
    x_v, & if $v\in V_1(G)\cap\im(\theta)$,\\
    x'_v, & if $v\in V_1(G)\setminus\im(\theta)$;
  \end{dcases*}
  &
  y''_w
  & \df
  \begin{dcases*}
    y_w, & if $w\in V_2(G)\cap\im(\theta)$,\\
    y'_w, & if $w\in V_2(G)\setminus\im(\theta)$.
  \end{dcases*}
\end{align*}
When $V_1(G)\cap\im(\theta) = \varnothing$, we will simplify the notation to
$t(F,W)(y)$, and likewise for $V_2(G)\cap\im(\theta) = \varnothing$.

A bigraphon $W\function{\Omega\times\Lambda}{\RR_+}$ is called \emph{left-regular} if it satisfies
\begin{align*}
  t(e_1,W)(x) & = t(\rho,W)
\end{align*}
for almost every $x\in X$. Dually, it is called \emph{right-regular} if
\begin{align*}
  t(e_2,W)(y) & = t(\rho,W)
\end{align*}
for almost every $y\in Y$. Finally, it is called \emph{biregular} if it is both left regular and right regular.

As mentioned in the introduction, a \emph{Sidorenko bigraph} $G$ is a bigraph such that $t(G,W)\geq
t(\rho,W)^{e(G)}$ for every bigraphon $W$. While studying Sidorenko bigraphs, Sidorenko considered a stronger
inequality~\cite[Equation~(2)]{Sid91} that yields the class of strong Sidorenko bigraphs defined below.

\begin{definition}\label{def:strongSidorenko}
  A bigraph $G$ is a \emph{strong Sidorenko bigraph}\footnote{In fact, Sidorenko's condition
    in~\cite[Equation~(2)]{Sid91} also involves global weight functions $f$ and $g$ and allows for arbitrary
    measure spaces (that are not necessarily probability spaces), but it is not hard to see that the condition
    stated here is equivalent (see Appendix~\ref{sec:strongSid}).} if for every bigraphon
  $W\function{\Omega\times\Lambda}{\RR_+}$ and all sequences $f=(f_v)_{v\in V_1(G)}$ and $g=(g_w)_{w\in
    V_2(G)}$ of bounded measurable functions $f_v\function{\Omega}{\RR_+}$, $g_w\function{\Lambda}{\RR_+}$, we
  have
  \begin{align*}
    t(G;f,g;W) & \geq t\left(\rho; \prod_{v\in V_1(G)} f_v^{1/e(G)}, \prod_{w\in V_2(G)} g_w^{1/e(G)}; W\right)^{e(G)},
  \end{align*}
  where
  \begin{align*}
    t(G;f,g;W)
    & \df
    \int_{X^{V_1(G)}\times Y^{V_2(G)}}
    \prod_{v\in V_1(G)} f_v(x_v)\cdot
    \prod_{w\in V_2(G)} g_w(y_w)\cdot
    \prod_{(v,w)\in E(G)} W(x_v,y_w)
    \ d(\mu\otimes\nu)(x,y)
  \end{align*}
  and
  \begin{multline*}
    t\left(\rho; \prod_{v\in V_1(G)} f_v^{1/e(G)}, \prod_{w\in V_2(G)} g_w^{1/e(G)}; W\right)
    \\
    \df
    \int_{X\times Y}
    \prod_{v\in V_1(G)} f_v(x)^{1/e(G)}\cdot
    \prod_{w\in V_2(G)} g_w(y)^{1/e(G)}\cdot
    W(x,y)
    \ d(\mu\otimes\nu)(x,y).
  \end{multline*}
\end{definition}

\begin{remark}\label{rmk:strongSid->Sid}
  By simply taking all functions $f_v$ and $g_w$ to be constant equal to $1$, we deduce that strong
  Sidorenko bigraphs are Sidorenko bigraphs. However, it is not hard to see that taking $W$ and
  $g_w$ to be constant equal to $1$, we retrieve an instance of \Holder's inequality for the $f_v$
  functions that only holds if $e(G)\geq v_1(G)$. Other than examples that satisfy $e(G) <
  \min\{v_1(G),v_2(G)\}$ (or are indirectly obtained from such examples), it is not known whether
  the strong Sidorenko property is strictly stronger than the Sidorenko property.
\end{remark}

We say that a bigraph $G_1$ \emph{weakly dominates} $G_2$ if
\begin{align*}
  \frac{t(G_1,W)}{t(\rho,W)^{e(G_1)}} & \geq \frac{t(G_2,W)}{t(\rho,W)^{e(G_2)}}
\end{align*}
for every biregular non-zero bigraphon $W$.

An \emph{induced-Sidorenko} bigraph $G$ is a bigraph that weakly dominates all of its induced subgraphs.

\subsection{Cut-percolation}
\label{subsec:cutperc}

We start by recalling the definitions of~\cite[\S 3]{CL17} pertaining cut-percolation.

A \emph{cut-involution} of a bigraph $G$ is an automorphism $\phi\in\Aut(G)$ that satisfies:
\begin{enumerate}
\item $\phi$ is an involution, i.e., $\phi = \phi^{-1}$.
\item The set $\Fix(\phi)$ of points that are fixed by $\phi$ is a vertex-cut in the bigraph $G$, i.e., $G -
  \Fix(G)$ is disconnected.
\end{enumerate}
(If $G$ is already disconnected, then the empty set is declared to be a vertex-cut.)

The subgroup of $\Aut(G)$ generated by cut-involutions of $G$ is called the \emph{cut-involution group of $G$}.

A \emph{fold}\footnote{Let us remark that in~\cite[\S 3]{CL17}, Conlon--Lee use the same name
  ``cut-involution'' for folds, leaving the choice of the set $L$ implicit.} of $G$ is a pair $(\phi,L)$,
where $\phi$ is a cut-involution of $G$ and $L\subseteq V(G)$ is such that
\begin{enumerate}
\item $G\rest_L$ is a union of connected components of $G - \Fix(\phi)$.
\item $(L,\Fix(\phi),\phi(L))$ is a partition of $V(G)$.
\end{enumerate}
The set $L$ is called the \emph{left side} of the fold $(\phi,L)$ (but note that it is not necessarily
contained in $V_1(G)$).

\begin{remark}\label{rmk:cutinv}
  Not every cut-involution can be completed to a fold. A simple counter-example is the bigraph $G$ of
  Figure~\ref{fig:cutinv} given by
  \begin{align*}
    V_1(G) & \df \{0,2,4\}, &
    V_2(G) & \df \{1,3,5,6\}, &
    E(G) & \df \{(0,1),(0,3),(0,5),(0,6),(2,1),(2,3),(4,1),(4,3)\}
  \end{align*}
  and the cut-involution $\phi$ that maps $(1,2,5)$ to $(3,4,6)$ and fixes $0$. In fact, it is straightforward
  to check that a necessary and sufficient condition for the existence of some $L$ so that $(\phi,L)$ is a
  fold is that no connected component of $G - \Fix(\phi)$ is fixed by $\phi$ as a set.
\end{remark}

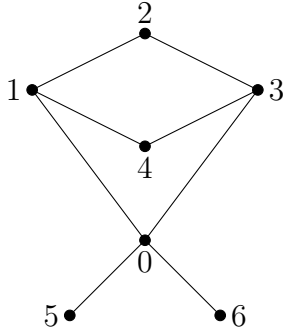
\begin{figure}[htbp]
  \begin{center}
    \begingroup

\def\cycleh{2}
\def\cyclesteph{0.75}
\def\cyclew{1.5}
\def\lowh{-1}
\def\loww{1}
\def\ptsize{2pt}

\begin{tikzpicture}
  \coordinate (P0) at (0,0);
  \coordinate (P1) at (-\cyclew,\cycleh);
  \coordinate (P2) at ($(0,\cycleh) + (0,\cyclesteph)$);
  \coordinate (P3) at (\cyclew,\cycleh);
  \coordinate (P4) at ($(0,\cycleh) + (0,-\cyclesteph)$);
  \coordinate (P5) at (-\loww,\lowh);
  \coordinate (P6) at (\loww,\lowh);

  \foreach \i in {0,...,6}{
    \filldraw (P\i) circle (\ptsize);
  }

  \node[below] at (P0) {$0$};
  \node[left] at (P1) {$1$};
  \node[above] at (P2) {$2$};
  \node[right] at (P3) {$3$};
  \node[below] at (P4) {$4$};
  \node[left] at (P5) {$5$};
  \node[right] at (P6) {$6$};

  \draw (P1) -- (P2) -- (P3) -- (P4) -- cycle;
  \draw (P1) -- (P0) -- (P3);
  \draw (P5) -- (P0) -- (P6);
\end{tikzpicture}

\caption{Example of Remark~\ref{rmk:cutinv} of a bigraph with a cut-involution that does not yield a fold.}
\label{fig:cutinv}

\endgroup

  \end{center}
\end{figure}

Given a fold $(\phi,L)$, the \emph{left-folding} and \emph{right-folding} maps are the maps
$\phi_L,\phi_L^*\function{V(G)}{V(G)}$, respectively, defined by
\begin{align*}
  \phi_L(v)
  & \df
  \begin{dcases*}
    \phi(v), & if $v\notin L$;\\
    v, & otherwise
  \end{dcases*}
  &
  \phi_L^*(v)
  & \df
  \begin{dcases*}
    \phi(v), & if $v\in L$;\\
    v, & otherwise.
  \end{dcases*}
\end{align*}
Note that these are endomorphisms of $G$.

A \emph{cut-percolating sequence} of a bigraph is a sequence of sets $E_0,E_1,\ldots,E_m\subseteq E(G)$ such
that $\lvert E_0\rvert = 1$, $E_m = E(G)$ and for every $i\in [m]$, there exists a fold $(\phi_i,L_i)$ of $G$
such that $E_i = (\phi_i)_{L_i}^{-1}(E_{i-1})$. In this definition, when want to make explicit the folds used
in the cut-percolating sequence, we say that it is a cut-percolating sequence with respect to $\Phi =
((\phi_1,L_1),\ldots,(\phi_m,L_m))$.

A bigraph is called \emph{cut-percolating} if it has a cut-percolating sequence. In fact, if $S$ is a set of
folds of $G$, we say that $G$ is \emph{cut-percolating under $S$} if it has a cut-percolating sequence with
respect to a sequence of folds in $S$.

The left-sided analogue of the above is defined in terms of left vertices.
\begin{definition}
  A \emph{left-cut-percolating sequence} of a bigraph is a sequence of sets $U_0,U_1,\ldots,U_m\subseteq
  V_1(G)$ such that $\lvert U_0\rvert = 1$, $U_m = V_1(G)$ and for every $i\in [m]$, there exists a fold
  $(\phi,L)$ of $G$ such that $U_i = \phi_L^{-1}(U_{i-1})$. Again, we say that this sequence is a
  left-cut-percolating sequence with respect to $\Phi=((\phi_1,L_1),\ldots,(\phi_m,L_m))$ when we want to make
  the sequence of folds explicit.

  A bigraph is called \emph{left-cut-percolating} if it has a left-cut-percolating sequence. For a set of
  folds $S$ of $G$, we say that $G$ \emph{left-cut-percolating under $S$} if it has a left-cut-percolating
  sequence with respect to a sequence of folds in $S$.
\end{definition}

\begin{remark}\label{rmk:transitivity}
  The cut-percolating sequence of a cut-percolating bigraph $G$ shows that the orbit of the single edge in
  $E_0$ under the action of the cut-involution group of $G$ is $E(G)$, so $G$ is edge-transitive under its
  cut-involution group action. By the same token, a left-cut-percolating bigraph is necessarily
  left-vertex-transitive under its cut-involution group action.

  In fact, both these statements trivially remain true replacing the cut-involution group of $G$ with its
  subgroup generated by the cut-involutions that appear in the sequence of folds used by the cut-percolating
  sequence.
\end{remark}

\begin{remark}\label{rmk:cutperc->leftcutperc}
  It is easy to see that every cut-percolating bigraph without isolated vertices is left-cut-percolating by
  simply tracking down the left endpoints of the edges in a cut-percolating sequence.
\end{remark}

\subsection{Reflection bigraphs}
\label{subsec:reflection}

Again, we start by recalling the definitions of~\cite[\S 4]{CL17} pertaining reflection bigraphs.

Let $R\subseteq\GL_n(\RR)$ be a finite reflection group (i.e., a finite group generated by reflections) and
let $T\subseteq R$ be the set of reflections in $R$. The set $\Phi$ of unit vectors that are orthogonal to
some hyperplane that is fixed by an element of $T$ is called \emph{root system of $R$} and its elements are
called \emph{roots}. Given further an ordered basis $\cU = (u_1,\ldots,u_n)$ of $\RR^n$, a root
$\alpha\in\Phi$ is called \emph{positive} (with respect to $\cU$) if it can be written as $\alpha =
\sum_{i=1}^n c_i u_i$ with $c_{i_0} > 0$, where $i_0\in[n]$ is the first index for which $c_i$ is non-zero;
otherwise, the root is called \emph{negative}. The set $\Phi$ is then partitioned into the sets $\Phi^+$ and
$\Phi^-$ of positive and negative roots, respectively.

For $\alpha\in\Phi$, let $H_\alpha$ be the hyperplane (through the origin) orthogonal to $\alpha$,
let $s_\alpha\in T$ be the reflection on $H_\alpha$ and let
\begin{align*}
  D_\alpha^+ & \df \{x\in\RR^n \mid \langle x,\alpha\rangle > 0\}; &
  D_\alpha^- & \df \{x\in\RR^n \mid \langle x,\alpha\rangle < 0\}.
\end{align*}
Dually, for $t\in T$, we let $\alpha_t\in\Phi^+$ be the unique positive root such that $s_{\alpha_t} = t$. We
also use the shorthands $H_t\df H_{\alpha_t}$, $D_t^+\df D_{\alpha_t}^+$ and $D_t^-\df D_{\alpha_t}^-$.

A set $\Delta\subseteq\Phi^+$ is a \emph{simple system} if every $\alpha\in\Phi^+$ can be written as a conic
combination (i.e., a linear combination with non-negative coefficients) of elements of $\Delta$ and $\Delta$
is minimal with this property. It is known (see~\cite[\S 1.3 and~1.5]{Hum90}) that $\Delta$ is unique with
respect to $\cU$, its elements are linearly independent and $S_\Delta\df\{s_\alpha \mid \alpha\in\Delta\}$
generates $R$. The elements of $\Delta$ are called \emph{simple roots} and the elements of $S_\Delta$ are
called \emph{simple reflections}.

For $I\subseteq S_\Delta$, let
\begin{align*}
  C(I) & \df \left(\bigcap_{s\in I} H_s\right)\cap\left(\bigcap_{s\in S_\Delta\setminus I} D_s^+\right).
\end{align*}

Given $S_1,S_2\subseteq S_\Delta$, the \emph{$(S_1,S_2;S_\Delta)$-reflection bigraph}\footnote{In~\cite{CL17},
  the group $R$ is also included in this notation, but since it can be retrieved from $S_\Delta$, we drop it from the
  notation here.} is the bigraph $G$ defined by
\begin{align*}
  V_1(G) & \df R/R_1 = \{rR_1 \mid r\in R\},\\
  V_2(G) & \df R/R_2 = \{rR_2 \mid r\in R\},\\
  E(G) & \df \{(rR_1, rR_2) \mid r\in R\},
\end{align*}
where $R_i$ is the subgroup of $R$ generated by $S_i$. In words, the left and right vertices are the left
cosets of $R_1$ and $R_2$, respectively and an edge is present whenever these left cosets intersect.

In~\cite[Corollary~4.9]{CL17}, Conlon--Lee showed that every reflection $t\in T$ naturally defines a fold
$(\phi(t;S_1,S_2;S_\Delta),L(t;S_1,S_2;S_\Delta))$ of the $(S_1,S_2;S_\Delta)$-reflection bigraph $G$ given by
\begin{gather*}
  \phi(t;S_1,S_2;S_\Delta)(r R_i) \df tr R_i, \qquad (r\in R, i\in[2])\\
  L(t;S_1,S_2;S_\Delta) \df \{rR_i \mid r(C(S_i))\subseteq D_{\alpha_t}^+, i\in [2]\}.
\end{gather*}
We will call such folds \emph{reflection folds} of the reflection bigraph $G$.

The left-sided analogue of reflection bigraphs uses multiple subgroups for the right vertices.
\begin{definition}
  Given $S_0,S_1,\ldots,S_k\subseteq S_\Delta$, the $(S_0;S_1,\ldots,S_k;S_\Delta)$-left-reflection bigraph is
  the amalgamation of the $(S_0,S_i;S_\Delta)$-reflection bigraphs $G_i$ ($i\in[k]$) over the left side, that is, it
  is obtained from their disjoint union by identifying the corresponding left vertices. Formally, it is the
  bigraph $G$ defined by
  \begin{align*}
    V_1(G) & \df R/R_0 = \{rR_0 \mid r\in R\},\\
    V_2(G) & \df \bigsqcup_{i=1}^k R/R_i \df \{(rR_i,i) \mid r\in R, i\in [k]\},\\
    E(G) & \df \{(rR_0, (rR_i,i)) \mid r\in R, i\in[k]\}.
  \end{align*}
  Equivalently, in flag language, we have $G\df \lvert\bigsqcup_{i=1}^k G_i^L\rvert$.

  For each $t\in T$, the \emph{reflection fold}
  $(\phi(t;S_0;S_1,\ldots,S_k;S_\Delta),L(t;S_0;S_1,\ldots,S_k;S_\Delta))$ of $G$ is the amalgamation of the
  reflection folds $(\phi(t;S_0,S_i;S_\Delta),L(t;S_0,S_i;S_\Delta))$ of the $G_i$; formally, we define
  \begin{align*}
    \phi(t;S_0;S_1,\ldots,S_k;S_\Delta)(r R_0) & \df tr R_0,\\
    \phi(t;S_0;S_1,\ldots,S_k;S_\Delta)((r R_i,i)) & \df (tr R_i,i),\\
    L(t;S_0;S_1,\ldots,S_k;S_\Delta)
    & \df
    \{rR_0 \mid r(C(S_0))\subseteq D_{\alpha_t}^+\}\cup\bigcup_{i\in[k]}
    \{(rR_i,i) \mid r(C(S_i))\subseteq D_{\alpha_t}^+\}.
  \end{align*}
  
  The \emph{natural coloring} of the $(S_0;S_1,\ldots,S_k;S_{\Delta})$-left-reflection-bigraph $G$ is the
  function $c\function{E(G)}{[k]}$ given by
  \begin{align*}
    c(rR_0, (rR_i,i)) & \df i.
  \end{align*}
\end{definition}

\subsection{Colored bigraphs and left-weakly \Holder\ bigraphs}

A \emph{colored bigraph} is a pair $H=(G,c)$, where $G$ is a bigraph and $c\function{E(G)}{C}$ is a function
called \emph{coloring}. We use the shorthand notations $c_H\df c$, $C_H\df C$ and $G(H)\df G$. We also abuse
notation of~\eqref{eq:bigraphshorthand} by applying it directly to $H$ (e.g., $E(H)\df E(G(H))$). For each
$i\in C$, we let $e_i(H)\df\lvert c^{-1}(i)\rvert$ be the number of edges that have color $i$ and for each
$v\in V(G)$, we let
\begin{align*}
  d_{H,i}(v) & \df \lvert\{w\in V(G) \mid (v,w)\in c^{-1}(i)\lor (w,v)\in c^{-1}(i)\}\rvert
\end{align*}
denote its \emph{$i$-degree in $H$}. We say that $H$ is \emph{left-color-regular} if $d_{H,i}(v) = d_{H,i}(w)$
for every $i\in C_H$ and every $v,w\in V_1(H)$ (which is equivalent to saying that $d_{H,i}(v) =
e_i(H)/v_1(H)$ for every $v\in V_1(H)$).

For $U\subseteq V(H)$, we let $H\rest_U\df(G(H)\rest_U,c_H\rest_{E(G(H)\rest_U)})$ be the \emph{colored
  bigraph induced by $U$}. For a set of colors $C'\subseteq C_H$, we let $H_{C'}=(G_{C'},c_{H_{C'}})$ be the
colored bigraph obtained from $H$ by keeping only edges that have color in $C'$, that is, we have $G_{C'}\df
G(H) - c_H^{-1}(C_H\setminus C')$ and $c_{H_{C'}}\df c_H\rest_{E(G_{C'})}$. We say that $H$ is
\emph{right-uniform} if the coloring $c$ factors as $c(v,w) = f(w)$ for some function
$f\function{V_2(G)}{C}$. An \emph{isomorphism} between colored bigraphs $H_1=(G_1,c_1)$ and $H_2=(G_2,c_2)$ is an
isomorphism $f$ between $G_1$ and $G_2$ that preserves the coloring in the sense that $c_1(v,w) =
c_2(f(v),f(w))$ for every $(v,w)\in E(G_1)$.  An \emph{automorphism} of $H$ is an isomorphism of $H$ to itself
and we denote the set of automorphisms of $H$ by $\Aut(H)$. A colored bigraph $H$ is called
\emph{color-edge-transitive} if for every $(v_1,w_1),(v_2,w_2)\in E(H)$ with $c_H(v_1,w_1)=c_H(v_2,w_2)$,
there exists an automorphism $\sigma\in\Aut(H)$ such that $\phi(v_1) = v_2$ and $\sigma(w_1) = w_2$.

Given a colored bigraph $H$ and a sequence of bigraphons $W = (W_i)_{i\in C_H}$ all over the same spaces
$\Omega=(X,\mu)$ and $\Lambda=(Y,\nu)$, we define
\begin{align*}
  t(H,W) & \df \int_{X^{V_1(H)}\times Y^{V_2(H)}} \prod_{(v,w)\in E(H)} W_{c_H(v,w)}(x_v,y_w)\ d(\mu\otimes\nu)(x,y).
\end{align*}

\begin{remark}\label{rmk:Autcolor}
  Note that if $\sigma\in\Aut(G)$, we have $t((G,c),W) = t((G,c\comp\sigma),W)$ (even if $\sigma$ is not in
  $\Aut((G,c))$) by simply renaming the integration variables.
\end{remark}

The notions of flags $F = (H,\theta)$ over colored bigraphs $H=(G,c)$ and the corresponding function $t(F,W)$
are defined analogously to Section~\ref{subsec:flags}.

As mentioned in the introduction, a \emph{weakly norming bigraph}, or a \emph{weakly \Holder\ bigraph} is a
bigraph $G$ such that $W\mapsto t(G,\lvert W\rvert)^{1/e(G)}$ defines a norm in the space of bounded functions
$X\times Y\to\RR$ up to a.e.\ equivalence. In~\cite[Theorem~2.10(ii)]{Hat10}, Hatami showed that a bigraph $G$
is weakly norming if and only if for every coloring $c\function{E(G)}{C}$ and for every sequence of bigraphons
$W = (W_i)_{i\in C}$, we have the following \Holder-like inequality
\begin{align*}
  t((G,c),W) & \leq \prod_{(v,w)\in E(G)} t(G,W_{c(v,w)})^{1/e(G)}.
\end{align*}
Furthermore, in~\cite{DGHRR18}, weakly norming bigraphs are further characterized as precisely those
bigraphs that have the step Sidorenko property studied in~\cite{KMPW19} and implicitly
in~\cite[\S14.2]{Lov12}.

The left-sided analogue of this is a bit more technical and is defined for colored bigraphs instead.
\begin{definition}
  A \emph{left-coloring} of a bigraph $G$ is a function $\ell\function{V_1(G)}{C}$.

  Given both a coloring $c\function{E(G)}{C'}$ and a left-coloring $\ell\function{V_1(G)}{C}$ of the same
  bigraph $G$, we define the coloring $\ell\otimes c\function{E(G)}{C\times C'}$ by
  \begin{align*}
    (\ell\otimes c)(v,w) & = (\ell(v),c(v,w)).
  \end{align*}

  A colored bigraph $H=(G,c)$ is called \emph{left-weakly \Holder} if for every left-coloring
  $\ell\function{V_1(G)}{C}$ of $G(H)$ and every sequence of bigraphons $W = (W_i)_{i\in C\times C_H}$, we
  have
  \begin{align*}
    t((G,\ell\otimes c),W) & \leq \prod_{v\in V_1(G)} t((G,\ell(v)\otimes c),W)^{1/v_1(G)},
  \end{align*}
  where $\ell(v)$ on the right-hand side is interpreted as the left-coloring $V_1(G)\to C$ that is constant
  equal to $\ell(v)$.
\end{definition}

\begin{remark}\label{rmk:leftweakHoldersubcolor}
  If $C'\subseteq C_H$ for a left-weakly \Holder\ bigraph $H$, then $H_{C'}$ is also left-weakly \Holder. This
  can be seen by setting $W_i\equiv 1$ for every $i\in C\times (C_H\setminus C')$ in the left-weak
  \Holder\ property of $H$.
\end{remark}

\begin{remark}\label{rmk:weakHolder->indSid}
  By~\cite[Theorem~2.14]{Hat10}, every weakly norming bigraph $G$ satisfies $t(G,W)^{1/e(G)}\geq
  t(H,W)^{1/e(H)}$ for any (not necessarily induced) subgraph $H$ of $G$, which in particular implies that $G$
  is induced-Sidorenko, since (assuming $G$ is non-empty) for an induced subgraph $H$ of $G$, we have
  \begin{align*}
    t(G,W)
    & =
    t(G,W)^{e(H)/e(G)}\cdot t(G,W)^{1-e(H)/e(G)}
    \geq
    t(H,W)\cdot t(\rho,W)^{e(G)-e(H)}.
  \end{align*}
\end{remark}

\begin{remark}\label{rmk:weakHolder->leftweakHolder}
  Since by~\cite[Theorem~2.10(ii)]{Hat10} every weakly norming bigraph without isolated vertices is biregular,
  it follows that if $G$ is weakly norming, without any isolated vertices and $c$ is a constant coloring of
  $G$, then $(G,c)$ is left-weakly \Holder, as for every left-coloring $\ell$, we have
  \begin{align*}
    t((G,\ell\otimes c),W)
    & \leq
    \prod_{(v,w)\in E(G)} t(G,W_{(\ell(v),c_0)})^{1/e(G)}
    =
    \prod_{v\in V_1(G)} t(G,W_{(\ell(v),c_0)})^{d_G(v)/e(G)}
    \\
    & =
    \prod_{v\in V_1(G)} t(G,W_{(\ell(v),c_0)})^{1/v_1(G)},
  \end{align*}
  where $c_0$ is the unique element in $\im(c)$.
\end{remark}

\begin{remark}\label{rmk:leftweakHolder->Sid}
  An argument analogous to that of Remark~\ref{rmk:weakHolder->leftweakHolder} can be used to show that the
  underlying bigraph of left-weakly \Holder\ bigraphs $H=(G,c)$ is Sidorenko: let $v_0\in V_1(G)$ be a
  non-isolated vertex, let $\ell\function{V_1(G)}{\{0,1\}}$ be given by $\ell(v)\df \One[v=v_0]$ and for a
  bigraphon $W$, considering the sequence $W'=(W'_{t,i})_{t\in\{0,1\},i\in C_H}$ given by $W'_{0,i}\df 1$ and
  $W'_{1,i}\df W$, we get
  \begin{align*}
    t(\rho,W)^{e(G)}
    & \leq
    t(K_{1,d_G(v_0)},W)^{e(G)/d_G(v_0)}
    =
    t((G,\ell\otimes c),W')^{e(G)/d_G(v_0)}
    \\
    & \leq
    t((G,\ell(v_0)\otimes c),W')^{e(G)/(v_1(G) d_G(v_0))}
    \prod_{v\in V_1(G)\setminus\{v_0\}}
    t((G,\ell(v)\otimes c),W')^{e(G)/(v_1(G) d_G(v_0))}
    \\
    & =
    t(G,W)^{e(G)/(v_1(G) d_G(v_0))}
    =
    t(G,W),
  \end{align*}
  where the first inequality follows from Jensen's Inequality and the last equality follows from
  Lemma~\ref{lem:obstacle} in Section~\ref{sec:indSid} below (as the derivation above holds for every
  bigraphon $W$). In fact, a similar argument in Lemma~\ref{lem:leftweakHolder->colorSid} will show that every
  left-weakly \Holder\ bigraph without isolated vertices is necessarily left-color-regular.
\end{remark}

\section{Main results}
\label{sec:mainresults}

In this section we state our main results.

\begin{theorem}\label{thm:leftrefl->leftcutperc}
  Every left-reflection bigraph is left-cut-percolating under reflection folds.
\end{theorem}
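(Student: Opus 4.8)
The plan is to mimic, on the left side, the proof that Conlon--Lee give for ordinary reflection bigraphs being cut-percolating (\cite[Corollary~4.9 and Theorem~4.12]{CL17}), but tracking left vertices instead of edges. Fix an $(S_0;S_1,\ldots,S_k;S_\Delta)$-left-reflection bigraph $G$, with reflection group $R$, simple system $\Delta$, and reflection set $T$. Its left vertex set is $V_1(G) = R/R_0$, and the reflection fold associated to $t \in T$ acts on $V_1(G)$ by $\phi_{L}(rR_0) = trR_0$ when $rR_0 \notin L$ and fixes it otherwise, where membership of $rR_0$ in the left side $L = L(t;S_0;\ldots;S_\Delta)$ is governed by whether $r(C(S_0)) \subseteq D_{\alpha_t}^+$. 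The single distinguished starting left vertex should be the coset $R_0$ itself (the image of the identity chamber). I would then build the left-cut-percolating sequence $U_0 = \{R_0\} \subseteq U_1 \subseteq \cdots \subseteq U_m = R/R_0$ using the same sequence of reflection folds that Conlon--Lee use to reach all of $E(G)$, restricting attention to left endpoints.

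The key structural fact to invoke is the Conlon--Lee description of how the folding maps interact with the standard "gallery" combinatorics of the Coxeter complex: if $w \in R$ has length $\ell(w)$ with respect to $S_\Delta$, then for a suitable choice of reflection $t$ (namely one corresponding to an inversion of $w$, or equivalently a simple reflection on a reduced-word boundary), the left-folding map $\phi_L$ sends the coset $wR_0$ to a coset $w'R_0$ with $w'$ strictly shorter, while fixing all cosets whose chamber already lies on the positive side $D_{\alpha_t}^+$. Concretely, the first step I would carry out is: reduce the claim to showing that for every coset $wR_0 \in R/R_0$ there is a finite sequence of reflection folds whose composition, read backwards as preimages $\phi_L^{-1}$, pulls $R_0$ out to $wR_0$. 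This is exactly the statement that the orbit of $R_0$ under the monoid generated by the maps $\phi_L^{-1}$ (for reflection folds) is all of $R/R_0$. The second step is an induction on $\ell(w)$ (a minimal-length coset representative), using at each stage a reflection $t = s_\alpha$ with $\alpha$ simple such that $s_\alpha w R_0$ has smaller length and such that $wR_0 \notin L$, $s_\alpha wR_0 \in L$ — so that $\phi_L$ maps $wR_0 \mapsto s_\alpha w R_0$, hence $\phi_L^{-1}(\{s_\alpha w R_0, \text{previously obtained cosets}\}) \ni wR_0$.

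The main obstacle I anticipate is bookkeeping the side condition correctly: the fold's left side $L$ is not simply "one half of the chambers" but is defined vertex-by-vertex via $r(C(S_i)) \subseteq D_{\alpha_t}^+$, and one must check that (i) the chamber of $wR_0$ lands on the "$L$" side, (ii) the chamber of its shortened image lands on the complementary side, and (iii) no previously-accumulated coset in $U_{i-1}$ that we need to keep is inadvertently moved off its position by $\phi_L^{-1}$ — i.e., that $\phi_L^{-1}(U_{i-1}) \supseteq U_{i-1}$, or at least that the union $U_i = \phi_L^{-1}(U_{i-1})$ still contains everything obtained so far. Here I would lean on the monotonicity already implicit in Conlon--Lee's edge-percolation argument: since $G$ is the amalgamation $\lvert\bigsqcup_{i=1}^k G_i^L\rvert$ of the reflection bigraphs $G_i$ over their common left side, and each $G_i$ is cut-percolating under reflection folds by \cite{CL17}, the left-endpoint projection of $G_i$'s cut-percolating sequence is automatically a left-cut-percolating sequence for $G_i$, and all the $G_i$ share the same left vertex set $R/R_0$ with the same folds acting the same way on it. So the cleanest route is: take the cut-percolating sequence $E_0, \ldots, E_m$ of (say) $G_1$ under reflection folds, set $U_i$ to be the set of left endpoints of edges in $E_i$, and verify that $U_i = (\phi_i)_{L_i}^{-1}(U_{i-1})$ follows from $E_i = (\phi_i)_{L_i}^{-1}(E_{i-1})$ because the folding map on edges is compatible with the folding map on left vertices (an edge $(rR_0, (rR_i,i))$ has left endpoint $rR_0$, and $\phi_L$ of the edge has left endpoint $\phi_L(rR_0)$). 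The remaining check is that $\lvert U_0 \rvert = 1$ and $U_m = V_1(G)$, the former because $\lvert E_0\rvert = 1$ and the latter because $E_m = E(G_1)$ covers every left vertex (no isolated left vertices in a reflection bigraph). This reduces the whole theorem to the already-established edge-percolation result plus the trivial observation about compatibility of the two folding actions, which I expect to be genuinely short to write out.
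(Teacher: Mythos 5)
Your final plan coincides with the paper's proof at the level of strategy: take a cut-percolating sequence $E_0,\dots,E_m$ of $G_1$ under reflection folds, project to left endpoints to get a left-cut-percolating sequence of $G_1$ (this is Remark~\ref{rmk:cutperc->leftcutperc}), and then exploit the amalgamation structure $G=\lvert\bigsqcup_{j} G_j^L\rvert$ to transfer this to $G$. That much is correct and is what the paper does.

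However, there is a genuine gap where you call the remaining step ``a trivial observation about compatibility of the two folding actions'' plus checking $\lvert U_0\rvert=1$ and $U_m=V_1(G)$. A left-cut-percolating sequence of $G$ requires a sequence of \emph{folds of $G$}, not folds of $G_1$. The pairs $(\phi_i,L_i)$ you are working with are folds of $G_1$, and you never explicitly extend them. To do so one must amalgamate, for each reflection $t_i$, the folds $(\phi(t_i;S_0,S_j;S_\Delta),L(t_i;S_0,S_j;S_\Delta))$ of each $G_j$ into a single candidate pair $(\widehat\phi_i,\widehat L_i)$ on $G$, and then verify the fold axioms for $G$ itself. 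The nontrivial axiom is that $G\rest_{\widehat L_i}$ is a union of connected components of $G-\Fix(\widehat\phi_i)$: since $G$ has strictly more edges than any individual $G_j$, connected components can merge across the different $G_j$'s, so the conclusion is not automatic from the corresponding fact in each $G_j$. The paper isolates exactly this verification as Lemma~\ref{lem:amalgleftcutperc}, whose proof uses a path-segmentation argument to show each segment of a path in $G-\Fix(\widehat\phi_i)$ stays inside a single $G_j$ and hence inside the appropriate side. Without this lemma (or some equivalent argument), the step from ``$U_0,\dots,U_m$ is a left-cut-percolating sequence of $G_1$'' to ``$U_0,\dots,U_m$ is a left-cut-percolating sequence of $G$'' is not justified, even though the sets $U_i$ are the same. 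Supplying that verification would complete the argument; the reflection-group ``gallery'' machinery you sketched at the start is not needed once this is in place.
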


\begin{theorem}\label{thm:leftcutperc->indSid}
  Every left-cut-percolating bigraph is induced-Sidorenko.
\end{theorem}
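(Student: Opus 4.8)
The plan is to reduce to biregular bigraphons and then iterate a Cauchy--Schwarz ``fold inequality'' along the left-cut-percolating sequence. First, since rescaling $W\mapsto cW$ for $c>0$ multiplies $t(H,W)$ by $c^{e(H)}$ for every bigraph $H$ and preserves biregularity, the weak-domination inequality $t(G,W)\,t(\rho,W)^{e(G\rest_U)}\ge t(G\rest_U,W)\,t(\rho,W)^{e(G)}$ reduces to the case $t(\rho,W)=1$, where it reads simply $t(G\rest_U,W)\le t(G,W)$. So it suffices to fix a biregular $W$ with $t(\rho,W)=1$ and prove $t(G\rest_U,W)\le t(G,W)$ for every $U\subseteq V(G)$. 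Biregularity also disposes of several degenerate cases that will serve as base cases: isolated vertices of $G\rest_U$ contribute a factor $1$ and may be dropped; a degree-$1$ vertex of $G\rest_U$ may be deleted without changing the density, since $t(e_1,W)\equiv t(e_2,W)\equiv t(\rho,W)=1$; and when $U$ meets $V_1(G)$ in at most one vertex, $G\rest_U$ is a star $K_{1,d}$ (up to isolated vertices), whose density is $\int_X\bigl(\int_Y W\,d\nu\bigr)^d d\mu=1\le t(G,W)$, the last inequality being the Sidorenko property of $G$ on biregular bigraphons (which holds by Theorem~\ref{thm:leftcutperc->leftweakHolder} together with Remark~\ref{rmk:leftweakHolder->Sid}).

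The key tool is a fold inequality for induced subgraphs. Fix a fold $(\phi,L)$ of $G$ and put $\mathrm F\df\Fix(\phi)$. Since $\mathrm F$ is a vertex-cut separating $L$ from $\phi(L)$, for any $U\subseteq V(G)$ the bigraph $G\rest_U$ is the amalgamation of $G\rest_{U\cap(L\cup\mathrm F)}$ and $G\rest_{U\cap(\phi(L)\cup\mathrm F)}$ along $G\rest_{U\cap\mathrm F}$. Carrying out the integral defining $t(G\rest_U,W)$ first over all variables indexed by vertices outside $\mathrm F$, and then applying Cauchy--Schwarz in the $(U\cap\mathrm F)$-variables --- exploiting the amalgamation structure and that $\phi$ restricts to an isomorphism onto its image fixing $\mathrm F$ pointwise, so that the two ``halves'' agree as functions of the $\mathrm F$-variables after relabeling --- one obtains
\[
  t(G\rest_U,W)^{2}\ \le\ t(G\rest_{U^{+}},W)\cdot t(G\rest_{U^{-}},W),
\]
where $U^{+}\df (U\cap\mathrm F)\cup(U\cap L)\cup\phi(U\cap L)$ and $U^{-}\df (U\cap\mathrm F)\cup(U\cap\phi(L))\cup\phi(U\cap\phi(L))$ are the two ways of symmetrizing $U$ by reflecting one of its halves through $\phi$ and discarding the other; in particular these are again subsets of $V(G)$, so the right-hand side involves only genuine induced subgraphs of $G$. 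A short check gives $e(G\rest_{U^{+}})+e(G\rest_{U^{-}})=2\,e(G\rest_U)$ (so the inequality is homogeneous under the normalization), and $U^{+}=\phi_L^{-1}(U)$ and $U^{-}=U\cap\mathrm F$ whenever $U\subseteq L\cup\mathrm F$ --- so, for a fold coming from a left-cut-percolating sequence that has been WLOG normalized so that $U_{i-1}\subseteq L_i\cup\Fix(\phi_i)$, the ``$+$'' branch is precisely the next unfolding step of the sequence.

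Finally, one iterates the fold inequality along the left-cut-percolating sequence. When $\varnothing\ne U\cap V_1(G)\subsetneq V_1(G)$, left-vertex-transitivity of the cut-involution group (Remark~\ref{rmk:transitivity}) provides a fold $(\phi_i,L_i)$ of the sequence with $\phi_i(U)\ne U$; applying the fold inequality with it makes one of the branch sets grow strictly toward $V(G)$, and one recurses on that branch while bounding the other branch --- which is $t(G,W)$ itself, or a configuration with strictly fewer missing vertices, or a star --- by the inductive hypotheses, using $t(G,W)\ge1$ to absorb the resulting lower-order factors. The remaining case, in which $U$ misses only right vertices (necessarily a union of orbits of the cut-involution group), is reduced to the star base cases by feeding those right vertices back through the fold inequality. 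Altogether $t(G\rest_U,W)\le t(G,W)$, which is the desired weak domination.

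This argument is the one-sided, ``induced'' analogue of Conlon--Lee's proof that cut-percolating bigraphs are weakly norming, and the entire difficulty is concentrated in the last step: organizing the recursion so that every branch of the iterated Cauchy--Schwarz terminates at $V(G)$ or at a base case, without leaking uncontrolled ``$-$''-branch terms (the ``$-$'' branch discards the part of $U$ on the far side of each fold, so the monotonicity built into the cut-percolating sequence must be exploited carefully, in particular to steer the stubborn branches). It is exactly here --- and not in any single inequality --- that the full strength of the left-cut-percolating sequence is needed, rather than merely the left-vertex-transitivity it implies; this matches the remark in the introduction that obtaining induced-Sidorenko ``requires the full power of left-cut-percolation''. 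The rescaling reduction, the Cauchy--Schwarz step, and the biregularity simplifications are all routine.
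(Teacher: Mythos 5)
Your Cauchy--Schwarz ``fold inequality'' for induced subgraphs,
\[
  t(G\rest_U,W)^{2}\ \le\ t(G\rest_{U^{+}},W)\cdot t(G\rest_{U^{-}},W),
\]
is correct --- indeed it is exactly Lemma~\ref{lem:CStree} applied to the $\{0,1\}$-coloring $\One_{E(G\rest_U)}$ with $W'=(1,W)$, and the sets $U^\pm$ you wrote correspond to $\phi_L^{-1}(U)$ and $(\phi_L^*)^{-1}(U)$ --- and so are the reductions (biregularity, normalization, base cases). But the recursion you sketch in the last paragraph does not close, and this is a genuine gap, not a routine matter of bookkeeping.

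There are two concrete obstructions. First, the claim that for an arbitrary fold with $\phi(U)\ne U$, ``one of the branch sets grows strictly toward $V(G)$'' is simply false: $\lvert U^+\rvert-\lvert U\rvert=\lvert U\cap L\rvert-\lvert U\cap\phi(L)\rvert$ and $\lvert U^-\rvert-\lvert U\rvert$ is its negative, so when $U$ meets $L$ and $\phi(L)$ in the same number of points neither branch grows even if $\phi(U)\ne U$. Second, and more seriously, even if you iterate only the folds of the left-cut-percolating sequence (taking $v_0\in U$ WLOG), after a full pass the ``$+$''-most branch is $G\rest_{U'}$ with $V_1(G)\subseteq U'$ but in general $U'\cap V_2(G)\subsetneq V_2(G)$: the sequence is only guaranteed to spread a single \emph{left} vertex to all of $V_1(G)$, and says nothing about covering $V_2(G)$. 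This terminal $G\rest_{U'}$ is $G$ minus some right vertices of degree $\ge2$, which is not $G$, is not one of your star base cases, and is not a priori comparable to $G$ in density (proving that is exactly the induced-Sidorenko property you are trying to establish). Your suggestion to ``feed those right vertices back through the fold inequality'' has no reason to terminate, and the parenthetical ``necessarily a union of orbits'' is not true.

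The paper resolves precisely this with two ideas you are missing. One is to enlarge the class from induced subgraphs to $\{0,1,2\}$-valued \emph{$2$-threshold subgraphs} $G_f$ with $f^{-1}(2)\subseteq V_1(G)$ (Definition before Lemma~\ref{lem:2thresh}): this class is still closed under fold preimages (Remark~\ref{rmk:2thresh}), but now setting $f(v_0)=2$ forces every edge at $v_0$ to be present \emph{regardless of the value of $f$ on the right endpoint}, so after one pass of the left-cut-percolating folds the leftmost leaf has $f\equiv 2$ on $V_1(G)$ and therefore is literally $G$ --- the $V_2$-side never needs to be controlled. Upgrading an induced-subgraph $f$ (i.e.\ $f(v_0)=1$) to $f(v_0)=2$ costs a factor $t(\rho,W)^{\ell_H}$, which disappears under your normalization, so this is free. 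The other missing idea is the organization of the iteration: rather than a branching recursion with base cases, Lemma~\ref{lem:2thresh} is a single ``one-step'' inequality with exponent $2^{-m}$ on $t(G,W)$ and total exponent $1-2^{-m}$ on the remaining $2$-threshold terms, and the theorem iterates it as a geometric series, letting the exponent on $t(G,W)$ accumulate to $1$ while the stray terms vanish; the final exponent on $t(\rho,W)$ is then read off for free by the scaling Lemma~\ref{lem:obstacle}. You have correctly located where the difficulty lies, but the mechanism you propose to get past it does not work as stated.
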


\begin{restatable}{theorem}{leftcutperctoleftweakHolder}\label{thm:leftcutperc->leftweakHolder}
  Let $G$ be a left-cut-percolating bigraph under a set $S$ of folds of $G$. If $c\function{E(G)}{C}$ is a
  coloring of $G$ that is invariant under the subgroup of $\Aut(G)$ generated by $\{\phi \mid (\phi,L)\in
  S\}$, then $(G,c)$ is left-weakly \Holder.
\end{restatable}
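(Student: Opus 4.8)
The plan is to carry the Conlon--Lee proof that cut-percolation implies the weakly norming property over to the left-sided, colored setting. Fix the coloring $c\function{E(G)}{C}$, an arbitrary left-coloring $\ell\function{V_1(G)}{D}$, and a sequence of bigraphons $W=(W_i)_{i\in D\times C}$; write $F_{\ell'}\df t((G,\ell'\otimes c),W)$ for a left-coloring $\ell'$ of $G$ and, following the convention in the definition of left-weakly \Holder, also write $F_j\df t((G,j\otimes c),W)$ when a color $j$ is regarded as the constant left-coloring with that value. The goal is to prove $F_\ell\leq\prod_{v\in V_1(G)}F_{\ell(v)}^{1/v_1(G)}$. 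The argument has three ingredients: a single-fold Cauchy--Schwarz inequality; an iteration of it along the left-cut-percolating sequence; and a symmetrization that forces the exponents to be uniform.

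\textbf{Folding inequality.} I would first show that for every fold $(\phi,L)\in S$ and every left-coloring $\ell'$ of $G$,
\[
  F_{\ell'}\ \leq\ F_{\ell'\comp\phi_L}^{1/2}\cdot F_{\ell'\comp\phi_L^*}^{1/2}
\]
(note $\phi_L$ and $\phi_L^*$ preserve $V_1(G)$, so the right-hand side makes sense). To prove it, partition $V(G)=L\sqcup\Fix(\phi)\sqcup\phi(L)$. Since $\Fix(\phi)$ is a vertex-cut and $L,\phi(L)$ are unions of connected components of $G-\Fix(\phi)$, no edge of $G$ joins $L$ to $\phi(L)$; hence, grouping the integration variables of $F_{\ell'}$ according to this partition, after conditioning on the variables indexed by $\Fix(\phi)$ the integrand factors as a product of a function of the $L$-variables, a function of the $\phi(L)$-variables, and a function of the $\Fix(\phi)$-variables alone. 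Cauchy--Schwarz in the $\Fix(\phi)$-variables then bounds $F_{\ell'}$ by the geometric mean of the two ``doubled'' integrals obtained by duplicating the $L$-side (respectively the $\phi(L)$-side) and gluing the copy to itself along $\Fix(\phi)$. Because $\phi\in\Aut(G)$ restricts to an isomorphism $G\rest_{L\cup\Fix(\phi)}\to G\rest_{\phi(L)\cup\Fix(\phi)}$ that is the identity on $\Fix(\phi)$, each doubled graph is isomorphic to $G$; and because $c$ is $\phi$-invariant, the two copies of the inherited coloring agree and transport back to $(\ell'\comp\phi_L)\otimes c$ and $(\ell'\comp\phi_L^*)\otimes c$ on $G$, respectively, so the doubled integrals equal $F_{\ell'\comp\phi_L}$ and $F_{\ell'\comp\phi_L^*}$. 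The one genuinely fiddly point is verifying that the transported left-colorings are exactly $\ell'\comp\phi_L$ and $\ell'\comp\phi_L^*$ — here one has to keep track of left versus right vertices inside $L$ — and this is precisely where $\phi$-invariance of $c$ enters.

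\textbf{Iteration and symmetrization.} Note first that $F_{\ell'}=F_{\ell'\comp\gamma}$ for every $\gamma$ in the subgroup $\Gamma\leq\Aut(G)$ generated by $\{\phi\mid(\phi,L)\in S\}$: each generator preserves $c$, so $(\ell'\otimes c)\comp\phi=(\ell'\comp\phi)\otimes c$ and Remark~\ref{rmk:Autcolor} applies; moreover $\Gamma$ acts transitively on $V_1(G)$ by Remark~\ref{rmk:transitivity}. I would then run the Markov chain on left-colorings of $G$ that replaces the current $\ell'$ by $\ell'\comp\phi_L$ or by $\ell'\comp\phi_L^*$, each with probability $1/2$, for a uniformly chosen fold of $S$. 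Averaging the folding inequality over $S$ shows that $\log F$ is a submartingale along this chain, so $\log F_\ell\leq\EE[\log F_{\ell_k}]$ for every $k$. Constant left-colorings are absorbing states, and the defining relation $U_i=(\phi_i)_{L_i}^{-1}(U_{i-1})$ of a left-cut-percolating sequence $\{u_0\}=U_0,U_1,\ldots,U_m=V_1(G)$ has the dual consequence that the composition $(\phi_1)_{L_1}\comp\cdots\comp(\phi_m)_{L_m}$ maps all of $V_1(G)$ onto $\{u_0\}$; consequently, from any state the chain reaches a constant left-coloring within $m$ steps with positive probability, hence is absorbed almost surely. As there are only finitely many reachable left-colorings, bounded convergence yields $F_\ell\leq\prod_{v\in V_1(G)}F_{\ell(v)}^{w_v}$ for some weights $w_v\geq 0$ with $\sum_v w_v=1$ that depend only on $G$ and $S$ (they are the absorption probabilities of the induced walk on $V_1(G)$, so independent of $\ell$ and $W$). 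Finally, applying this inequality with $\ell\comp\gamma$ in place of $\ell$ for each $\gamma\in\Gamma$, using $F_{\ell\comp\gamma}=F_\ell$, and taking the geometric mean over $\Gamma$, the weight vector $(w_v)_v$ is replaced by its $\Gamma$-average; being a $\Gamma$-invariant probability distribution on $V_1(G)$, it is uniform by transitivity, which gives $F_\ell\leq\prod_{v\in V_1(G)}F_{\ell(v)}^{1/v_1(G)}$. The positivity needed to take logarithms is handled by first assuming the $W_i$ are bounded below by a positive constant and then letting it tend to $0$, and degenerate situations (such as isolated vertices) are routine.

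\textbf{Main obstacle.} Conceptually the Cauchy--Schwarz step is the familiar Conlon--Lee move; the new work is the coloring bookkeeping that makes the two halves of the inequality reappear as $t((G,\cdot\otimes c),W)$ for exactly the left-colorings $\ell'\comp\phi_L$ and $\ell'\comp\phi_L^*$, which is what forces the hypothesis that $c$ be invariant under the $\phi$'s. The second delicate point is that a single pass through the folds of the left-cut-percolating sequence does not make the exponents equal to $1/v_1(G)$ — already because $v_1(G)$ need not be a power of $2$ — which is why the iteration is organized as an absorbing Markov chain followed by a symmetrization rather than a finite telescoping product; getting the limiting step and the (harmless) positivity technicalities right is the fussiest part of the write-up.
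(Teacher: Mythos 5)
Your proof is correct and follows the same three-step skeleton the paper uses: (i) a single-fold Cauchy--Schwarz inequality at the level of the left-coloring, which is exactly Lemma~\ref{lem:CStree} applied to $\ell'\otimes c$ together with the observation that $\phi$-invariance of $c$ gives $(\ell'\otimes c)\comp\phi_L=(\ell'\comp\phi_L)\otimes c$; (ii) an iteration producing a bound $F_\ell\leq\prod_v F_{\ell(v)}^{w_v}$ with $w_v\geq 0$, $\sum_v w_v=1$, and the $w_v$ independent of $\ell$ and $W$; and (iii) a symmetrization over the group $\widehat{S}$ generated by the cut-involutions in $S$, using $\widehat{S}$-invariance of $c$ and $\widehat{S}$-transitivity on $V_1(G)$ (Remark~\ref{rmk:transitivity}) to replace the exponent vector by the uniform one. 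Steps (i) and (iii) coincide with the paper essentially verbatim.

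Step (ii) is where you take a genuinely different route. The paper builds the Cauchy--Schwarz tree for the fixed concatenation $\Phi^t$ of a witnessing left-cut-percolating sequence, shows by induction that at least a $1-(1-2^{-m})^t$ fraction of the $2^{tm}$ leaves carry left-constant colorings, and extracts a convergent subsequence of the exponent vectors as $t\to\infty$. You instead run an absorbing Markov chain on the maps $V_1(G)\to V_1(G)$ (equivalently, on left-colorings $\ell\comp\psi$), observe from the single-fold inequality that $\log F$ is a submartingale, verify that constant maps are absorbing and reachable from any state in $m$ steps with probability at least $(2\lvert S\rvert)^{-m}>0$ because $(\phi_1)_{L_1}\comp\cdots\comp(\phi_m)_{L_m}$ maps all of $V_1(G)$ to the singleton $U_0$, and pass to the limit by bounded convergence. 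Both implementations produce the same kind of weight vector and hand off identically to the symmetrization; the probabilistic packaging dispenses with the explicit subsequence extraction at the cost of a little martingale machinery. You also correctly isolate the subtlety that the absorption weights must be read off the induced walk on $V_1(G)$ (not on the quotient by $\ell$) so that they are $\ell$-independent — the same point is implicit in the paper's bookkeeping of leaves. The positivity/limit technicalities you mention (replacing $W_i$ by $W_i+\epsilon$) are a standard and adequate fix.
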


\begin{theorem}\label{thm:leftrefl->leftweakHolder}
  Let $G$ be a left-reflection bigraph and let $c$ be its natural coloring. Then $(G,c)$ is right-uniform,
  color-edge-transitive and left-weakly \Holder.
\end{theorem}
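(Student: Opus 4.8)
The plan is to prove Theorem~\ref{thm:leftrefl->leftweakHolder} by combining the structural facts about left-reflection bigraphs with the two general implications already available. First I would verify the two easy structural properties of $(G,c)$ where $c$ is the natural coloring. Right-uniformity is immediate: by construction the right vertex set of $G$ is $\bigsqcup_{i=1}^k R/R_i$, so the map $f\function{V_2(G)}{[k]}$ sending $(rR_i,i)$ to $i$ is well-defined, and by definition $c(rR_0,(rR_i,i)) = i = f((rR_i,i))$, so $c$ factors through $f$. For color-edge-transitivity, fix two edges $(rR_0,(rR_i,i))$ and $(r'R_0,(r'R_j,j))$ of the same color, so $i=j$. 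The left translation $\sigma_g\function{V(G)}{V(G)}$ by $g\df r'r^{-1}$, sending $sR_0\mapsto gsR_0$ and $(sR_i,i)\mapsto (gsR_i,i)$, is an automorphism of $G$ (it is the amalgamation of the left-translation automorphisms of the component reflection bigraphs $G_i$, and it visibly preserves adjacency and the bipartition) and it preserves $c$ since it fixes the second coordinate $i$; moreover $\sigma_g(rR_0) = r'R_0$ and $\sigma_g((rR_i,i)) = (r'R_i,i)$, as desired.

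The heart of the argument is the left-weakly \Holder\ property, and here the plan is to invoke Theorem~\ref{thm:leftrefl->leftcutperc} together with Theorem~\ref{thm:leftcutperc->leftweakHolder}. By Theorem~\ref{thm:leftrefl->leftcutperc}, $G$ is left-cut-percolating under the set $S$ of reflection folds $\{(\phi(t;S_0;S_1,\ldots,S_k;S_\Delta),L(t;S_0;S_1,\ldots,S_k;S_\Delta)) \mid t\in T\}$. To apply Theorem~\ref{thm:leftcutperc->leftweakHolder} I need the natural coloring $c$ to be invariant under the subgroup $\Gamma$ of $\Aut(G)$ generated by the cut-involutions $\phi(t;S_0;S_1,\ldots,S_k;S_\Delta)$, $t\in T$ --- that is, $c(\psi(v),\psi(w)) = c(v,w)$ for every edge $(v,w)$ and every $\psi\in\Gamma$. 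But each generator $\phi(t;\ldots)$ acts as left translation by $t$ within each block $R/R_i$ and fixes the block index $i$: explicitly $\phi(t;\ldots)(rR_0) = trR_0$ and $\phi(t;\ldots)((rR_i,i)) = (trR_i,i)$. Hence $\phi(t;\ldots)$ sends the edge $(rR_0,(rR_i,i))$ to the edge $(trR_0,(trR_i,i))$, which again has color $i$. So every generator preserves $c$, hence so does $\Gamma$, and Theorem~\ref{thm:leftcutperc->leftweakHolder} applies verbatim to give that $(G,c)$ is left-weakly \Holder.

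I expect the only genuine subtlety --- the ``main obstacle'' in an otherwise bookkeeping proof --- to be making sure all the objects are literally the ones the cited theorems require: namely that the reflection folds of the left-reflection bigraph (as defined in the excerpt via amalgamation of the component reflection folds) are honest folds in the sense of Section~\ref{subsec:cutperc}, and that the coloring-invariance hypothesis of Theorem~\ref{thm:leftcutperc->leftweakHolder} is checked against the correct group. The first point is really the content of Theorem~\ref{thm:leftrefl->leftcutperc}, which I am allowed to assume; the second I dispatch by the one-line observation above that reflection folds fix block indices. I would therefore structure the write-up as: (1) a short paragraph establishing right-uniformity and color-edge-transitivity by exhibiting the relevant factoring function and left-translation automorphisms; (2) a short paragraph citing Theorem~\ref{thm:leftrefl->leftcutperc} to get left-cut-percolation under reflection folds, observing that reflection folds preserve the natural coloring, and then invoking Theorem~\ref{thm:leftcutperc->leftweakHolder} to conclude. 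No substantial calculation is needed beyond these verifications.
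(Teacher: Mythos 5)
Your proposal is correct, and the overall structure matches the paper's proof: right-uniformity is read off the definition, and the left-weakly H\"older\ property is obtained by chaining Theorems~\ref{thm:leftrefl->leftcutperc} and~\ref{thm:leftcutperc->leftweakHolder} after observing that each reflection fold cut-involution $\phi(t;\ldots)$ is a left translation and hence preserves the block index, so the natural coloring is invariant under the subgroup they generate. The one place you take a slightly different (and arguably more elementary) route is color-edge-transitivity: you exhibit the left-translation automorphism $\sigma_{r'r^{-1}}$ directly, whereas the paper derives it by noting that each color class is a reflection bigraph, which is cut-percolating by Conlon--Lee and hence edge-transitive under reflection-fold cut-involutions via Remark~\ref{rmk:transitivity}. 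Both arguments produce automorphisms lying in the same left-translation subgroup (since the reflections $T$ generate $R$), so they are essentially equivalent; your version avoids invoking the cut-percolation machinery for this particular claim, while the paper's version keeps everything phrased uniformly in terms of the cut-involution subgroup.
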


\begin{theorem}\label{thm:incidencecomplete}
  Let $n,t\in\NN_+$ and let $k_1,\ldots,k_t\in[n]$. Then the incidence bigraph $G$ of the complete hypergraph
  on $n$ vertices and in uniformities $k_1,\ldots,k_t$ defined by
  \begin{align*}
    V_1(G) & \df [n],\\
    V_2(G) & \df \bigsqcup_{i=1}^t \binom{[n]}{k_i}
    \df \left\{(U,i) \;\middle\vert\; U\in\binom{[n]}{k_i}, i\in [t]\right\},\\
    E(G) & \df \left\{(v,(U,i)) \;\middle\vert\; v\in U, i\in [t], U\in\binom{[n]}{k_i}\right\}
  \end{align*}
  is a left-reflection bigraph.
\end{theorem}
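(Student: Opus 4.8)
The plan is to exhibit the incidence bigraph $G$ of $K_n^{k_1,\ldots,k_t}$ explicitly as an $(S_0;S_1,\ldots,S_t;S_\Delta)$-left-reflection bigraph for a suitable finite reflection group $R$ and suitable choices of simple-reflection subsets. The natural candidate is the symmetric group $R = \fS_n$ acting on $\RR^n$ by permuting coordinates, i.e., the Coxeter group of type $A_{n-1}$, whose set of simple reflections $S_\Delta = \{s_1,\ldots,s_{n-1}\}$ (with $s_j$ the transposition $(j,j+1)$) is indexed by $[n-1]$. The key observation is the standard fact that for a subset $I\subseteq S_\Delta$, the parabolic subgroup $R_I = \langle I\rangle$ is a Young subgroup $\fS_{A_1}\times\cdots\times\fS_{A_p}$ determined by the partition of $[n]$ into the ``runs'' of consecutive indices left uncut by $I$ — equivalently, $R_I$ is the stabilizer of an ordered set-composition of $[n]$ — and hence the coset space $R/R_I$ is in bijection with a space of ordered tuples of disjoint subsets. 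First I would choose $S_0 = S_\Delta = \{s_1,\ldots,s_{n-1}\}$, so that $R_0 = R$ and $V_1(G) = R/R_0$ is a single point — wait, that is wrong, we need $v_1(G) = n$; instead I would take $S_0 = S_\Delta \setminus \{s_1\} = \{s_2,\ldots,s_{n-1}\}$, so $R_0 \cong \fS_{n-1}$ (the stabilizer of the element $1$), giving $R/R_0 \cong [n]$ as desired. Then for each $i\in[t]$ I would choose $S_i = S_\Delta \setminus \{s_{k_i}\} = \{s_1,\ldots,s_{k_i-1}\}\cup\{s_{k_i+1},\ldots,s_{n-1}\}$, so that $R_i \cong \fS_{k_i}\times\fS_{n-k_i}$ is the stabilizer of the subset $\{1,\ldots,k_i\}$, giving a canonical bijection $R/R_i \cong \binom{[n]}{k_i}$.

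With these identifications in hand, the next step is to check that the edge set matches: an edge of the left-reflection bigraph joins $rR_0$ and $(rR_i,i)$ for $r\in R$, so in coordinates it joins the vertex $r(1)\in[n]$ to the set $\{r(1),\ldots,r(k_i)\}\in\binom{[n]}{k_i}$; as $r$ ranges over $R=\fS_n$, the pairs $(v,U)$ realized are exactly those with $v\in U$, which is precisely $E(G)$ as defined in the statement. This is a routine but necessary verification — one should be a little careful that distinct cosets give distinct vertices (so that the bijections above are genuine) and that no edge is produced twice in a way that would change the (multi-)set $E(G)$, but since bigraphs here have simple edge sets this is immediate. At this point $G$ is literally, up to the canonical isomorphisms, the $(S_0;S_1,\ldots,S_t;S_\Delta)$-left-reflection bigraph, which is exactly the assertion of the theorem.

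I expect the main obstacle to be the bookkeeping around parabolic subgroups of $\fS_n$: one must pin down the correspondence between subsets $I\subseteq S_\Delta$ and Young subgroups precisely enough that the coset spaces $R/R_0$ and $R/R_i$ carry the intended combinatorial meaning, and one must make sure the choice of simple system $\Delta$ (equivalently, the ordered basis $\cU$) is fixed consistently across all the $S_i$ so that a single reflection group $R$ with a single $S_\Delta$ works for all $i\in[t]$ simultaneously — this is what makes the amalgamation over the common left side $V_1(G)=[n]$ legitimate. A secondary point worth stating cleanly is that $t\geq 1$ and $k_i\in[n]$ guarantee all the relevant subsets of $S_\Delta$ are well-defined (when $n=1$ the group is trivial and $G$ is a disjoint union of edges, which is a degenerate but valid left-reflection bigraph; when $k_i=n$ the vertex $(U,i)$ is the full set $[n]$ and $S_i = S_\Delta\setminus\{s_n\} = S_\Delta$, which is still fine). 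Everything else — the existence of the reflection folds, and thus the downstream consequences via Theorems~\ref{thm:leftrefl->leftcutperc} and~\ref{thm:leftcutperc->indSid} — comes for free from the general theory once the identification is made, so the real content of the proof is this single structural recognition.
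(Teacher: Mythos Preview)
Your proposal is correct and follows essentially the same approach as the paper: both take $R=\fS_n$ in its standard reflection representation with simple reflections the adjacent transpositions, set the left-side parabolic to be $S_\Delta\setminus\{s_1\}$ (so $R_0\cong\fS_{n-1}$ and $R/R_0\cong[n]$) and each right-side parabolic to be $S_\Delta\setminus\{s_{k_i}\}$ (so $R_i\cong\fS_{k_i}\times\fS_{n-k_i}$ and $R/R_i\cong\binom{[n]}{k_i}$), and then verify that the edges $(rR_0,(rR_i,i))$ correspond exactly to the incidences $v\in U$. The only difference is cosmetic indexing, and the paper cites~\cite[Example~4.4]{CL17} for the standard root-system facts you sketch directly.
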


As mentioned in the introduction, the next theorem generalizes Theorem~\ref{thm:ConlonLee}
from~\cite[Theorem~1.1]{CL21}.

\begin{restatable}{theorem}{largeright}\label{thm:largeright}
  Let $G$ be a bigraph without isolated vertices and for each $k\in\NN$, let $d_k\df\lvert\{w\in V_2(G) \mid
  d_G(w) = k\}\rvert$ be the number of vertices in $V_2(G)$ that have degree $k$.

  If for each $k\geq 2$, we have either $d_k = 0$ or $d_k\geq\binom{v_1(G)}{k}$, then $G$ is a strong
  Sidorenko bigraph.
\end{restatable}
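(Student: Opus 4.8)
The plan is to reduce the statement to the left-reflection machinery already in place. Assume $G$ has no isolated vertices and that for each $k \geq 2$ we have $d_k = 0$ or $d_k \geq \binom{v_1(G)}{k}$; write $n \df v_1(G)$ and identify $V_1(G)$ with $[n]$. The idea is to build a supergraph $G^\star$ of $G$ that is induced-Sidorenko (indeed left-reflection) so that $G$ appears as an induced subgraph, and then transfer strong-Sidorenkoness down from $G^\star$ to $G$ via a weak-domination argument combined with the homomorphism-counting structure of the extra vertices. Concretely, for each $k$ with $d_k \neq 0$, consider the incidence bigraph of the complete $k$-uniform hypergraph $K_n^{(k)}$ on left-vertex set $[n]$; its right side is $\binom{[n]}{k}$, and since $d_k \geq \binom{n}{k}$ we can (after a suitable identification of degree-$k$ right-vertices of $G$ with $k$-subsets of $[n]$) view each right-vertex $w$ of $G$ of degree $k$ as a copy of a right-vertex $N_G(w)\in\binom{[n]}{k}$ of that complete hypergraph incidence bigraph. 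Taking all such $k$ (those with $d_k\neq 0$) and amalgamating the incidence bigraphs of the corresponding $K_n^{(k_i)}$ over their common left side $[n]$ yields, by Theorem~\ref{thm:incidencecomplete}, a left-reflection bigraph $G_0$ on left side $[n]$; each right-vertex of $G$ of degree $k \geq 2$ realizes a neighborhood equal to the neighborhood of some right-vertex of $G_0$, and the degree-$1$ right-vertices of $G$ correspond to edges (each such edge is a subgraph of $G_0$ since every singleton is a neighborhood of an incidence vertex of any $K_n^{(1)}$ block — here I would either include $k=1$ in the amalgamation or handle stars separately, since stars are trivially strong Sidorenko).

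Next, I would pass from "left-reflection $G_0$" to the desired strong-Sidorenko inequality for $G$. By Theorems~\ref{thm:leftrefl->leftcutperc} and~\ref{thm:leftcutperc->indSid}, $G_0$ is induced-Sidorenko, and by Theorem~\ref{thm:leftrefl->leftweakHolder} its natural coloring makes it right-uniform, color-edge-transitive and left-weakly \Holder. The key structural observation is that $G$ is obtained from (a possibly partial copy of) $G_0$ by \emph{duplicating} right-vertices: $G$'s right-vertex set is a multiset of neighborhoods each of which occurs among $G_0$'s right-vertices. Duplicating a right-vertex $w$ of a bigraph $H$ multiplies $t(H; f, g; W)$ in a controlled way — writing $H'$ for $H$ with $w$ doubled, one has $t(H';f,g;W)$ expressible as an integral of a product of two copies of the $W$-weight localized at $w$, and an application of the Cauchy–Schwarz / \Holder\ inequality in the $x$-variables (using that $G_0$ is left-weakly \Holder, which gives exactly the needed vertex-wise \Holder\ control on the left) lets me compare it against the appropriate power of $t(\rho; \cdot,\cdot; W)$. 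I would set this up so that the left-weak-\Holder\ inequality of $(G_0, c_0)$, applied with the left-coloring encoding the functions $f_v$ and with bigraphons encoding $W \cdot (\text{bits of } g)$, directly produces
\[
  t(G; f, g; W) \;\geq\; t\!\left(\rho; \prod_{v \in V_1(G)} f_v^{1/e(G)}, \prod_{w \in V_2(G)} g_w^{1/e(G)}; W\right)^{e(G)}.
\]
The bookkeeping here is the analogue of Remark~\ref{rmk:leftweakHolder->Sid}, but carrying the global weights $f$, $g$ through; I expect to invoke Lemma~\ref{lem:obstacle} (as that remark does) to justify evaluating the resulting functional correctly.

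The main obstacle I anticipate is the combinatorial alignment step: ensuring that the hypothesis "$d_k = 0$ or $d_k \geq \binom{n}{k}$" genuinely lets every degree-$k$ right-vertex of $G$ be matched to a right-vertex of the complete $k$-uniform incidence bigraph in a way that is compatible across all values of $k$ simultaneously and compatible with the left-weak-\Holder\ inequality (which is symmetric in the left vertices only up to the automorphism group). Because left-reflection bigraphs coming from $K_n^{k_1,\dots,k_t}$ are left-vertex-transitive, any single $k$-subset is in the orbit of any other, so the matching of one degree-$k$ vertex is unconstrained; the subtlety is that $G$ may realize the \emph{same} $k$-subset as a neighborhood many times, which is exactly why we need $d_k \geq \binom{n}{k}$ rather than $d_k = \binom{n}{k}$, and why the duplication argument above (rather than a bare induced-subgraph argument) is required — an induced subgraph of $G_0$ can only use each neighborhood once. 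A secondary, more technical obstacle is making the \Holder/Cauchy–Schwarz duplication estimate interact cleanly with the global functions $g_w$: I would absorb each $g_w$ into the bigraphon sitting on the edges incident to $w$, note that duplicating $w$ with the \emph{same} $g_w$ is the clean case, and reduce the general case by replacing $g_w$ with a single merged function via concavity of $t^{1/e(G)}$ in that slot (the strong-Sidorenko exponents are designed precisely to make this work).
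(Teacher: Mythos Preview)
Your setup is exactly the paper's: take $G_0$ to be the incidence bigraph of $K_n^{k_1,\ldots,k_t}$ on the left side $[n]=V_1(G)$, invoke Theorem~\ref{thm:incidencecomplete} and Theorem~\ref{thm:leftrefl->leftweakHolder}, and use $\Aut(G_0)\cong\fS_n$. The paper then finishes in one line by checking hypotheses~\ref{thm:leftweakHolderorbits:zero} and~\ref{thm:leftweakHolderorbits:geq1} of Theorem~\ref{thm:leftweakHolderorbits} and quoting that theorem. What you are attempting instead is to re-derive the content of Theorem~\ref{thm:leftweakHolderorbits} on the fly, and that is where the proposal has a real gap.

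The description of $G$ as ``a possibly partial copy of $G_0$ with duplicated right vertices'' is not accurate: the hypothesis $d_k\ge\binom{n}{k}$ constrains only the \emph{count} of degree-$k$ right vertices, not which $k$-subsets occur as neighborhoods. A priori $G$ may realize one $k$-subset many times and omit others entirely, so the ``duplicate a right vertex via Cauchy--Schwarz'' step is not available (and would anyway go the wrong direction for a lower bound). The paper's actual mechanism is a \emph{symmetrization}: one rewrites $t(G;f,g;W)$ by a change of variables $x\mapsto x\comp\sigma$ for each $\sigma\in\Aut(H)$, takes the geometric mean over $\sigma$ (all terms equal), and applies \Holder's Inequality to pull the geometric mean inside the integral. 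Only after this averaging does every $k$-subset appear with equal weight, and the integrand becomes $t(H^{\vec p},W')$ for a color-power with all $p_i\ge 1$ (this is where $d_k\ge\binom{n}{k}$ is used). Showing that $H^{\vec p}$ is color-Sidorenko is itself a separate argument (Lemma~\ref{lem:leftweakHoldercolorpower}, via Proposition~\ref{prop:inductiveJensen} and Lemma~\ref{lem:colorrestriction}); it is not a consequence of the left-weak \Holder\ inequality applied with a left-coloring ``encoding the $f_v$''. In particular, left-weak \Holder\ gives an \emph{upper} bound on $t((G_0,\ell\otimes c),W)$, and the trick of Remark~\ref{rmk:leftweakHolder->Sid} only converts this into a lower bound for $t(G_0,W)$ itself, not for $t(G;f,g;W)$ with an arbitrary $G$ sharing the left side. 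So the step you flag as ``the main obstacle'' is genuinely unresolved in your sketch; the missing ingredients are the $\Aut(H)$-symmetrization and the color-power/color-Sidorenko machinery of Section~\ref{sec:orbits}.
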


\begin{restatable}{theorem}{leftweakHolderorbits}\label{thm:leftweakHolderorbits}
  Let $H$ be a non-trivial right-uniform color-edge-transitive left-weakly \Holder\ bigraph without isolated
  vertices and let $G$ be a bigraph with $V_1(G) = V_1(H)$ and without isolated vertices. For every
  $U\subseteq V_1(G)$, let
  \begin{align*}
    d_G(U) & \df \lvert\{w\in V_2(G) \mid N_G(w) = U\}\rvert,\\
    d_H(U) & \df \lvert\{w\in V_2(H) \mid N_H(w) = U\}\rvert.
  \end{align*}
  Suppose further that for every $U\subseteq V_1(G)$ with $\lvert U\rvert\geq 2$ the following hold.
  \begin{enumerate}
  \item $\sum_{\sigma\in\Aut(H)} d_G(\sigma(U)) = 0$ if and only if $\sum_{\sigma\in\Aut(H)} d_H(\sigma(U)) =
    0$.
    \label{thm:leftweakHolderorbits:zero}
  \item $\sum_{\sigma\in\Aut(H)} d_G(\sigma(U)) \geq \sum_{\sigma\in\Aut(H)} d_H(\sigma(U))$.
    \label{thm:leftweakHolderorbits:geq1}
  \end{enumerate}

  Then $G$ is a strong Sidorenko bigraph. In particular, $G(H)$ is a strong Sidorenko bigraph.
\end{restatable}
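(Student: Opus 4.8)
The plan is to derive, from $H$ being left-weakly \Holder, a colored strong Sidorenko inequality for $H$; to fold the vertex weights of the strong Sidorenko inequality into the colors of $H$ (using global weights); and then to transport the inequality from $G(H)$ to $G$ by matching the right-vertex profiles of $G$ and $H$ one $\Aut(H)$-orbit at a time.

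I would begin with reductions. Pass to global weight functions, i.e.\ replace the families $(f_v)_v,(g_w)_w$ of Definition~\ref{def:strongSidorenko} by a single $f\function{\Omega}{\RR_+}$ and a single $g\function{\Lambda}{\RR_+}$; this is harmless by the equivalence of Appendix~\ref{sec:strongSid} and buys two things: $t(G;f,g;W)$ becomes invariant under any relabeling of $V_1(G)=V_1(H)$, hence under $\Aut(H)$; and, since $G(H)$ is left-regular (being left-color-regular, Lemma~\ref{lem:leftweakHolder->colorSid}) and color-edge-transitivity makes the degree of a right vertex of $H$ depend only on its color, the weights $f,g$ can later be absorbed into a single per-color bigraphon. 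I would also use the scale-invariance of both sides of the strong Sidorenko inequality under $W\mapsto cW$, $f\mapsto cf$, $g\mapsto cg$ to normalize its right-hand side to $1$, so that the goal becomes $t(G;f,g;W)\ge 1$, and peel off the degree-$\le 1$ right vertices of $G$ (pendant edges, handled as in the classical Sidorenko pendant-edge reduction; the hypotheses concern only $|U|\ge 2$), so that $d_G(w)\ge 2$ for every $w\in V_2(G)$.

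Next comes the combinatorial dictionary. Because $H=(G(H),c_H)$ is right-uniform, its right vertices split by color; because it is color-edge-transitive, the neighborhoods of the right vertices of a fixed color form a single $\Aut(H)$-orbit, each of whose members carries the same multiplicity. Hence $d_H$ is constant, with value $\delta_H(O)$, on each $\Aut(H)$-orbit $O$ of subsets of $V_1$, and hypotheses~\ref{thm:leftweakHolderorbits:zero}--\ref{thm:leftweakHolderorbits:geq1} (for orbits $O$ whose members have size $\ge 2$) say precisely that the orbit-average $\frac{1}{|O|}\sum_{U\in O}d_G(U)$ is zero iff $\delta_H(O)=0$, and is at least $\delta_H(O)$ otherwise. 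Absorbing the global weights $f,g$ into a per-color bigraphon as above, the colored Sidorenko-type inequality furnished by the left-weak \Holder\ property (Lemma~\ref{lem:leftweakHolder->colorSid}; see also the colorless case in Remark~\ref{rmk:leftweakHolder->Sid}) applies verbatim to $(G(H),c_H)$ and shows $G(H)$ to be strong Sidorenko; this already gives the ``in particular'' clause, taking $G=G(H)$, where~\ref{thm:leftweakHolderorbits:zero}--\ref{thm:leftweakHolderorbits:geq1} hold with equality, and the same conclusion persists under blow-ups of $H$.

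The crux is the transport from $G(H)$ to $G$. Let $\widehat G$ be the bigraph with left vertex set $V_1$ and, for each $w\in V_2(G)$ and $\sigma\in\Aut(H)$, a right vertex of neighborhood $\sigma(N_G(w))$; let $\widehat H$ be defined analogously (a blow-up of $H$). The right-vertex multisets of $\widehat G$ and $\widehat H$ are orbit-constant, and hypothesis~\ref{thm:leftweakHolderorbits:geq1} makes the right-vertex multiset of $\widehat G$ contain that of $\widehat H$, with every ``leftover'' right vertex of $\widehat G$ having --- by hypothesis~\ref{thm:leftweakHolderorbits:zero} --- its neighborhood in an orbit $O$ with $\delta_H(O)>0$, hence duplicating a right vertex already present in $\widehat H$. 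A duplication step --- adjoining a right vertex whose neighborhood copies that of an existing one preserves the strong Sidorenko inequality --- together with one application of Jensen's Inequality then promotes strong Sidorenko from $\widehat H$ to $\widehat G$; a further Jensen/\Holder\ step, legitimate precisely because global weights are $\Aut(H)$-invariant, descends from $\widehat G$ back to $G$; and rearranging, with the forcing Lemma~\ref{lem:obstacle} pinning the exponent to $e(G)$, gives $t(G;f,g;W)\ge 1$, i.e.\ that $G$ is strong Sidorenko. The main obstacle is exactly this last paragraph: one must verify that a duplication preserves the \emph{strong} (not merely ordinary) Sidorenko inequality and carry out the descent from $\widehat G$ to $G$ without leaking a multiplicative constant, all while tracking that the extra right vertices of $G$ enlarge $e(G)$ and thus shift the target exponent --- which is where the normalization above and Lemma~\ref{lem:obstacle} earn their keep. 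Hypothesis~\ref{thm:leftweakHolderorbits:zero} is what ensures that every leftover right vertex is genuinely a duplicate, and hypothesis~\ref{thm:leftweakHolderorbits:geq1} is what makes $\widehat H$ fit inside $\widehat G$.
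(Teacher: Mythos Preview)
There are two genuine gaps.

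First, the reduction to global weights is not valid. Appendix~\ref{sec:strongSid} establishes an equivalence between Definition~\ref{def:strongSidorenko} (per-vertex weights $f_v,g_w$ on probability spaces) and Sidorenko's class $\cF$ (per-vertex weights plus global scaling weights $F,G$ on finite measure spaces); in both formulations the per-vertex weights remain. Replacing them by a single $f$ and a single $g$ is a strict weakening of the inequality you must prove, and the $\Aut(H)$-invariance of $t(G;f,g;W)$ that you rely on for the descent $\widehat G\to G$ simply fails for per-vertex weights. The paper copes with this lack of invariance differently: it applies \Holder\ at the outset, symmetrizing the \emph{integrand} over all $\sigma\in\Aut(H)$ (your $\widehat G$ is morally this step), which yields a lower bound on $t(G;f,g;W)$ that is $\Aut(H)$-invariant even though $t(G;f,g;W)$ itself is not. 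A second \Holder\ then collapses the family $(g_w)_w$ within each color to a geometric mean, and the $(f_v)_v$ get absorbed into the per-color bigraphons only after these two steps.

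Second, your ``duplication preserves the strong Sidorenko inequality'' is exactly the crux, and you leave it unproved. Adjoining a right vertex with a copied neighborhood increases $e(G)$, which simultaneously shifts the target exponent and all the weight exponents $1/e(G)$ on the right-hand side; there is no one-line Jensen that tracks both. In the paper this step is precisely Lemma~\ref{lem:leftweakHoldercolorpower}: the color-power $H^{\vec p}$ with all $p_i\ge 1$ (this is where hypothesis~\ref{thm:leftweakHolderorbits:geq1} enters) is color-Sidorenko. Its proof is not short: it needs a reduction (Lemma~\ref{lem:colorregularcolorSid}) to sequences of bigraphons all but one of which are left-regular, a ``color restriction'' inequality (Lemma~\ref{lem:colorrestriction}) comparing $t(H_C,W)$ to $t(H,W)$ derived from the left-weak \Holder\ property, and an inductive Jensen for decreasingly ordered exponents (Proposition~\ref{prop:inductiveJensen}). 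None of these appears in your sketch, and Lemma~\ref{lem:obstacle} does not substitute for them --- it only pins exponents once an inequality is already known. Your overall architecture (peel off pendant right vertices; symmetrize over $\Aut(H)$; compare to a blow-up of $H$) matches the paper's, but the symmetrization must come before the reduction to a single colored object, and the passage from $H$ to its fractional power is where the real work lies.
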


The most useful examples of right-uniform color-edge transitive left-weakly \Holder\ bigraphs to be used in
the theorem above are obtained from left-reflection bigraphs through
Theorem~\ref{thm:leftrefl->leftweakHolder}.

\section{Left-sided properties}
\label{sec:leftsided}

In this section, we show the theorems that do not directly involve Sidorenko's Conjecture, that is,
Theorems~\ref{thm:leftrefl->leftcutperc}, \ref{thm:leftcutperc->leftweakHolder}
and~\ref{thm:incidencecomplete}. These theorems are mostly direct analogues of Conlon--Lee~\cite{CL17}.

Theorem~\ref{thm:leftrefl->leftcutperc} will be easily derived from the following property of
left-cut-percolating sequences.
\begin{lemma}\label{lem:amalgleftcutperc}
  Let $G_1,\ldots,G_k$ be bigraphs with $V_1(G_1) = \cdots = V_1(G_k)$ and $V_2(G_1),\ldots,V_2(G_k)$ pairwise
  disjoint and let $G$ be the amalgamation of $G_1,\ldots,G_k$ over the left side, i.e., we have
  \begin{align*}
    V_1(G) & \df V_1(G_1), &
    V_2(G) & \df \bigcup_{i\in[k]} V_2(G_i), &
    E(G) & \df \bigcup_{i\in[k]} E(G_i).
  \end{align*}

  Suppose further that $G_1$ has a left-cut-percolating sequence $U_0,U_1,\ldots,U_m$ with respect
  to $\Phi = ((\phi_1,L_1),\ldots,(\phi_m,L_m))$ and for every $i\in[m]$ and $j\in\{2,\ldots,k\}$,
  there exists a fold $(\psi_{ij},L_{ij})$ of $G_j$ such that $\phi_i\rest_{V_1(G_1)} =
  \psi_{ij}\rest_{V_1(G_j)}$ and $L_i\cap V_1(G_1) = L_{ij}\cap V_1(G_j)$.

  Then $U_0,U_1,\ldots,U_m$ is a left-cut-percolating sequence in $G$ with respect to
  $\widehat{\Phi} = ((\widehat{\phi}_1,\widehat{L}_1),\ldots,(\widehat{\phi}_m,\widehat{L}_m))$,
  where $\widehat{L}_i\df L_i\cup\bigcup_{j=2}^k L_{ij}$ and 
  \begin{align*}
    \widehat{\phi}_i(v) & =
    \begin{dcases*}
      \phi_i(v), & if $v\in V(G_1)$,\\
      \psi_{ij}(v), & if $v\in V(G_j)$, $j\in\{2,\ldots,k\}$
    \end{dcases*}
  \end{align*}
  is the amalgamation of $\phi_i,\psi_{2k},\ldots,\psi_{ik}$.
\end{lemma}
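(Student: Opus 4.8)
The plan is to verify, for each $i\in[m]$, the two things a left-cut-percolating sequence in $G$ requires of the triple $(\widehat\phi_i,\widehat L_i)$: first, that it is a \emph{fold} of $G$, and second, that $U_i=(\widehat\phi_i)_{\widehat L_i}^{-1}(U_{i-1})$. Since $\lvert U_0\rvert=1$ and $U_m=V_1(G_1)=V_1(G)$ are inherited directly from the sequence in $G_1$, this is all that is needed. To unify notation I would set $\psi_{i1}\df\phi_i$ and $L_{i1}\df L_i$, so that $(\psi_{ij},L_{ij})$ is a fold of $G_j$ for \emph{every} $j\in[k]$, $\widehat\phi_i\rest_{V(G_j)}=\psi_{ij}$, and $\widehat L_i=\bigcup_{j=1}^k L_{ij}$; by hypothesis all the $\psi_{ij}$ restrict to the \emph{same} map on the common set $V_1(G)$, and $A\df L_{ij}\cap V_1(G)$ is independent of $j$.

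First I would check that $\widehat\phi_i$ is a well-defined cut-involution of $G$. It is a well-defined bijection of $V(G)$ since the $\psi_{ij}$ agree on the only overlap $V_1(G)$, it preserves $E(G)=\bigcup_j E(G_j)$ since each $\psi_{ij}\in\Aut(G_j)$, and it is an involution since each $\psi_{ij}$ is. For the cut condition one computes $\Fix(\widehat\phi_i)=(\Fix(\phi_i)\cap V_1(G))\cup\bigcup_j(\Fix(\psi_{ij})\cap V_2(G_j))$, and then, using that each $(\psi_{ij},L_{ij})$ is a fold, that $(\widehat L_i,\Fix(\widehat\phi_i),\widehat\phi_i(\widehat L_i))$ partitions $V(G)$: on $V_1(G)$ it is the common partition $(A,\Fix(\phi_i)\cap V_1(G),\phi_i(A))$, and on each $V_2(G_j)$ it is the fold partition $(L_{ij}\cap V_2(G_j),\ \Fix(\psi_{ij})\cap V_2(G_j),\ \psi_{ij}(L_{ij})\cap V_2(G_j))$ of $G_j$. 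It then remains to see that $G$ has no edge between $\widehat L_i$ and $\widehat\phi_i(\widehat L_i)$: any such edge lies in a single $E(G_j)$, and — intersecting the relevant sets with $V(G_j)$ and using that its left endpoint, lying in $A$, is already in $L_{ij}$ (resp. in $\psi_{ij}(A)\subseteq\psi_{ij}(L_{ij})$) — it becomes an edge of $G_j$ between $L_{ij}$ and $\psi_{ij}(L_{ij})$ with neither endpoint fixed by $\psi_{ij}$, contradicting that $G_j\rest_{L_{ij}}$ is a union of connected components of $G_j-\Fix(\psi_{ij})$. Hence $G\rest_{\widehat L_i}$ is a union of connected components of $G-\Fix(\widehat\phi_i)$; and provided $\widehat L_i\ne\varnothing$ — which we may assume, since otherwise all $\psi_{ij}$ are the identity at step $i$ and then $U_i=U_{i-1}$, so the step can be dropped from the sequence — $\Fix(\widehat\phi_i)$ is a genuine vertex-cut, so $(\widehat\phi_i,\widehat L_i)$ is a fold of $G$.

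Next I would check $U_i=(\widehat\phi_i)_{\widehat L_i}^{-1}(U_{i-1})$. The point is that the left-folding maps agree on $V_1(G)$: for $v\in V_1(G)$ one has $v\in\widehat L_i\iff v\in A\iff v\in L_i$, and in either case $(\widehat\phi_i)_{\widehat L_i}(v)$ equals $v$ (if $v\in\widehat L_i$) or $\phi_i(v)$ (if not), which is exactly $(\phi_i)_{L_i}(v)$. Since any left-folding map preserves the bipartition and $U_{i-1}\subseteq V_1(G)$, the preimage $(\widehat\phi_i)_{\widehat L_i}^{-1}(U_{i-1})$ lies in $V_1(G)$ and coincides with $(\phi_i)_{L_i}^{-1}(U_{i-1})=U_i$. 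Combining this with the previous paragraph, $U_0,\ldots,U_m$ is a left-cut-percolating sequence in $G$ with respect to $\widehat\Phi$, as claimed. The only genuinely non-routine part is the bookkeeping in the second step above — verifying that an edge of $G$ crossing from $\widehat L_i$ to $\widehat\phi_i(\widehat L_i)$ forces a crossing edge inside some $G_j$ — which is exactly where the hypothesis $L_i\cap V_1(G_1)=L_{ij}\cap V_1(G_j)$ is used; everything else is straightforward verification of definitions.
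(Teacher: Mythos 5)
Your proof is correct and follows essentially the same route as the paper's: you set $\psi_{i1}\df\phi_i$, $L_{i1}\df L_i$, verify that the amalgamated $(\widehat\phi_i,\widehat L_i)$ is a fold, and note that its left-folding map restricts to $(\phi_i)_{L_i}$ on $V_1(G)$. The only cosmetic difference is in the last fold condition: you show directly that no edge of $G$ can cross from $\widehat L_i$ to $\widehat\phi_i(\widehat L_i)$ by reducing to a forbidden crossing edge inside some $G_j$, while the paper partitions a path within a component of $G-\Fix(\widehat\phi_i)$ into segments each contained in a single $G_j$ and tracks that each stays inside $L_{ij}$ --- two phrasings of the same observation, and your $\widehat L_i\neq\varnothing$ caveat is in fact automatic since any fold has $L_i\neq\varnothing$.
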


\begin{proof}
  Since $(\widehat{\phi}_i)_{\widehat{L}_i}\rest_{V_1(G)} = (\phi_i)_{L_i}\rest_{V_1(G_1)}$, it will follow that
  $U_0,U_1,\ldots,U_m$ is a left-cut-percolating sequence of $G$ with respect to $\widehat{\Phi}$ as long as
  we show that $\widehat{\Phi}$ is indeed a sequence of folds of $G$.
  
  Fix $i\in[m]$, write $\psi_{i1}\df\phi_i$ and $L_{i1}\df L_i$. The fact that $\widehat{\phi}_i$ is an
  involution of $G$ follows simply because the functions $\psi_{i1},\ldots,\psi_{ik}$ are involutions of their
  respective bigraphs and they coincide in the common part $V_1(G_1)$.

  From the same property, it also follows that $\Fix(\widehat{\phi}_i) = \bigcup_{j=1}^k\Fix(\psi_{ij})$ and
  $\widehat{\phi}_i(\widehat{L}_i) = \bigcup_{j=1}^k \psi_{ij}(L_{ij})$ and thus $\widehat{\phi}_i$ is a
  cut-involution of $G$ and $(\widehat{L}_i,\Fix(\widehat{\phi}_i),\widehat{\phi}_i(\widehat{L}_i))$ forms a
  partition of $V(G)$.

  It remains to show that $G\rest_{\widehat{L}_i}$ is a union of connected components of $G -
  \Fix(\widehat{\phi}_i)$. To show this, it is sufficient to show that if $v_1,v_2\in
  V(G)\setminus\Fix(\widehat{\phi}_i)$ are in the same component of $G - \Fix(\widehat{\phi}_i)$ and
  $v_1\in\widehat{L}_i$, then $v_2\in\widehat{L}_i$. But indeed, if we partition any path $P$ from
  $v_1$ to $v_2$ into segments $P_1,\ldots,P_n$ such that each segment $P_t$ is completely contained
  in $V(G_{j_t})$ for some $j_t\in[k]$, then $P_t$ must be entirely contained in $L_{j_t}$ as
  $L_{j_t}$ is a union of connected components of $G_{j_t} - \Fix(\psi_{ij})$ (and $\Fix(\psi_{ij})
  = \Fix(\widehat{\phi}_i)\cap V(G_j)$). Therefore $(\widehat{\phi}_i,\widehat{L}_i)$ is a fold of
  $G$.
\end{proof}

We now prove Theorem~\ref{thm:leftrefl->leftcutperc}, which says that every left-reflection bigraph is
left-cut-percolating under reflection folds.

\begin{proofof}{Theorem~\ref{thm:leftrefl->leftcutperc}}
  Let $G$ be an $(S_0;S_1,\ldots,S_k;S_\Delta)$-left-reflection bigraph relative to the reflection group $R$
  with set of reflections $T$ and for each $j\in[k]$ let $G_j$ be the $(S_0,S_j;S_\Delta)$-reflection
  bigraph.

  By~\cite[Corollary~4.9 and Theorem~4.12]{CL17}, there exists a cut-percolating sequence
  $E_0,\ldots,E_m$ with respect to a sequence $\Phi=((\phi_1,L_1),\ldots,(\phi_m,L_m))$ of \emph{reflection
    folds} of $G_1$ ($i\in[m]$).

  Since $G_1$ does not have isolated vertices, by Remark~\ref{rmk:cutperc->leftcutperc}, setting $U_i\df
  E_i\cap V_1(G_1)$ ($i\in\{0,1,\ldots,m\}$) gives a left-cut-percolating sequence of $G_1$ with respect to
  $\Phi$.

  For each $i\in [m]$, let $t_i\in T$ be a reflection defining the reflection fold $(\phi_i,L_i)$ of $G_1$,
  that is, we have $\phi_i = \phi(t_i;S_0,S_1;S_\Delta)$ and $L_i = L(t_i;S_0,S_1;S_\Delta)$ and for each
  $j\in\{2,\ldots,k\}$, let $(\psi_{ij},L_{ij})\df (\phi(t_i;S_0,S_j;S_\Delta), L(t_i;S_0,S_j;S_\Delta))$ be
  the reflection fold of $G_j$ defined by the same reflection $t_i$. Then the hypotheses of
  Lemma~\ref{lem:amalgleftcutperc} are satisfied and we deduce that $G$ is left-cut-percolating with respect
  to a sequence of reflection folds.
\end{proofof}

To prove Theorem~\ref{thm:leftcutperc->leftweakHolder}, we need to recall another definition and lemma
from~\cite{CL17}.

\begin{definition}[slightly adapted from Conlon--Lee~\protect{\cite[\S 3]{CL17}}]
  Let $G$ be bigraph, let $c\function{E(G)}{C}$ be a coloring of $G$ and let
  $\Phi=((\phi_1,L_1),\ldots,(\phi_m,L_m))$ be a sequence of folds of $G$. The \emph{Cauchy--Schwarz tree}
  rooted at $(G,c)$ corresponding to $\Phi$ is the rooted complete binary tree $T(G,c,\Phi)$ of height $m$ in
  which each vertex is labeled by a coloring of $c$ so that:
  \begin{enumerate}
  \item The root of $T(G,c,\Phi)$ is labeled by $c$.
  \item If a node at height $i\in\{0,1,\ldots,m-1\}$ is labeled by $c'$, then its left and right children are
    labeled by $c\comp(\phi_i)_{L_i}$ and $c\comp(\phi_i)_{L_i}^*$, respectively.
  \end{enumerate}
\end{definition}

\begin{lemma}[adapted from Conlon--Lee~\protect{\cite[\S 3]{CL17}}]\label{lem:CStree}
  Let $G$ be a bigraph, let $c\function{E(G)}{C}$ be a coloring of $G$, let $(\phi,L)$ be a fold of $G$ and
  let $W = (W_j)_{j\in C}$ be a sequence of bigraphons. Then
  \begin{align*}
    t((G,c),W) & \leq
    t((G,c\comp\phi_L),W)^{1/2}\cdot
    t((G,c\comp\phi_L^*),W)^{1/2},
  \end{align*}

  More generally, if $\Phi=((\phi_1,L_1),\ldots,(\phi_m,L_m))$ is a sequence of folds of $G$ and
  $c_1,\ldots,c_{2^m}$ are the colorings (with multiplicities) that label the leaves of the Cauchy--Schwarz
  tree $T(G,c,\Phi)$, then
  \begin{align*}
    t((G,c),W) & \leq \prod_{t=1}^{2^m} t((G,c_t),W)^{2^{-m}}.
  \end{align*}
\end{lemma}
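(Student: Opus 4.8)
The plan is to prove the single-fold inequality first and then iterate it along the Cauchy--Schwarz tree. For the single-fold step, fix a fold $(\phi,L)$ of $G$ and write $F = \Fix(\phi)$, so that $(L,F,\phi(L))$ partitions $V(G)$ and $G\rest_L$ is a union of connected components of $G-F$. The key observation is that every edge of $G$ either lies entirely in $L\cup F$, or entirely in $\phi(L)\cup F$ (an edge cannot have one endpoint in $L$ and the other in $\phi(L)$, since removing $F$ separates these, and $L$ is a union of components of $G-F$). Thus $E(G)$ splits into $E_L$ (edges inside $L\cup F$), $E_R = \phi(E_L)$ (edges inside $\phi(L)\cup F$), and $E_F$ (edges inside $F$). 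First I would integrate out the variables indexed by $L$ and by $\phi(L)$, keeping the variables indexed by $F$ fixed; this factors $t((G,c),W)$ as $\int_{F\text{-vars}} A(z)\cdot B(z)\, d(\mu\otimes\nu)(z)$, where $A(z)$ is the partial integral over the $L$-variables of the product of $W_{c(e)}$ over $e\in E_L$ (times the $E_F$-factors split appropriately, say all into $A$), and $B(z)$ is the analogous partial integral over the $\phi(L)$-variables of the product over $e\in E_R$. Because $\phi$ is an involution fixing $F$ pointwise and is an automorphism of $G$, renaming integration variables via $\phi$ shows that $B(z)$ is exactly the same partial integral as $A(z)$ but with colors $c(\phi^{-1}(e))$ instead of $c(e)$ on the corresponding edges. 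Then the Cauchy--Schwarz inequality $\int A\cdot B \le (\int A^2)^{1/2}(\int B^2)^{1/2}$ gives $t((G,c),W)^2 \le t((G,c'),W)\cdot t((G,c''),W)$, where $c'$ and $c''$ are the colorings obtained by reflecting the colors on the ``$\phi(L)$ side'' back via $\phi$ into the ``$L$ side'' (and vice versa). Matching these against the definitions, $c' = c\comp\phi_L$ and $c'' = c\comp\phi_L^*$: indeed $\phi_L$ is identity on $L$ and $\phi$ on $V(G)\setminus L$, so precomposing $c$ with $\phi_L$ keeps the $E_L$ colors and pulls back the $E_R$ colors, which is precisely the coloring appearing in the squared term $\int A^2$, and symmetrically for $\phi_L^*$. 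This establishes the first displayed inequality.

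For the general statement, I would induct on the height $m$ of the Cauchy--Schwarz tree $T(G,c,\Phi)$. The base case $m=0$ is trivial. For the inductive step, apply the single-fold inequality with the fold $(\phi_1,L_1)$ to get $t((G,c),W)\le t((G,c\comp(\phi_1)_{L_1}),W)^{1/2}\cdot t((G,c\comp(\phi_1)_{L_1}^*),W)^{1/2}$, then apply the induction hypothesis to each of the two subtrees rooted at the children of the root (which are themselves Cauchy--Schwarz trees of height $m-1$ for the bigraphs $(G,c\comp(\phi_1)_{L_1})$ and $(G,c\comp(\phi_1)_{L_1}^*)$ with the fold sequence $((\phi_2,L_2),\ldots,(\phi_m,L_m))$), and multiply. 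The exponents combine as $\tfrac12\cdot 2^{-(m-1)} = 2^{-m}$, and the multiset of leaf colorings of the two subtrees is exactly the multiset of leaf colorings of $T(G,c,\Phi)$, yielding $t((G,c),W)\le\prod_{t=1}^{2^m} t((G,c_t),W)^{2^{-m}}$.

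The main obstacle is the bookkeeping in the single-fold step: one must carefully verify that no edge of $G$ crosses between $L$ and $\phi(L)$ (this is where the ``$L$ is a union of components of $G-\Fix(\phi)$'' hypothesis is essential, not merely that $\Fix(\phi)$ is a cut), and then identify precisely which coloring appears after reflecting — i.e., confirming that the reflected colorings are exactly $c\comp\phi_L$ and $c\comp\phi_L^*$ rather than some other variant. The edges lying inside $\Fix(\phi)$ need a small remark: they are fixed by $\phi$ (since $\phi$ fixes $F$ pointwise), so their colors are unchanged under both $\phi_L$ and $\phi_L^*$, and assigning all of $E_F$ to the ``$A$ side'' is consistent and causes no trouble. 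Everything else is a routine application of Fubini, the substitution rule for the measure-preserving map $\phi$ on the product space (using that $\phi$ permutes coordinates), and the Cauchy--Schwarz inequality.
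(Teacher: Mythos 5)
Your overall plan — integrate out the $L$- and $\phi(L)$-variables, factor the integrand over the $\Fix(\phi)$-variables, apply Cauchy--Schwarz, identify the squared terms via the substitution $\phi$, then induct for the general statement — is exactly the route the paper takes, and your observation that no edge crosses from $L$ to $\phi(L)$ is correct and necessary. However, there is a genuine error in the single-fold step, precisely in the clause you treated as a harmless afterthought: assigning \emph{all} of the $E_F$-factors (the factors $\prod_{e\in E_F} W_{c(e)}$ for edges inside $\Fix(\phi)$) to the function $A$. With that choice, the plain Cauchy--Schwarz $\int A\cdot B\le(\int A^2)^{1/2}(\int B^2)^{1/2}$ produces $\int A^2$ containing the $E_F$-factors \emph{squared} and $\int B^2$ containing them not at all; neither equals $t((G,c\comp\phi_L),W)$ or $t((G,c\comp\phi_L^*),W)$, both of which have each $E_F$-factor appearing exactly once (with its original color, since $\phi_L$ and $\phi_L^*$ fix $\Fix(\phi)$ pointwise). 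Your closing remark that this split ``is consistent and causes no trouble'' conflates the fact that $\phi_L$ and $\phi_L^*$ do not change the \emph{colors} on $E_F$ (true) with the separate issue of the \emph{multiplicity} of those factors in $\int A^2$ versus $\int B^2$ (where it does cause trouble). Your claim that $B$ is ``exactly the same partial integral as $A$'' after the variable renaming is also inconsistent with $A$ carrying extra $E_F$-factors that $B$ lacks.

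The fix is the one the paper uses: treat the $E_F$-product as a multiplicative \emph{weight} and apply weighted Cauchy--Schwarz, $\int g\,f_1 f_2\le(\int g\,f_1^2)^{1/2}(\int g\,f_2^2)^{1/2}$ with $g=\prod_{e\in E_F}W_{c(e)}$; equivalently, split the $E_F$-factors evenly, putting $g^{1/2}$ into each of $A$ and $B$, and then apply the unweighted inequality. With that change, $\int g\,f_1^2 = t((G,c\comp\phi_L),W)$ and $\int g\,f_2^2 = t((G,c\comp\phi_L^*),W)$ as desired, and the rest of your argument (including the induction over the Cauchy--Schwarz tree) goes through unchanged.
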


\begin{proof}
  For the first statement, let
  \begin{align*}
    G_0 & \df G\rest_{\Fix(\phi)}, &
    G_1 & \df G\rest_{L\cup\Fix(\phi)} - E(G_0), &
    G_2 & \df G\rest_{\phi(L)\cup\Fix(\phi)} - E(G_0),\\
    c_0 & \df c\rest_{E(G_0)}, &
    c_1 & \df c\rest_{E(G_1)}, &
    c_2 & \df c\rest_{E(G_2)},\\
    F_0 & \df ((G_0,c_0),\Fix(\phi)), &
    F_1 & \df ((G_1,c_1),\Fix(\phi)), &
    F_2 & \df ((G_2,c_2),\Fix(\phi)).
  \end{align*}
  By Cauchy--Schwarz Inequality, we have
  \begin{align*}
    t((G,c),W)
    & =
    \int_{X^{V_1}\cap Y^{V_2}}
    t(F_0,W)(x,y)\cdot t(F_1,W)(x,y)\cdot t(F_2,W)(x,y)
    \ d(\mu\otimes\nu)(x,y)
    \\
    & \leq
    \begin{multlined}[t]
      \left(\int_{X^{V_1}\cap Y^{V_2}}
      t(F_0,W)(x,y)\cdot t(F_1,W)(x,y)^2
      \ d(\mu\otimes\nu)(x,y)\right)^{1/2}
      \\
      \cdot
      \left(\int_{X^{V_1}\cap Y^{V_2}}
      t(F_0,W)(x,y)\cdot t(F_2,W)(x,y)^2
      \ d(\mu\otimes\nu)(x,y)\right)^{1/2}
    \end{multlined}
    \\
    & =
    t((G,c\comp\phi_L),W)^{1/2}\cdot
    t((G,c\comp\phi_L^*),W)^{1/2},
  \end{align*}
  where $V_i\df V_i(G)\cap\Fix(\phi)$.

  The statement for Cauchy--Schwarz trees follows by induction.
\end{proof}

We now show Theorem~\ref{thm:leftcutperc->leftweakHolder}, whose statement is repeated below.

\leftcutperctoleftweakHolder*

\begin{proofof}{Theorem~\ref{thm:leftcutperc->leftweakHolder}}
  Let $U_0,\ldots,U_m$ be a left-cut-percolating sequence of $G$ with respect to a sequence $\Phi\df
  ((\phi_1,L_1),\ldots,(\phi_m,L_m))$ of folds in $S$ and for each $t\in\NN_+$, let $\Phi^t$ be the
  concatenation of $\Phi$ with itself $t$ times. Let also $\widehat{S}$ be the subgroup of $\Aut(G)$ generated
  by $\{\phi \mid (\phi,L)\in S\}$.

  Let us call a coloring of $G$ left-constant if it is of the form $\ell\otimes c$ for some \emph{constant}
  left-coloring $\ell\function{V_1(G)}{C'}$.

  We claim that for any left-coloring $\ell\function{V_1(G)}{C'}$, at least a $1 - (1 - 2^{-m})^t$ fraction
  of the leaves of the Cauchy--Schwarz tree $T(G,\ell\otimes c,\Phi^t)$ are labeled by left-constant colorings
  of $G$.

  For $t = 1$, since $U_0,\ldots,U_m$ is a left-cut-percolating sequence, the label of the leftmost leaf of
  $T(G,\ell\otimes c,\Phi)$ is
  \begin{align*}
    c_0
    & \df
    (\ell\otimes c)\comp(\phi_1)_{L_1}\comp\cdots\comp(\phi_m)_{L_m}
    =
    (\ell\comp(\phi_1)_{L_1}\comp\cdots\comp(\phi_m)_{L_m})\otimes c,
  \end{align*}
  where the equality follows from the fact that $c$ is $\widehat{S}$-invariant. This means that if $v_0$ is
  the unique element of $U_0$, then for every $(v,w)\in E(G)$, we have
  \begin{align*}
    c_0(v,w)
    & =
    ((\ell\comp(\phi_1)_{L_1}\comp\cdots(\phi_m)_{L_m})(v),c(v,w))
    =
    (\ell(v_0),c(v,w)),
  \end{align*}
  so $c_0 = \ell(v_0)\otimes c(v,w)$. Therefore at least a $2^{-m} = 1 - (1 - 2^{-m})^1$ fraction of leaves are
  labeled by constant left-colorings

  Suppose now that $t\geq 2$ and note that if a node of $T(G,\ell\otimes c,\Phi^t)$ is labeled by a
  left-constant coloring $c_0$, then all of its descendants must also be labeled by $c_0$. By induction, we
  also know that at least a $1 - (1 - 2^{-m})^{t-1}$ fraction of the nodes at level $(t-1)m$ are labeled by
  left-constant colorings. On the other hand, the case $t=1$ above also guarantees that for each node at level
  $(t-1)m$, at least one of its descendant leaves is labeled by a left-constant coloring. This means that the
  fraction of leaves of $T(G,\ell\otimes c,\Phi^t)$ labeled by left-constant colorings is at least
  \begin{align*}
    (1 - (1 - 2^{-m})^{t-1}) + \frac{(1 - 2^{-m})^{t-1}}{2^m}
    & =
    1 - (1 - 2^{-m})^t,
  \end{align*}
  as desired.

  \medskip

  Note now that the definition of $T(G,\ell\otimes c,\Phi^t)$ ensures that each of the left-constant
  colorings that appears in the leaves must be of the form $\ell(v)\otimes c$ for some $v\in V_1(G)$. This
  means that if $\cC$ is the set of colorings $c'\function{E(G)}{C'\times C}$ that are \emph{not}
  left-constant, then applying Lemma~\ref{lem:CStree} to $T(G,\ell\otimes c,\Phi^t)$ and using the claim
  above, we get that
  \begin{align*}
    t((G,\ell\otimes c),W)
    & \leq
    \prod_{v\in V_1(G)} t((G,\ell(v)\otimes c),W)^{\alpha^t_v} \cdot\prod_{c'\in\cC} t((G,c'),W)^{\beta^t_{c'}}
  \end{align*}
  for some $\alpha^t_v\in [0,1]$ and $\beta^t_c\in[0,1]$ satisfying
  \begin{align*}
    \sum_{v\in V_1(G)} \alpha^t_v + \sum_{c'\in\cC} \beta^t_{c'} & = 1, &
    \sum_{v\in V_1(G)} \alpha^t_v & \geq 1 - (1 - 2^{-m})^t.
  \end{align*}

  Letting $t\to\infty$ along some subsequence such that $(\alpha^t_v)_t$ is convergent for every $v\in
  V_1(G)$, we get
  \begin{align*}
    t((G,\ell\otimes c),W)
    & \leq
    \prod_{v\in V_1(G)} t((G,\ell(v)\otimes c),W)^{\alpha_v}
  \end{align*}
  for some $\alpha_v\geq 0$ such that $\sum_{v\in V_1(G)}\alpha_v = 1$, since we have $\beta_t^{c'}\leq
  (1-2^{-m})^t\xrightarrow{t\to\infty} 0$ for every $c'\in\cC$.

  Recall from Remark~\ref{rmk:transitivity} that $G$ is left-vertex-transitive under the action of
  $\widehat{S}$. Then, by Remark~\ref{rmk:Autcolor} we get
  \begin{align*}
    t((G,\ell\otimes c),W)
    & =
    \prod_{\sigma\in\widehat{S}} t((G,(\ell\otimes c)\comp\sigma),W)^{1/\lvert\widehat{S}\rvert}
    \\
    & \leq
    \prod_{\sigma\in\widehat{S}} \prod_{v\in V_1(G)} t((G,(\ell(v)\otimes c)\comp\sigma),W)^{\alpha_v/\lvert\widehat{S}\rvert}
    \\
    & =
    \prod_{\sigma\in\widehat{S}} \prod_{v\in V_1(G)} t((G,(\ell(\sigma(v))\otimes c)),W)^{\alpha_v/\lvert\widehat{S}\rvert}
    \\
    & =
    \prod_{v\in V_1(G)} t((G,\ell(v)\otimes c),W)^{1/v_1(G)},
  \end{align*}
  where the second equality follows since $c$ is $\widehat{S}$-invariant and the third equality follows since
  $G$ is left-vertex-transitive under the action of $\widehat{S}$. Therefore $(G,c)$ is left-weakly \Holder.
\end{proofof}

Finally, for Theorem~\ref{thm:incidencecomplete}, which says that the incidence bigraph of the complete
hypergraph on $n$ vertices and in uniformities $k_1,\ldots,k_t$ is a left-reflection bigraph, we will heavily
rely on the fact that the incidence bigraph of complete $k_i$-uniform hypergraph is a reflection bigraph.

\begin{proofof}{Theorem~\ref{thm:incidencecomplete}}
  Recall from~\cite[Example~4.4]{CL17} that the symmetric group $\fS_n$ on $n$ points with its natural
  embedding in $\GL_n(\RR)$ is generated by the transpositions, which are the reflections of $\fS_n$. By using
  the canonical ordered basis $U\df(e_1,\ldots,e_n)$, the set of positive roots of $\fS_n$ is precisely
  \begin{align*}
    \Phi^+ & = \left\{\frac{e_i - e_j}{\sqrt{2}} \;\middle\vert\; 1\leq i < j\leq n\right\},
  \end{align*}
  the set of simple roots is
  \begin{align*}
    \Delta & = \left\{\frac{e_i - e_{i+1}}{\sqrt{2}} \;\middle\vert\; i\in [n-1]\right\},
  \end{align*}
  and the set of simple reflections is
  \begin{align*}
    S_\Delta & = \{t_{i,i+1} \mid i\in[n-1]\},
  \end{align*}
  where $t_{i,j}$ is the transposition that swaps $i$ and $j$.

  For $k\in[n]$, let $S_k\df S_\Delta\setminus\{t_{k,k+1}\}$, which generates the naturally embedded subgroup
  $R_k\df\fS_k\times\fS_{n-k}$. The left-cosets of $R_k$ can be identified with $\binom{[n]}{k}$ naturally via
  $\sigma R_k\mapsto \sigma([k])$, which means that the $(S_1,S_k;S_\Delta)$-reflection bigraph is isomorphic
  to the incidence bigraph of the complete $k$-uniform hypergraph $H$ on $n$ vertices given by
  \begin{align*}
    V_1(H) & \df [n], &
    V_2(H) & \df \binom{[n]}{k}, &
    E(H) & \df \left\{(i,A)\in [n]\times\binom{[n]}{k} \;\middle\vert\; i\in A\right\},
  \end{align*}
  and the isomorphism is given by
  \begin{align*}
    \fS_n/R_1\ni \sigma R_1 & \mapsto \sigma(1)\in V_1(H),\\
    \fS_n/R_k\ni \sigma R_k & \mapsto \sigma([k])\in V_1(H).
  \end{align*}

  Thus the $(S_1;S_{k_1},\ldots,S_{k_t};S_\Delta)$-left-reflection bigraph is isomorphic to the incidence
  bigraph $G$ of the complete hypergraph on $n$ vertices and in uniformities $k_1,\ldots,k_t$, with the
  isomorphism given by
  \begin{align*}
    \fS_n/R_1\ni \sigma R_1 & \mapsto \sigma(1)\in V_1(G),\\
    \fS_n/R_{k_i}\ni \sigma R_{k_i} & \mapsto (\sigma([k]), j)\in V_2(G).
    \qedhere
  \end{align*}
\end{proofof}

We conclude this section proving Theorem~\ref{thm:leftrefl->leftweakHolder}, which says that left-reflection
bigraphs become right-uniform color-edge-transitive left-weakly \Holder\ bigraphs when equipped with their
natural coloring.

\begin{proofof}{Theorem~\ref{thm:leftrefl->leftweakHolder}}
  The fact that $G$ is left-weakly \Holder\ follows from Theorems~\ref{thm:leftrefl->leftcutperc}
  and~\ref{thm:leftcutperc->leftweakHolder} by noting that the natural coloring of a left-reflection bigraph
  is invariant under the subgroup generated by the cut-involutions coming from reflection folds.

  It is also obvious from the definition of the natural coloring that it is right-uniform. Finally, since each
  of the color classes of $G$ yields a reflection bigraph, which is cut-percolating under reflection folds
  by~\cite[Corollary~4.9 and Theorem~4.12]{CL17}, hence edge-transitive with respect to cut-involutions coming
  from reflection folds (see Remark~\ref{rmk:transitivity}), it follows that $G$ is color-edge-transitive
  under the action of the group generated by cut-involutions coming from reflection folds.
\end{proofof}

\section{Induced-Sidorenko}
\label{sec:indSid}

In this section, we prove Theorem~\ref{thm:leftcutperc->indSid} that says that every left-cut-percolating
bigraph is induced-Sidorenko.

We start with a lemma that is often viewed as an obstacle for inequalities concerning densities in
bigraphons. However, this lemma can also be used positively to deduce equalities of exponents (see for
example its use in Remark~\ref{rmk:leftweakHolder->Sid}).

\begin{lemma}\label{lem:obstacle}
  Let $W$ be a non-zero bigraphon, let $G_1,\ldots,G_n$ be bigraphs and let $r_1,\ldots,r_n\in\RR$. If for
  every $\lambda > 0$, we have
  \begin{align*}
    \prod_{i=1}^n t(G_i,\lambda W)^{r_i} & \geq 1,
  \end{align*}
  then $\sum_{i=1}^n r_i\cdot e(G_i) = 0$.
\end{lemma}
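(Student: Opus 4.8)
The plan is to exploit the homogeneity of the functional $t(\place,\place)$ in its bigraphon argument: for every bigraph $G$ and every $\lambda>0$ we have $t(G,\lambda W)=\lambda^{e(G)}\,t(G,W)$, since the integrand in~\eqref{eq:tGW} is a product of exactly $e(G)$ factors of the form $W(x_v,y_w)$, each of which is scaled by $\lambda$ when $W$ is replaced by $\lambda W$.

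Before using this, I would record that $0<t(G_i,W)<\infty$ for each $i$. Finiteness is immediate since $W$ is bounded. For positivity, note first that $W$ being non-zero (and non-negative) gives $t(\rho,W)=\int_{X\times Y}W\ d(\mu\otimes\nu)>0$; moreover each $G_i$ is a subgraph of the complete bipartite bigraph $K$ on the vertex classes $V_1(G_i),V_2(G_i)$, which is a Sidorenko bigraph (an elementary consequence of Jensen's inequality), so that
\[
  t(G_i,W)\ \geq\ \|W\|_\infty^{\,e(G_i)-v_1(G_i)v_2(G_i)}\,t(K,W)\ \geq\ \|W\|_\infty^{\,e(G_i)-v_1(G_i)v_2(G_i)}\,t(\rho,W)^{v_1(G_i)v_2(G_i)}\ >\ 0 ,
\]
the first inequality because deleting edges only multiplies $t$ by powers of $\|W\|_\infty$. (Alternatively, one may simply invoke the standard fact that non-zero bigraphons have strictly positive homomorphism densities.) In particular $C\df\prod_{i=1}^n t(G_i,W)^{r_i}$ is a well-defined number in $(0,\infty)$.

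Now the hypothesis, rewritten through the scaling identity, reads
\[
  1\ \leq\ \prod_{i=1}^n\bigl(\lambda^{e(G_i)}t(G_i,W)\bigr)^{r_i}\ =\ \lambda^{\,s}\,C \qquad\text{for all }\lambda>0,
\]
where $s\df\sum_{i=1}^n r_i\,e(G_i)$. Taking logarithms, this says that the affine function $u\mapsto s\,u+\log C$ of the variable $u=\log\lambda$, which ranges over all of $\RR$, is everywhere non-negative; an affine function on $\RR$ that is bounded below by $0$ must have zero slope, so $s=0$, which is the claim.

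The computation itself is a one-liner, so I expect the only real content to be the strict positivity $t(G_i,W)>0$. This cannot be dropped: if some $t(G_i,W)$ vanished while $r_i<0$, the left-hand side of the hypothesis would be $+\infty\geq 1$ for every $\lambda$, yet $\sum_i r_i\,e(G_i)$ need not be zero. Hence establishing (or quoting) positivity of densities of non-zero bigraphons is the step that actually needs to be argued.
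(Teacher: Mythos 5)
Your proof is correct and takes essentially the same approach as the paper's: rewrite the hypothesis via homogeneity $t(G_i,\lambda W)=\lambda^{e(G_i)}t(G_i,W)$, observe $t(G_i,W)>0$, and conclude that the exponent of $\lambda$ must vanish (the paper phrases this by letting $\lambda\to 0^+$ or $\lambda\to\infty$, you phrase it as an affine function of $\log\lambda$ being bounded below). Your extra care in justifying $t(G_i,W)>0$ via comparison with the complete bipartite bigraph is welcome --- the paper simply asserts this --- but it is not a different method.
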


\begin{proof}
  Since $t(G_i,\lambda W) = \lambda^{e(G_i)}\cdot t(G_i,W)$, we must have
  \begin{align*}
    \lambda^{\sum_{i=1}^n r_i\cdot e(G_i)}\prod_{i=1}^n t(G_i,W)^{r_i} \geq 1
  \end{align*}
  for every $\lambda > 0$. Since $W$ is non-zero, we have $t(G_i,W) > 0$. If the exponent of
  $\lambda$ is positive, then making $\lambda$ small enough violates the inequality above. If the
  exponent of $\lambda$ is negative, then making $\lambda$ large enough violates the inequality
  above.
\end{proof}

We now introduce the notion of $2$-threshold subgraphs.

\begin{definition}
  Let $G$ be a bigraph and let $f\function{V(G)}{\{0,1,2\}}$, the \emph{$2$-threshold subgraph of $G$
    corresponding to $f$} is the spanning subgraph $G_f$ of $G$ defined by
  \begin{gather*}
    V_1(G_f) \df V_1(G), \qquad
    V_2(G_f) \df V_2(G),
    \\
    E(G_f) \df \{(v,w)\in E(G) \mid f(v) + f(w)\geq 2\}.
  \end{gather*}
  We let $T_2(G)$ be the set of all $2$-threshold subgraphs of $G$.

  Given an endomorphism $\phi\in\End(G)$ of $G$ and a spanning subgraph $G'$ of $G$, we let $\phi^{-1}(G')$ be
  the spanning subgraph of $G$ defined by
  \begin{gather*}
    V_1(\phi^{-1}(G')) \df V_1(G), \qquad
    V_2(\phi^{-1}(G')) \df V_2(G),
    \\
    E(\phi^{-1}(G')) \df \phi^{-1}(E(G')) = \{(v,w)\in E(G) \mid (\phi(v),\phi(w))\in E(G')\}.
  \end{gather*}
\end{definition}

\begin{remark}\label{rmk:2thresh}
  It is easy to see that if $f = \One_V$ for some $V\subseteq V(G)$, then $G_f = G - (E(G)\setminus
  E(G\rest_V))$, that is, $2$-threshold subgraphs corresponding to $\{0,1\}$-valued functions
  essentially capture induced subgraphs (except for the presence of extra isolated vertices to make
  them spanning).

  It is also easy to see that for an endomorphism $\phi\in\End(G)$ of $G$ and $f\function{V(G)}{\{0,1,2\}}$,
  we have $\phi^{-1}(G_f) = G_{f\comp\phi}$.
\end{remark}

\begin{lemma}\label{lem:2thresh}
  Let $G$ be a left-cut-percolating bigraph and let
  \begin{align*}
    T & \df \{G_f\in T_2(G) \mid f\function{V(G)}{\{0,1,2\}}, f^{-1}(2)\subseteq V_1(G)\}.
  \end{align*}

  Then there exists $m\in\NN_+$ such that for every $H\in T$, there exist $\ell_H\in\NN$ with $\ell_H\leq
  e(G)/v_1(G)$ and a function $r_H\function{T}{\RR_+}$ such that $\sum_{H'\in T} r_H(H') = 1 - 2^{-m}$ and
  \begin{align*}
    t(H,W) & \leq \frac{t(G,W)^{1/2^m}}{t(\rho,W)^{\ell_H}}\cdot\prod_{H'\in T} t(H',W)^{r_H(H')}.
  \end{align*}
  for every biregular non-zero bigraphon $W$.
\end{lemma}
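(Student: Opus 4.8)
The plan is to feed a left-cut-percolating sequence of $G$ into the Cauchy--Schwarz tree of Lemma~\ref{lem:CStree}, applied to a two-coloring of $G$ that encodes the relevant $2$-threshold subgraph.

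\emph{Setup.} Fix a left-cut-percolating sequence $U_0,\ldots,U_m$ of $G$ with respect to folds $\Phi=((\phi_1,L_1),\ldots,(\phi_m,L_m))$, write $U_0=\{v_0\}$, and let this $m$ be the one in the statement (we may assume $E(G)\neq\varnothing$, as otherwise $T=\{G\}$ and everything is trivial). By Remark~\ref{rmk:transitivity}, $G$ is left-vertex-transitive under a subgroup of $\Aut(G)$, hence left-regular; put $d\df e(G)/v_1(G)=d_G(v)$ for every $v\in V_1(G)$. For $g\function{V(G)}{\{0,1,2\}}$ with $g^{-1}(2)\subseteq V_1(G)$, let $c_g\function{E(G)}{\{0,1\}}$ be $c_g(v,w)\df\One[g(v)+g(w)\geq 2]$; writing $1$ for the constant bigraphon, one has $t((G,c_g),(1,W))=t(G_g,W)$, and $c_g\comp\phi=c_{g\comp\phi}$ for every endomorphism $\phi$ of $G$. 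Thus running the Cauchy--Schwarz tree $T(G,c_g,\Phi)$ with the bigraphon pair $(1,W)$, its leaves are labeled by colorings $c_{g\comp\psi_1},\ldots,c_{g\comp\psi_{2^m}}$ (with multiplicity), where the $\psi_t$ are the compositions of left-/right-folding maps along root-to-leaf paths; since the folding maps preserve the bipartition, each $g\comp\psi_t$ still has $(g\comp\psi_t)^{-1}(2)\subseteq V_1(G)$, so $G_{g\comp\psi_t}\in T$ (cf.\ Remark~\ref{rmk:2thresh}).

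\emph{Case 1: $H=G_f$ with $f^{-1}(2)\neq\varnothing$.} Pick $v^\ast\in f^{-1}(2)$ and, by left-vertex-transitivity, $\sigma\in\Aut(G)$ with $\sigma(v_0)=v^\ast$; by Remark~\ref{rmk:Autcolor}, $t(G_f,W)=t((G,c_f\comp\sigma),(1,W))=t(G_g,W)$ for $g\df f\comp\sigma$, and $g(v_0)=2$. Run $T(G,c_g,\Phi)$; the all-left leaf corresponds to $\psi_1=(\phi_1)_{L_1}\comp\cdots\comp(\phi_m)_{L_m}$, and the defining property of the sequence gives $\psi_1^{-1}(\{v_0\})=U_m=V_1(G)$, i.e.\ $\psi_1$ collapses $V_1(G)$ to $v_0$; since $g(v_0)=2$ and $g$ is $\{0,1\}$-valued on $V_2(G)$, every edge of $G$ lies in $G_{g\comp\psi_1}$, so $G_{g\comp\psi_1}=G$. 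Lemma~\ref{lem:CStree} then gives
\begin{align*}
  t(H,W)=t(G_g,W)\leq t(G,W)^{2^{-m}}\prod_{t=2}^{2^m}t(G_{g\comp\psi_t},W)^{2^{-m}},
\end{align*}
and since all $G_{g\comp\psi_t}\in T$, this is the claimed inequality with $\ell_H\df 0$ and $r_H(H')\df 2^{-m}\cdot\lvert\{t\geq 2:G_{g\comp\psi_t}=H'\}\rvert$.

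\emph{Case 2: $H=G_f$ with $f$ taken $\{0,1\}$-valued,} so $G_f=G\rest_V$ up to isolated vertices, where $V\df f^{-1}(1)$. If $V\cap V_1(G)\neq\varnothing$ pick $v^\ast\in V\cap V_1(G)$; otherwise $G_f$ is edgeless and I pick any $v^\ast\in V_1(G)$. Let $g$ agree with $f$ except $g(v^\ast)\df 2$; then $g^{-1}(2)=\{v^\ast\}$ and $E(G_g)=E(G_f)\sqcup\{(v^\ast,w):w\in N_G(v^\ast)\setminus V\}$ (with $N_G(v^\ast)\setminus V=N_G(v^\ast)$ in the second case). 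In both cases the right endpoints of these extra edges are isolated in $G_f$, so integrating their coordinates out of $t(G_g,W)$ and using left-regularity of $W$ turns each such integral into a factor $t(\rho,W)$, giving $t(G_g,W)=t(\rho,W)^{\delta}\cdot t(G_f,W)$ with $\delta\df\lvert N_G(v^\ast)\setminus V\rvert\leq d_G(v^\ast)=d$. Applying Case~1 to $G_g\in T$ (which now has a value $2$) and dividing by $t(\rho,W)^{\delta}$ yields the claimed inequality for $H$ with $\ell_H\df\delta\leq d=e(G)/v_1(G)$ and $r_H$ the function produced for $G_g$. The main obstacle is exactly Case~2: induced subgraphs (in particular the edgeless one) have no $2$-threshold representation using a value $2$, so the Cauchy--Schwarz tree alone cannot manufacture the $t(G,W)^{1/2^m}$ factor; promoting a left vertex repairs this at the cost of a power of $t(\rho,W)$, and it is left-regularity of $G$ that keeps that power $\leq e(G)/v_1(G)$, as the constraint on $\ell_H$ requires — the point to verify carefully is that the promoted right endpoints really are isolated in $G_f$, which is what makes the coordinate integration clean.
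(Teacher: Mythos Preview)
Your proof is correct and follows essentially the same approach as the paper: encode $H$ via a $\{0,1\}$-coloring, run the Cauchy--Schwarz tree along the left-cut-percolating sequence so that the leftmost leaf collapses to $G$, and for $\{0,1\}$-valued $f$ promote one left vertex to value $2$ at the cost of a power of $t(\rho,W)$, using biregularity and left-regularity of $G$.

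The one substantive difference is your treatment of the edgeless subcase ($V\cap V_1(G)=\varnothing$): the paper handles it separately by invoking the Sidorenko property of $G$ (via Theorem~\ref{thm:leftcutperc->leftweakHolder} and Remark~\ref{rmk:leftweakHolder->Sid}) and then needs the auxiliary bound $v_1(G)\le 2^m$ to control $\ell_H=e(G)/2^m$; you instead apply the same promotion trick uniformly, obtaining $\ell_H=d_G(v^\ast)=e(G)/v_1(G)$ directly. Your route is slightly cleaner here, as it avoids the forward reference to the Sidorenko property and the extra counting argument.
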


\begin{proof}
  Let $U_0,\ldots,U_m$ be a left-cut-percolating sequence of $G$ with respect to a sequence of folds
  $\Phi=((\phi_1,L_1),\ldots,(\phi_m,L_m))$ and let $v_0$ be the unique element of $U_0$.

  Fix an element $H = G_f$ ($f\function{V(G)}{\{0,1,2\}}$ with $f^{-1}(2)\subseteq V_1(G)$) of $T$ and let us
  first prove the case when $f(v_0) = 2$ and in this case, we will show that we can take $\ell_H =
  0$.

  Note that there is a natural one-to-one correspondence between colorings $c\function{E(G)}{\{0,1\}}$ of $G$
  and spanning subgraphs of $G$ in which a coloring $c$ corresponds to the spanning subgraph $G^c$ given by
  $E(G^c)\df c^{-1}(1)$ and a spanning subgraph $G'$ of $G$ corresponds to the coloring
  $c_{G'}\df\One_{E(G')}$. Consider the sequence $W' = (W'_0,W'_1)$ of bigraphons, where $W_0'\df 1$ and
  $W_1'\df W$ and note that $t((G,c),W') = t(G^c,W)$.

  Consider now the Cauchy--Schwarz tree $T(G,c_H,\Phi)$ and since the folding maps are endomorphisms of $G$,
  by Remark~\ref{rmk:2thresh}, all of the nodes of this tree are labeled by colorings of the form $c_{H'}$ for
  some $H'\in T$. Note further that by the same remark, the leftmost leaf of $\Phi$ has label $c_0\df
  c_{G_g}$, where $g\function{V(G)}{\{0,1\}}$ is given by
  \begin{align*}
    g & \df f\comp(\phi_1)_{L_1}\comp\cdots(\phi_m)_{L_m},
  \end{align*}
  and since $U_0,\ldots,U_m$ is left-cut-percolating, we have $g(v) = g(v_0) = 2$ for every $v\in V_1(G)$,
  which implies $G^{c_0} = G_g = G$. By Lemma~\ref{lem:CStree}, it follows that
  \begin{align*}
    t(H,W)
    & =
    t((G,c_H),W)
    \leq
    t(G,W)^{1/2^m}\cdot\prod_{H'\in T} t(H',W)^{r_H(H')},
  \end{align*}
  where $r_H(H')$ is the number of non-leftmost leaves of $T(G,c_H,\Phi)$ that are labeled by $c_{H'}$ divided
  by $2^m$ (note that $c_G$ can also appear as the label of non-leftmost leaves) and thus $\sum_{H'\in T}
  r_H(H') = 1 - 2^{-m}$.

  \medskip

  We now consider the case when $f(v) = 2$ for some $v\in V_1(G)$ (but not necessarily $v=v_0$) and we will
  also show that in this case we can take $\ell_H = 0$. By Remark~\ref{rmk:transitivity}, we know that $G$ is
  left-vertex-transitive, so letting $\psi\in\Aut(G)$ be an automorphism with $\psi(v_0) = v$, from 
  Remarks~\ref{rmk:Autcolor} and~\ref{rmk:2thresh}, we get
  \begin{align*}
    t(H,W)
    & =
    t((G,c_H),W')
    =
    t((G,c_H\comp\psi),W')
    \\
    & =
    t(\psi^{-1}(G_f),W)
    =
    t(G_{f\comp\psi},W)
  \end{align*}
  so the result follows from the previous case as $(f\comp\psi)(v_0) = 2$.

  \medskip

  Let us now show the case when $\im(f)\subseteq\{0,1\}$ and $f(v_0) = 1$. Let $f'\function{V(G)}{\{0,1,2\}}$
  be the function obtained from $f$ by changing the value of $v_0$ to $2$ and let
  \begin{align*}
    \ell_H & \df \lvert N_G(v_0)\cap f^{-1}(0)\rvert \leq d_G(v_0) = \frac{e(G)}{v_1(G)}
  \end{align*}
  be the number of neighbors of $v_0$ that have value $0$ under $f$ (the last equality in the above
  follows from left-vertex-transitivity). Note that $G_{f'}$ is obtained from $G_f$ by precisely
  adding the $\ell_H$ edges from $v_0$ to its neighbors in $G$ whose value under $f$ is $0$, and
  since $\im(f)\subseteq\{0,1\}$, the right endpoint of these newly added edges were isolated
  vertices in $G_f$, hence in $G_{f'}$ they have only $v_0$ as their neighbor. Since $W$ is
  biregular, it follows that
  \begin{align*}
    t(G_{f'},W) & = t(G_f,W)\cdot t(\rho,W)^{\ell_H}.
  \end{align*}
  Since $f'(v_0) = 2$, the result now follows from the first case (note that the upper bound on $\ell_H$ for
  this case is $e(G)/v_1(G)$ since the first case was shown with $\ell_H = 0$).

  \medskip

  The case when $\im(f)\subseteq\{0,1\}$ and there exists $v\in V_1(G)$ with $f(v)=1$ follows by
  left-vertex-transitivity from the previous case.

  \medskip

  The final case is when $f\rest_{V_1(G)} = 0$. But since $f(V_2(G))\subseteq\{0,1\}$, it follows that $H$ is
  empty and since $G$ is a Sidorenko bigraph (by Theorem~\ref{thm:leftcutperc->leftweakHolder} and
  Remark~\ref{rmk:leftweakHolder->Sid}), we get
  \begin{align*}
    t(H,W)
    & =
    1
    \leq
    \frac{t(G,W)^{1/2^m}}{t(\rho,W)^{e(G)/2^m}}
    =
    \frac{t(G,W)^{1/2^m}}{t(\rho,W)^{e(G)/2^m}}\cdot t(H,W)^{1 - 2^{-m}},
  \end{align*}
  which means that the result will follow by setting $r_H(H')\df \One[H'=H] (1 - 2^{-m})$ and $\ell_H\df
  e(G)/2^m$ as long as we prove the bound $\ell_H\leq e(G)/v_1(G)$, that is, we need to show that $v_1(G)\leq
  2^m$. But note that from the definition of left-cut-percolating sequence, we have $\lvert U_i\rvert\leq
  2\lvert U_{i-1}\rvert$ for every $i\in[m]$, hence a simple induction gives $v_1(G) = \lvert U_m\rvert\leq
  2^m\lvert U_0\rvert = 2^m$, as desired.
\end{proof}

We conclude this section proving Theorem~\ref{thm:leftcutperc->indSid}, which says that every
left-cut-percolating bigraph is induced-Sidorenko.

\begin{proofof}{Theorem~\ref{thm:leftcutperc->indSid}}
  Let $G$ be a left-cut-percolating bigraph and $G'$ be an induced bigraph of $G$ and let us show that $G$
  weakly dominates $G'$.

  Let $T$ and $m$ be as
  in Lemma~\ref{lem:2thresh} and for each $H\in T$, let $\ell_H$ and $r_H$ also be as in the same lemma. Let
  also $p\df 1-2^{-m}$.

  Let $H\df G_{\One_{V(G')}}$ be the $2$-threshold subgraph of $G$ corresponding to the indicator function
  $\One_{V(G')}$ of $V(G')$. Note that $H\in T$ and since $E(H) = E(G')$, for every biregular non-zero
  bigraphon $W$, we have $t(G',W) = t(H,W)$, so to show that $G$ weakly dominates $G'$, we can show that it
  weakly dominates $H$ (see Remark~\ref{rmk:2thresh}).

  We construct a sequence $(\alpha_n)_{n\in\NN}$ of non-negative reals and a sequence
  $(r_n)_{n\in\NN}$ of functions $r_n\function{T}{\RR_+}$ such that the following hold.
  \begin{enumerate}
  \item For every $n\in\NN$ and every biregular non-zero bigraphon $W$, we have
    \begin{align}\label{eq:leftcutperc->indSid:tHW}
      t(H,W) & \leq \frac{t(G,W)^{1 - p^n}}{t(\rho,W)^{\alpha_n}} \prod_{H'\in T} t(H',W)^{r_n(H')}.
    \end{align}
  \item For every $n\in\NN$, we have
    \begin{align*}
      \sum_{H'\in T} r_n(H') = p^n.
    \end{align*}
  \item For every $n\in\NN$, we have
    \begin{align*}
      \alpha_n \leq \alpha_{n+1} \leq \alpha_n + p^n\cdot\frac{e(G)}{v_1(G)}.
    \end{align*}
  \end{enumerate}

  The construction is by induction: we start with $\alpha_0 = 0$ and $r_0(H') = \One[H=H']$ and for
  $n\in\NN_+$, we define
  \begin{align*}
    \alpha_n & \df \alpha_{n-1} + \sum_{H'\in T} r_{n-1}(H')\cdot\ell_{H'}, &
    r_n(H') & \df \sum_{H''\in T} r_{H''}(H')\cdot r_{n-1}(H'').
  \end{align*}
  The three items follow by induction when we apply Lemma~\ref{lem:2thresh} to all $H'\in T$ on the right-hand
  side of~\eqref{eq:leftcutperc->indSid:tHW}.

  Note that the second item ensures that $\lim_{n\to\infty} r_n(H') = 0$ for every $H'\in T$ and the third
  item ensures that the limit $\alpha\df\lim_{n\to\infty} \alpha_n$ exists as the sequence
  $(\alpha_n)_{n\in\NN}$ is non-decreasing and upper bounded by $e(G)/((1-p)v_1(G))$. By letting $n\to\infty$
  in~\eqref{eq:leftcutperc->indSid:tHW}, we conclude that for every biregular non-zero bigraphon $W$, we have
  \begin{align*}
    t(H,W) & \leq \frac{t(G,W)}{t(\rho,W)^\alpha}
  \end{align*}
  and since this holds for every such $W$, by Lemma~\ref{lem:obstacle}, it follows that $\alpha = e(G) -
  e(H)$, so $G$ weakly dominates $H$ as desired.
\end{proofof}

\section{Symmetrizations and fractional powers of colored bigraphs}
\label{sec:orbits}

In this section we prove Theorems~\ref{thm:largeright} and~\ref{thm:leftweakHolderorbits}. We start by showing
how the former (which is restated below) is a particular case of the latter.

\largeright*

\begin{proofof}{Theorem~\ref{thm:largeright}}
  Without loss of generality, suppose that $V_1(G) = [n]$. Let
  \begin{align*}
    K\df \{d_G(w) \mid w\in V_2(G)\}\setminus\{0,1\} = \{k\in\{2,\ldots,n\} \mid d_k\neq 0\}
  \end{align*}
  and enumerate its elements as $k_1,\ldots,k_t$. Let $G'$ be the incidence bigraph of the complete hypergraph
  on $n$ vertices and in uniformities $k_1,\ldots,k_t$. By Theorem~\ref{thm:incidencecomplete}, we know that
  $G'$ is a left-reflection bigraph. It is straightforward to see that its natural coloring
  $c\function{E(G')}{[t]}$ is the unique function such that $d_{G'}(w) = k_{c(v,w)}$ for every $(v,w)\in
  E(G')$. By Theorem~\ref{thm:leftrefl->leftweakHolder}, we know that $H\df(G,c)$ is left-weakly \Holder.
  Note also that $\Aut(H) = \Aut(G')$ (and is isomorphic to the symmetric group $\fS_n$ on $n$
  points). Furthermore, note that for $U\subseteq [n]$, we have
  \begin{align*}
    \sum_{\sigma\in\Aut(H)} d_G(\sigma(U))
    & =
    \frac{\lvert\Aut(H)\rvert\cdot d_{\lvert U\rvert}}{\binom{n}{\lvert U\rvert}},
    \\
    \sum_{\sigma\in\Aut(H)} d_H(\sigma(U))
    & =
    \begin{dcases*}
      \lvert\Aut(H)\rvert, & if $\lvert U\rvert\in K$,\\
      0, & otherwise.
    \end{dcases*}
  \end{align*}
  Since $d_k\geq\binom{n}{k}$ for every $k\in K$, the result now follows from
  Theorem~\ref{thm:leftweakHolderorbits}.
\end{proofof}

To prove Theorem~\ref{thm:leftweakHolderorbits}, we will need to work with fractional colored bigraphs and a
notion that we call color-Sidorenko defined below. The intuition is that a fractional colored bigraph encodes
the left side of a right-uniform colored bigraph as its vertex set $V$ and counts how many vertices on the right
side have neighborhood in color $i\in C$ exactly equal to a set $U\subseteq V$, except that we allow this
count to be fractional.

\begin{definition}
  A \emph{colored fractional bigraph} is a function $h\function{2^V\times C}{\RR_+}$, where $V$ and $C$ are
  sets, called \emph{vertex set} and \emph{color set} of $h$, respectively. We use the shorthand notations
  $V_h\df V$ and $C_h\df C$ and we let
  \begin{align*}
    v(h) & \df \lvert V_h\rvert,\\
    e_i(h) & \df \sum_{U\subseteq V_h} \lvert U\rvert\cdot h(U,i) \qquad (i\in C),\\
    e(h) & \df \sum_{i\in C} e_i(h).
  \end{align*}
  For each $i\in C_h$ and each $v\in V_h$, the \emph{$i$-degree of $v$ in $h$} is defined as
  \begin{align*}
    d_{h,i}(v) & \df \sum_{\substack{U\subseteq V_h\\ v\in U}} h(U,i).
  \end{align*}
  We say that $h$ is \emph{color-regular} if for every $i\in C_h$ and every $v_1,v_2\in V_h$, we have
  $d_{h,i}(v_1) = d_{h,i}(v_2)$ (which is equivalent to saying that $d_{h,i}(v) = e_i(h)/v(h)$ for every $v\in
  V_h$ and every $i\in C_h$).

  Given a right-uniform colored bigraph $H$ without isolated vertices, its corresponding colored fractional
  bigraph $h_H\function{2^{V_1(H)}\times C_H}{\RR_+}$ is defined by
  \begin{align*}
    h_H(U,i) & \df \lvert\{w\in V_2(H) \mid N_H(w) = U\land \forall v\in N_H(w), c_H(v,w) = i\}\rvert.
  \end{align*}

  Given a colored fractional bigraph $h$ and a sequence $W = (W_i)_{i\in C_h}$ of bigraphons over the same
  spaces $\Omega=(X,\mu)$ and $\Lambda=(Y,\nu)$, we let
  \begin{align*}
    t(h,W)
    & \df
    \int_{X^{V_h}}
    \prod_{U\subseteq V_h} \prod_{i\in C_h}
    t(K_{\lvert U\rvert,1}^L, W_i)(x_U)^{h(U,i)}
    \ d\mu(x).
  \end{align*}
  (Note that this ensures that $t(H,W) = t(h_H,W)$ for right-uniform colored bigraphs $H$.)

  Given a colored fractional bigraph $h$ and a tuple $\vec{p}\df (p_i)_{i\in C_h}\in \RR_+^{C_h}$, the \emph{$\vec{p}$
    color-power} of $h$ is the colored fractional bigraph $h^{\vec{p}}\function{2^{V_h}\times C_h}{\RR_+}$ given by
  \begin{align*}
    h^{\vec{p}}(U,i) & \df h(U,i)\cdot p_i
  \end{align*}
  We extend this definition to right-uniform colored bigraphs $H$ without isolated vertices as $H^{\vec{p}}\df
  h_H^{\vec{p}}$.

  Given a set of colors $C$, the \emph{$C$-rainbow star} is the connected colored bigraph $\rho_C$ with one
  vertex on the left side and one edge of each color in $C$. Formally, it is given by
  \begin{align*}
    V_1(\rho_C) & \df \{1\}, &
    V_2(\rho_C) & \df C, &
    E(\rho_C) & \df \{1\}\times C,
  \end{align*}
  and $c_{\rho_C}\function{E(\rho_C)}{C}$ is defined by
  \begin{align*}
    c_{\rho_C}(1,i) & \df i \qquad (i\in C).
  \end{align*}
  Given a colored fractional bigraph $h$ with $e(h) > 0$, we let $\rho_h\df \rho_{C_h}^{\vec{p}}$, where
  $\vec{p}=(p_i)_{i\in C_h}$ is given by $p_i\df e_i(h)/e(h)$. We extend this definition to right-uniform
  colored bigraphs $H$ by letting $\rho_H\df\rho_{h_H}$.

  A colored fractional bigraph $h$ with $e(h) > 0$ is called \emph{color-Sidorenko} if for every sequence $W =
  (W_i)_{i\in C_h}$ of bigraphons over the same spaces we have
  \begin{align*}
    t(h,W) & \geq t(\rho_h,W)^{e(h)}.
  \end{align*}
  
  A right-uniform colored bigraph $H$ is called \emph{color-Sidorenko} if its corresponding colored fractional
  bigraph $h_H$ is color-Sidorenko.
\end{definition}

We start by showing that right-uniform left-weakly \Holder\ bigraphs $H$ are left-color regular and
color-Sidorenko.

\begin{lemma}\label{lem:leftweakHolder->colorSid}
  If $H=(G,c)$ is a non-trivial left-weakly \Holder\ bigraph without isolated vertices, then $H$ is
  left-color-regular. Furthermore, if $H$ is also right-uniform, then $H$ is color-Sidorenko.
\end{lemma}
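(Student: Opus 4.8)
The plan is to derive both assertions by plugging carefully chosen left-colorings and bigraphon sequences into the defining inequality of the left-weak \Holder\ property, and then pinning down the relevant exponents — either by the scaling trick behind Lemma~\ref{lem:obstacle} or by Jensen's inequality. For left-color-regularity, fix a color $i_0\in C_H$ and a left vertex $v_0\in V_1(G)$ and take the left-coloring $\ell\function{V_1(G)}{\{0,1\}}$ with $\ell(v)\df\One[v=v_0]$. Given an arbitrary bigraphon $W$, apply the left-weak \Holder\ property to the sequence $W'=(W'_{s,i})_{(s,i)\in\{0,1\}\times C_H}$ defined by $W'_{0,i}\equiv 1$ for all $i$, $W'_{1,i_0}\df W$, and $W'_{1,i}\equiv 1$ for $i\neq i_0$. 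Unwinding the definitions, only the color-$i_0$ edges incident to $v_0$ carry a non-constant weight, so $t((G,\ell\otimes c),W')=t(K_{1,d_{H,i_0}(v_0)},W)$; on the right-hand side the factor indexed by $v_0$ is $t((G,\ell(v_0)\otimes c),W')=t(G^{(i_0)},W)$, where $G^{(i_0)}$ is $G$ with only its color-$i_0$ edges kept, so $e(G^{(i_0)})=e_{i_0}(H)$, while every other factor equals $1$. Thus $t(K_{1,d_{H,i_0}(v_0)},W)\le t(G^{(i_0)},W)^{1/v_1(G)}$ for every $W$; taking $W$ to be the constant bigraphon $\lambda$ gives $\lambda^{d_{H,i_0}(v_0)}\le\lambda^{e_{i_0}(H)/v_1(G)}$ for all $\lambda>0$ (equivalently, invoke Lemma~\ref{lem:obstacle} with the constant bigraphon $1$), which forces $d_{H,i_0}(v_0)=e_{i_0}(H)/v_1(G)$. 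As $i_0$ and $v_0$ were arbitrary, $H$ is left-color-regular.

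For the color-Sidorenko assertion, assume in addition that $H$ is right-uniform and fix any $v_0\in V_1(G)$; since $H$ has no isolated vertices, the first part gives $D\df d_G(v_0)=e(G)/v_1(G)\ge 1$. Keep the same left-coloring $\ell$, but for a given sequence $W=(W_i)_{i\in C_H}$ use the sequence $W'$ with $W'_{0,i}\equiv 1$ and $W'_{1,i}\df W_i$. One computes $t((G,\ell\otimes c),W')=\int_X\prod_{i\in C_H}t(K_{1,1}^L,W_i)(x)^{d_{H,i}(v_0)}\,d\mu(x)$; writing $p_i\df e_i(H)/e(H)$, left-color-regularity gives $d_{H,i}(v_0)=p_iD$, so this integral equals $\int_X\bigl(\prod_i t(K_{1,1}^L,W_i)(x)^{p_i}\bigr)^{D}\,d\mu(x)$, which by Jensen's inequality (convexity of $t\mapsto t^{D}$, using $D\ge 1$) is at least $\bigl(\int_X\prod_i t(K_{1,1}^L,W_i)(x)^{p_i}\,d\mu(x)\bigr)^{D}$; unwinding the definition of $\rho_H=\rho_{h_H}$ identifies this last quantity as $t(\rho_H,W)^{D}$. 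Meanwhile the right-hand side of the left-weak \Holder\ inequality collapses to $t((G,\ell(v_0)\otimes c),W')^{1/v_1(G)}=t(H,W)^{1/v_1(G)}=t(h_H,W)^{1/v_1(G)}$ (right-uniformity is what gives $t(H,W)=t(h_H,W)$), every other factor being $1$. Combining, $t(\rho_H,W)^{D}\le t(h_H,W)^{1/v_1(G)}$, that is $t(h_H,W)\ge t(\rho_H,W)^{D\,v_1(G)}=t(\rho_H,W)^{e(h_H)}$ since $e(h_H)=e(H)=D\,v_1(G)$; as $W$ was arbitrary, $h_H$ — and hence $H$ — is color-Sidorenko.

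Both arguments are essentially computational, so the only real care is the bookkeeping translating between colored-bigraph densities $t((G,\ell\otimes c),W')$ and the colored-fractional-bigraph formalism: verifying the two unwindings $t((G,\ell\otimes c),W')=\int_X\prod_i t(K_{1,1}^L,W_i)(x)^{d_{H,i}(v_0)}\,d\mu(x)$ and $t(\rho_H,W)=\int_X\prod_i t(K_{1,1}^L,W_i)(x)^{p_i}\,d\mu(x)$, and keeping track of which exponent feeds which tool — $d_{H,i_0}(v_0)$ is just a fixed real for the scaling step, whereas $D$ must be $\ge 1$ for the Jensen step, and this last point is the one place the no-isolated-vertices hypothesis does real work (beyond merely making $h_H$ well defined).
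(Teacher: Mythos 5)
Your proof is correct and follows essentially the same route as the paper's: fix a left vertex $v_0$, use the indicator left-coloring $\ell(v)=\One[v=v_0]$ with a suitably padded sequence of bigraphons, obtain left-color-regularity via the scaling argument (Lemma~\ref{lem:obstacle}, or equivalently your explicit $W=\lambda$ computation), and then combine Jensen's inequality with the left-weak \Holder\ inequality to get the color-Sidorenko bound $t(H,W)\geq t(\rho_H,W)^{e(H)}$. The only cosmetic differences are that the paper first derives the colored inequality $t(H',W)\le t(H,W)^{1/v_1(H)}$ and then specializes to a single color while you specialize immediately, and the paper applies Jensen after raising to the $v_1(H)$-th power while you apply it before and raise afterwards — both give the same chain.
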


\begin{proof}
  Fix $v_0\in V_1(H)$ and a sequence $W = (W_i)_{i\in C_H}$ of bigraphons on the same spaces $\Omega=(X,\mu)$
  and $\Lambda=(Y,\nu)$. Let $\ell\function{V_1(H)}{\{0,1\}}$ be given by $\ell(v)\df \One[v = v_0]$ and
  define the sequence $W' = (W'_{t,i})_{t\in\{0,1\},i\in C_H}$ by
  \begin{align*}
    W'_{t,i} & \df
    \begin{dcases*}
      W_i, & if $t = 1$,\\
      1, & if $t = 0$.
    \end{dcases*}
  \end{align*}
  Then we have
  \begin{align*}
    t((G,0\otimes c),W') & = 1, &
    t((G,1\otimes c),W') & = t(H,W).
  \end{align*}

  Let further $H'\df H\rest_{v_0\cup N_H(v_0)}$ be the restriction of $H$ to $v_0$ and its neighbors and note
  that $t((G,\ell\otimes c),W') = t(H',W)$, so the left-weak \Holder\ property gives $t(H',W) \leq
  t(H,W)^{1/v_1(H)}$, which can be rewritten as
  \begin{align*}
    t(H,W) & \geq t(H',W)^{v_1(H)}.
  \end{align*}

  If we instantiate the above to the case where $W_{i_0} = \widehat{W}$ for some fixed $i_0\in C_H$ and $W_i = 1$
  every $i\neq i_0$, we get
  \begin{align*}
    t(H_{\{i_0\}},\widehat{W}) & \geq t(K_{1,d_{H,i_0}(v_0)},\widehat{W})^{v_1(H)},
  \end{align*}
  which by Lemma~\ref{lem:obstacle} implies that $e_{i_0}(H) = d_{H,i_0}(v_0)\cdot v_1(H)$, that is, $H$ is
  left-color-regular.

  Going back to the general sequence $W$, if $H$ is also right-uniform, then we conclude that
  \begin{align*}
    t(H,W)
    & \geq
    t(H',W)^{v_1(H)}
    \\
    & =
    \left(\int_X \prod_{i\in C_H} \left(\int_Y W_i(x,y)\ d\nu(y)\right)^{d_{H,i}(v_0)}\ d\mu(x)\right)^{v_1(H)}
    \\
    & \geq
    t(\rho_H,W)^{e(H)},
  \end{align*}
  where the last inequality follows from Jensen's Inequality for the convex function $z\mapsto
  z^{e(H)/v_1(H)}$ and the fact that $H$ is left-color-regular. Therefore $H$ is color-Sidorenko.
\end{proof}

Our next objective is to prove that color-powers $H^{\vec{p}}$ of left-weakly \Holder\ bigraphs are also
color-Sidorenko under the further assumptions that $H$ is right-uniform and $p_i\geq 1$ for every $i\in
C_H$. To do so we need to establish several lemmas. We start with one that says that it is sufficient to check
the color-Sidorenko property only for sequences of bigraphons in which all but one are left-regular. The trick
employed here is similar to the one in~\cite{CR21}, except that since $h$ is color-regular, we can perform a
much simpler construction.

\begin{lemma}\label{lem:colorregularcolorSid}
  Let $h$ be a color-regular colored fractional bigraph and let $i_0\in C_h$ be such that $e_{i_0}(h)\neq
  0$. Suppose that $t(h,W)\geq t(\rho_h,W)^{e(h)}$ for every sequence $W = (W_i)_{i\in C_h}$ of positive
  bigraphons over the same spaces such that $W_i$ is left-regular and non-zero for every $i\in
  C_h\setminus\{i_0\}$. Then $h$ is color-Sidorenko.
\end{lemma}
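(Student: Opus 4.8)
The plan is to reduce the general color-Sidorenko inequality to the case where only $W_{i_0}$ is arbitrary and all other bigraphons are left-regular, by ``regularizing'' each $W_i$ with $i\neq i_0$ without changing the relevant quantities. First I would recall that $t(h,W)$ is, by definition, an integral over $X^{V_h}$ of a product of factors of the form $t(K^L_{|U|,1},W_i)(x_U)^{h(U,i)}$, and that each such factor depends on $W_i$ only through the function $x\mapsto t(e_1,W_i)(x) = \int_Y W_i(x,y)\,d\nu(y)$ when $|U|=1$, and more generally through the ``left-degree function'' of $W_i$. The key construction is: given a sequence $W=(W_i)_{i\in C_h}$ with all $W_i$ positive, replace each $W_i$ for $i\neq i_0$ by a left-regular bigraphon $\widetilde W_i$ obtained by composing with a measure-preserving rearrangement (or more directly, by using that $t(K^L_{k,1},W_i)(x_U) = \prod_{v\in U} t(e_1,W_i)(x_v)$ already factors, so we only need to control the one-dimensional marginal $g_i(x)\df t(e_1,W_i)(x)$).

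Concretely, for $i\neq i_0$, I would build $\widetilde W_i$ on modified spaces so that $t(e_1,\widetilde W_i)$ is constant equal to $t(\rho,W_i)$ while $t(\rho,\widetilde W_i)=t(\rho,W_i)$; since $h$ is color-regular, every left vertex $v\in V_h$ receives total $i$-degree $d_{h,i}(v)=e_i(h)/v(h)$, so the contribution of color $i$ to $t(h,W)$ is exactly $\bigl(\int_X g_i(x)^{e_i(h)/v(h)}\,d\mu\bigr)^{v(h)}$ times the entanglement with the other colors through the shared variables $x\in X^{V_h}$. The subtlety is that the colors are coupled through these shared left-variables, so I cannot simply replace marginals independently. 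The right move is to take a product space: work on $\widehat\Omega\df\Omega\times\prod_{i\neq i_0}\Omega_i$ and lift $W_{i_0}$ to depend only on the first coordinate while each $W_i$ ($i\neq i_0$) is lifted to depend only on the $\Omega_i$ coordinate; then the lifted $W_i$ for $i\neq i_0$ automatically becomes left-regular on $\widehat\Omega$ after averaging out its own coordinate, because its left-marginal no longer varies with the first coordinate. One checks that $t(h,\cdot)$ and $t(\rho_h,\cdot)$ are unchanged by this lift (using color-regularity to see that the exponents match up so that the Fubini factorization goes through), and similarly $e(h)$, $e_i(h)$ are intrinsic to $h$.

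After this reduction, the hypothesis applies directly to the lifted sequence, giving $t(h,W) = t(h,\widehat W)\geq t(\rho_h,\widehat W)^{e(h)} = t(\rho_h,W)^{e(h)}$, as desired; a limiting argument (or a direct approximation) removes the positivity assumption on $W_{i_0}$ and handles $W_i$ that are zero for $i\neq i_0$ (if some $W_i\equiv 0$ with $e_i(h)>0$, both sides are zero). The main obstacle I anticipate is bookkeeping the exponents correctly when verifying that the product-space lift leaves $t(h,W)$ and $t(\rho_h,W)^{e(h)}$ invariant: one must use color-regularity of $h$ in an essential way (so that the power to which each lifted $W_i$'s own-coordinate integral is raised is precisely $e_i(h)$, allowing it to recombine into $t(\rho_h,\cdot)$ on the right-hand side and to match the definition of $t(h,\cdot)$ on the left), and one must be careful that the ``only one arbitrary bigraphon'' structure is genuinely achieved — i.e.\ that after the lift every $W_i$ with $i\neq i_0$ is left-regular with respect to $\widehat\Omega$, which is where non-zeroness of those $W_i$ is used to stay within the class of bigraphons for which the hypothesis is stated.
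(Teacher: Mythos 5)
Your plan starts with the right intent (reduce to a sequence where all $W_i$ with $i\neq i_0$ are left-regular while keeping $t(h,W)$ and $t(\rho_h,W)$ unchanged), and you even flag the right obstruction (``the colors are coupled through these shared left-variables''), but the product-space lift you propose does not actually overcome it. If you set $\widehat X = X\times\prod_{i\neq i_0}X_i$ and lift $W_i$ ($i\neq i_0$) to depend only on the $X_i$-coordinate, then the coordinates on $\widehat X^{V_h}$ that different colors ``see'' become independent under $\widehat\mu^{V_h}$, and the integral $t(h,\widehat W)$ factorizes as a \emph{product over colors}. The original $t(h,W)$ does not factorize in this way --- the factors for different colors share the same $x_v$ variables --- so $t(h,\widehat W)\neq t(h,W)$ in general. (For a toy example, take $v(h)=1$ with $h(\{v\},1)=h(\{v\},2)=1$: then $t(h,W)=\int_X t(e_1,W_1)(x)\,t(e_1,W_2)(x)\,d\mu(x)$, whereas the lift gives $t(\rho,W_1)\,t(\rho,W_2)$, and these differ whenever the two left-degree functions are correlated.) The same problem afflicts $t(\rho_h,\cdot)$. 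Also, the lifted $\widehat W_i$ is not left-regular as constructed, since $t(e_1,\widehat W_i)(\widehat x)$ still varies with the $X_i$-coordinate; ``averaging out its own coordinate'' would change the bigraphon (and $t(h,\cdot)$) again.

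The mechanism that actually works, and that the paper uses, is a multiplicative reparameterization on the \emph{same} underlying spaces: for $i\neq i_0$ set $W'_i(x,y)\df W_i(x,y)/t(e_1,W_i)(x)$ (so $W'_i$ is left-regular), and compensate by multiplying $W_{i_0}$ by $\prod_{j\neq i_0}t(e_1,W_j)(x)^{e_j(h)/e_{i_0}(h)}$. Because $t(K^L_{|U|,1},W_i)(x_U)=\prod_{u\in U}\int_Y W_i(x_u,y)\,d\nu(y)$ and $W'_i$ divides out exactly one factor of $t(e_1,W_i)(x_u)$ per incidence, each left vertex $v$ contributes $t(e_1,W_i)(x_v)^{-d_{h,i}(v)}$; the compensating factor in $W'_{i_0}$ contributes $t(e_1,W_j)(x_v)^{d_{h,i_0}(v)\cdot e_j(h)/e_{i_0}(h)}$; and color-regularity ($d_{h,i}(v)=e_i(h)/v(h)$) makes these cancel exactly. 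One then checks $t(\rho_h,W')=t(\rho_h,W)$ by a similar cancellation. This keeps the colors entangled through the shared left variables, which is exactly what your lift loses. Your approximation/limiting step at the end and your identification that color-regularity must make the exponents balance are both correct and needed; the missing idea is the specific in-place renormalization rather than a product-space decoupling.
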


\begin{proof}
  Without loss of generality, we may suppose that $e_i(h)\neq 0$ for every $i\in C_h$.

  By possibly replacing each $W_i$ with $W_i^\epsilon\df \epsilon + W_i$ and applying the Dominated
  Convergence Theorem letting $\epsilon\to 0$, it is sufficient to show that $t(h,W)\geq t(\rho_h,W)^{e(h)}$
  holds for every sequence of bigraphons over the same spaces that are bounded away from $0$. Fix one such
  sequence $W$ of bigraphons over spaces $\Omega=(X,\mu)$ and $\Lambda=(Y,\nu)$ and define the sequence $W' =
  (W'_i)_{i\in C_h}$ by
  \begin{align*}
    W'_i(x,y) & \df
    \begin{dcases*}
      \frac{W_i(x,y)}{t(e_1,W_i)(x)}, & if $i\neq i_0$,\\
      W_{i_0}(x,y)\cdot\prod_{j\in C_h\setminus\{i_0\}} t(e_1,W_j)(x)^{e_j(h)/e_{i_0}(h)}, & if $i = i_0$.
    \end{dcases*}
  \end{align*}
  Note that the fact that each $W_i$ is bounded away from $0$ ensures that the functions above are bounded. It
  is also trivial that $W'_i$ is left-regular for each $i\neq i_0$.

  Note now that
  \begin{align*}
    t(\rho_h,W')
    & =
    \begin{multlined}[t]
      \int_X
      \left(t(e_1,W_{i_0})(x)\cdot
      \prod_{j\in C_h\setminus\{i_0\}} t(e_1,W_j)(x)^{e_j(h)/e_{i_0}(h)}\right)^{e_{i_0}(h)/e(h)}
      \\
      \cdot
      \prod_{i\in C_h\setminus\{i_0\}} \left(\frac{t(e_1,W_i)}{t(e_1,W_i)}\right)^{e_i(h)/e(h)}
      \ d\mu(x)
    \end{multlined}
    \\
    & =
    t(\rho_h,W).
  \end{align*}

  On the other hand, since $h$ is color-regular, we have $d_{h,i}(v) = e_i(h)/v(h)$ for every $i\in C_h$ and
  every $v\in V_h$, so we get
  \begin{align*}
    t(h,W')
    & =
    \begin{multlined}[t]
      \int_{X^{V_h}}
      \prod_{U\subseteq V_h} \prod_{i\in C_h} t(K_{\lvert U\rvert,1}^L, W_i)(x_U)^{h(U,i)}
      \\
      \cdot
      \prod_{v\in V_h}
      \prod_{i\in C_h\setminus\{i_0\}} t(e_1,W_i)(x)^{e_i(h)\cdot d_{h,i_0}(v) /e_{i_0}(h) - d_{h,i}(v)}
    \end{multlined}
    \\
    & = t(h,W)
  \end{align*}
  Therefore, we conclude that
  \begin{align*}
    t(h,W) & = t(h,W') \geq t(\rho_h,W')^{e(h)} = t(\rho_h,W)^{e(h)},
  \end{align*}
  so $h$ is color-Sidorenko.
\end{proof}

The next lemma can be seen as a color version of the induced-Sidorenko property for right-uniform left-weakly
\Holder\ bigraphs.

\begin{lemma}\label{lem:colorrestriction}
  Let $H = (G,c)$ be a right-uniform left-weakly \Holder\ bigraph, let $C\subseteq C_H$ be a subset of
  colors. Let also $W = (W_i)_{i\in C_H}$ be a sequence of bigraphons on the same space such that for each
  $i\in C_H\setminus C$, the bigraphon $W_i$ is left-regular and non-zero. Then
  \begin{align*}
    t(H_C,W) & \leq \frac{t(H,W)}{\prod_{i\in C_H\setminus C} t(\rho,W_i)^{e_i(H)}}.
  \end{align*}
\end{lemma}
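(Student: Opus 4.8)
The plan is to reduce to the special case in which a \emph{single} colour is removed, and to handle that case via the left-weak \Holder\ property of $H$ applied to a carefully frozen sequence of bigraphons. So the first step is to prove: if $H$ is right-uniform and left-weakly \Holder, $i_0\in C_H$, and $W_{i_0}$ is left-regular and non-zero, then $t(H_{C_H\setminus\{i_0\}},W)\le t(H,W)/t(\rho,W_{i_0})^{e_{i_0}(H)}$. Granting this, the lemma follows by induction on $\lvert C_H\setminus C\rvert$: the base case $C=C_H$ is trivial, and for the inductive step one picks $i_0\in C_H\setminus C$, applies the induction hypothesis to $(H,C\cup\{i_0\})$ and then the single-colour case to $H_{C\cup\{i_0\}}$, which is again right-uniform and left-weakly \Holder\ by Remark~\ref{rmk:leftweakHoldersubcolor} and satisfies $e_{i_0}(H_{C\cup\{i_0\}})=e_{i_0}(H)$ and $(H_{C\cup\{i_0\}})_{C}=H_C$; the exponents telescope.

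For the single-colour case I would assume $V_1(H)\ne\varnothing$ (otherwise $H$ has no edges and the statement is trivial), fix a vertex $v_0\in V_1(H)$, write $n\df v_1(H)$, let $\ell\df\One[\place=v_0]\function{V_1(H)}{\{0,1\}}$, and introduce the sequence $\widehat W=(\widehat W_{(s,i)})_{s\in\{0,1\},\,i\in C_H}$ defined by $\widehat W_{(1,i)}\df W_i$ for every $i$, $\widehat W_{(0,i)}\df W_i$ for $i\ne i_0$, and $\widehat W_{(0,i_0)}$ the constant bigraphon equal to $t(\rho,W_{i_0})$. The design of $\widehat W$ is such that, grouping the edges of $H$ by their right endpoint (possible since $H$ is right-uniform) and using both that a constant bigraphon $a$ satisfies $t(K_{d,1}^L,a)\equiv a^{d}$ and that left-regularity of $W_{i_0}$ gives $t(e_1,W_{i_0})(x_{v_0})=t(\rho,W_{i_0})$ for a.e.\ $x_{v_0}$ --- so that an $i_0$-coloured right vertex of degree $d$ contributes the factor $t(\rho,W_{i_0})^{d}$ whether or not it is adjacent to $v_0$ --- one obtains
\begin{align*}
  t((G(H),\ell\otimes c_H),\widehat W)
  & =
  t((G(H),0\otimes c_H),\widehat W)
  =
  t(\rho,W_{i_0})^{e_{i_0}(H)}\cdot t(H_{C_H\setminus\{i_0\}},W),
  \\
  t((G(H),1\otimes c_H),\widehat W)
  & =
  t(H,W).
\end{align*}
Since $\ell(v_0)=1$ and $\ell(v)=0$ for the remaining $n-1$ left vertices, the left-weak \Holder\ inequality for $H$ (with auxiliary colour set $\{0,1\}$) reads $A\le t(H,W)^{1/n}\cdot A^{(n-1)/n}$, where $A$ denotes the common value on the first line above; therefore $A\le t(H,W)$ (this is trivial if $A=0$, and otherwise one divides by $A^{(n-1)/n}$). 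As $W_{i_0}$ is non-zero, $t(\rho,W_{i_0})>0$, so dividing through by $t(\rho,W_{i_0})^{e_{i_0}(H)}$ gives the single-colour bound.

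The only real computation is the first displayed equality --- in particular the assertion that, after freezing $W_{i_0}$ to its average everywhere except along the edges incident to $v_0$, each $i_0$-coloured right vertex still contributes exactly $t(\rho,W_{i_0})^{d}$; this is where left-regularity enters and is immediate once edges are grouped by right endpoint. It is also the reason the indicator left-coloring $\One[\place=v_0]$ is the right test coloring: it makes the ``mixed'' value on the left of the left-weak \Holder\ inequality coincide with the all-zeros value, which is what lets one conclude $A\le t(H,W)$ rather than merely a weighted geometric-mean bound. Everything else --- the induction over colours and the identity $t(K_{d,1}^L,a)\equiv a^{d}$ for constant bigraphons --- is routine, and I do not anticipate any further obstacle.
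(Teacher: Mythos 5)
Your proof is correct, and the underlying mechanism is the same one the paper uses --- the indicator left-coloring $\ell=\One[\place=v_0]$, a doctored bigraphon sequence in the $0$-slot for the colors being removed, and the left-weak \Holder\ inequality --- but you package it differently in two respects. You peel off one color at a time and induct, whereas the paper treats all of $C_H\setminus C$ simultaneously. More substantively, you freeze $\widehat W_{(0,i_0)}$ to the constant $t(\rho,W_{i_0})$ rather than to $1$; this makes the mixed value $t((G,\ell\otimes c_H),\widehat W)$ \emph{equal} the all-zeros value $t((G,0\otimes c_H),\widehat W)$, so the \Holder\ inequality collapses to $A\le t(H,W)$ with no exponent bookkeeping. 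The paper's choice of the constant $1$ instead produces a discrepancy of $\prod_{i\in C_H\setminus C}t(\rho,W_i)^{d_{H,i}(v_0)}$ between the mixed and all-zeros values, and it then has to invoke the left-color-regularity $d_{H,i}(v_0)=e_i(H)/v_1(H)$ from Lemma~\ref{lem:leftweakHolder->colorSid} to convert $v_1(H)\cdot d_{H,i}(v_0)$ into $e_i(H)$ after raising to the $v_1(H)$th power. Your variant sidesteps that dependency, at the modest cost of an induction. Both arguments use right-uniformity and left-regularity of $W_{i_0}$ in exactly the same place: to evaluate the contribution of the $i_0$-colored right vertices incident to $v_0$.
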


\begin{proof}
  Without loss of generality, let us suppose that $H$ does not have isolated vertices and is not trivial.
  
  Fix a vertex $v_0\in V_1(H)$, let $\ell\function{V_1(G)}{\{0,1\}}$ be the left-coloring of $G$ defined by
  $\ell(v)\df \One[v = v_0]$ and let $W' = (W'_{t,i})_{t\in\{0,1\},i\in C_H}$ be given by
  \begin{align*}
    W'_{t,i} & \df
    \begin{dcases*}
      1, & if $t = 0$ and $i\notin C$,\\
      W_i, & if $t = 1$ or $i\in C$.
    \end{dcases*}
  \end{align*}
  Then we have
  \begin{align*}
    t((G,0\otimes c),W') & = t(H_C,W), &
    t((G,1\otimes c),W') & = t(H,W).
  \end{align*}

  Let now $H'=(G',c')$ be the colored bigraph obtained from $G$ by removing all edges with colors in
  $C_H\setminus C$ that are not adjacent to $v_0$, that is, we have $G'\df G - \{(v,w)\in c^{-1}(C_H\setminus
  C) \mid v\neq v_0\}$ and $c'\df c\rest_{E(G')}$. Note that $t((G,\ell\otimes c),W') = t(H',W)$. Note also
  that since $H$ is right-uniform, all edges $(v,w)$ in $E(H')\setminus E(H_C)$ satisfy $v = v_0$, $d_{G'}(w)
  = 1$ and $c(v,w)\in C_H\setminus C$, so since each $W_i$ with $i\in C_H\setminus C$ is left-regular, we get
  \begin{align*}
    t(H',W)
    & =
    t(H_C,W)\cdot \prod_{i\in C_H\setminus C} t(\rho,W_i)^{d_{H,i}(v)}.
  \end{align*}

  Recalling from Lemma~\ref{lem:leftweakHolder->colorSid} that $d_{H,i}(v) = e_i(H)/v_1(H)$, the left-weak
  \Holder\ property gives
  \begin{align*}
    t(H_C,W)\cdot\prod_{i\in C_H\setminus C} t(\rho,W_i)^{e_i(H)/v_1(H)}
    & =
    t(H',W)
    =
    t((G,\ell\otimes c),W')
    \\
    & \leq
    t((G,0\otimes c),W')^{(v_1(H) - 1)/v_1(H)}
    \cdot
    t((G,1\otimes c),W')^{1/v_1(H)}
    \\
    & =
    t(H_C,W)^{(v_1(H) - 1)/v_1(H)}
    \cdot
    t(H,W)^{1/v_1(H)},
  \end{align*}
  so the result follows by taking the $v_1(H)$th power.
\end{proof}

We now prove an inductive form of Jensen's inequality for moments (this can also be seen as inductive form of
\Holder's Inequality).

\begin{proposition}\label{prop:inductiveJensen}
  Let $p_1\geq p_2\geq\cdots\geq p_n\geq 1$ and let $f_1,\ldots,f_n,g\function{\Omega}{\RR_+}$ be bounded
  positive measurable functions in a probability space $\Omega=(X,\mu)$. Then
  \begin{align*}
    \int_X g(x)\cdot\prod_{i=1}^n f_i(x)^{p_i}\ d\mu(x)
    & \geq
    \frac{%
      \left(\int_X g(x)\cdot\prod_{i=1}^n f_i(x)\ d\mu(x)\right)^{p_1}
    }{%
      \prod_{i=1}^n
      \left(\int_X g(x)\cdot\prod_{j=i+1}^n f_j(x)\ d\mu(x)\right)^{p_i-p_{i+1}}
    },
  \end{align*}
  where $p_{n+1}\df 1$ and products of the form $\prod_{j=t}^{t-1}$ are interpreted as $1$.
\end{proposition}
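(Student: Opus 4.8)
The plan is to obtain the entire inequality from one application of the generalized \Holder\ inequality, after splitting the integrand $g\prod_{i=1}^n f_i$ of the right-hand numerator into $n+1$ carefully chosen factors whose $L^{r}$-norms (for appropriate exponents $r$) are exactly the integrals $\int_X g\prod_j f_j^{p_j}\,d\mu$ and $\int_X g\prod_{j=i+1}^n f_j\,d\mu$ that appear on the right-hand side.

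Concretely, I would set $r_0\df p_1$ and, for each $i\in[n]$ with $p_i>p_{i+1}$, set $r_i\df p_1/(p_i-p_{i+1})$ (when $p_i=p_{i+1}$ the $i$-th factor below is identically $1$, so that index is simply omitted). From $p_1\geq\cdots\geq p_n\geq 1$ we get $0< p_i-p_{i+1}\leq p_i-1<p_1$, so every $r_\alpha\geq 1$, while a telescoping sum yields $\sum_\alpha 1/r_\alpha=1/p_1+\sum_{i=1}^n(p_i-p_{i+1})/p_1=1/p_1+(p_1-1)/p_1=1$. Next I would introduce the bounded positive measurable functions
\begin{align*}
  h_0 & \df g^{1/p_1}\prod_{j=1}^n f_j^{p_j/p_1}, &
  h_i & \df \left(g\prod_{j=i+1}^n f_j\right)^{(p_i-p_{i+1})/p_1}\quad(i\in[n]).
\end{align*}
These are tailored so that $h_0^{r_0}=g\prod_j f_j^{p_j}$ and $h_i^{r_i}=g\prod_{j=i+1}^n f_j$ (so their integrals are precisely the ``bases'' on the right-hand side), while $\prod_{\alpha=0}^n h_\alpha=g\prod_{i=1}^n f_i$: the exponent of $g$ in that product is $1/p_1+\sum_{i=1}^n(p_i-p_{i+1})/p_1=1$, and for each $j\in[n]$ the exponent of $f_j$ is $p_j/p_1+\sum_{i=1}^{j-1}(p_i-p_{i+1})/p_1=p_j/p_1+(p_1-p_j)/p_1=1$.

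With the decomposition in place, the generalized \Holder\ inequality (valid since $\sum_\alpha 1/r_\alpha=1$ and every $r_\alpha\geq 1$) gives
\begin{align*}
  \int_X g\prod_{i=1}^n f_i\ d\mu
  & =
  \int_X \prod_{\alpha=0}^n h_\alpha\ d\mu
  \\
  & \leq
  \prod_{\alpha=0}^n\left(\int_X h_\alpha^{r_\alpha}\ d\mu\right)^{1/r_\alpha}
  \\
  & =
  \left(\int_X g\prod_j f_j^{p_j}\ d\mu\right)^{1/p_1}
  \prod_{i=1}^n\left(\int_X g\prod_{j=i+1}^n f_j\ d\mu\right)^{(p_i-p_{i+1})/p_1},
\end{align*}
where all integrals are strictly positive because $g$ and the $f_j$ are positive. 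Raising both sides to the power $p_1\geq 1$ and dividing through by $\prod_{i=1}^n(\int_X g\prod_{j=i+1}^n f_j\,d\mu)^{p_i-p_{i+1}}$ rearranges exactly into the claimed inequality, where the indices $i$ with $p_i=p_{i+1}$ that were dropped above contribute the trivial factor $1$.

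I do not expect a genuine obstacle: the only non-mechanical step is guessing the factorization of $g\prod_i f_i$, and once $r_\alpha$ and $h_\alpha$ are written down, the checks ($r_\alpha\geq 1$, $\sum_\alpha 1/r_\alpha=1$, $\prod_\alpha h_\alpha=g\prod_i f_i$, and $h_\alpha^{r_\alpha}$ being the right integrand) are all one-line exponent computations. As a sanity check, for $n=1$ this specializes to $\int_X g f_1^{p_1}\geq(\int_X gf_1)^{p_1}/(\int_X g)^{p_1-1}$, which is Jensen's inequality for $z\mapsto z^{p_1}$ with respect to the probability measure $g\,d\mu/\int_X g\,d\mu$. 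One could alternatively run an induction on $n$, peeling off one $f_i$ at a time via this one-variable case, but the direct \Holder\ argument avoids the wrong-direction estimates on the denominator terms that such a peeling would force.
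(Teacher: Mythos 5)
Your proof is correct but takes a genuinely different route from the paper's. The paper proves the proposition by induction on $n$: the base case $n=1$ is Jensen's inequality for the convex function $z\mapsto z^{p_1}$ with respect to the probability measure proportional to $g\,d\mu$, and for $n\geq 2$ it first applies that one-variable case with exponent $p_n$ to the bundled function $\prod_i f_i^{p_i/p_n}$ (producing the denominator factor $(\int g)^{p_n-p_{n+1}}$), and then applies the inductive hypothesis to the resulting numerator with weight $g\cdot f_n$ and exponents $p_1/p_n\geq\cdots\geq p_{n-1}/p_n$. Your proof instead factors $g\prod_{i=1}^n f_i$ into $n+1$ pieces $h_\alpha$ and applies the generalized \Holder\ inequality once, with exponents $r_0=p_1$ and $r_i=p_1/(p_i-p_{i+1})$; your exponent checks ($\sum_\alpha 1/r_\alpha=1$ by telescoping, $r_\alpha\geq 1$, $\prod_\alpha h_\alpha=g\prod_i f_i$, and $h_\alpha^{r_\alpha}$ matching the intended integrands) are all correct, and positivity of $g,f_j$ makes the rearrangement valid. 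Each approach has something to recommend it: the paper's induction is entirely self-contained (needing only one-variable Jensen) and its peeling-from-the-smallest-exponent is careful enough that no wrong-direction bound ever arises in the denominator, contrary to the worry you express at the end; your argument is shorter, non-inductive, and makes the extremal structure transparent by exhibiting the exact \Holder\ split, at the modest cost of invoking the multi-factor \Holder\ inequality as a black box. Both are fine proofs of the same statement.
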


\begin{proof}
  The proof is by induction in $n$. For $n=0$, the result is trivial. The case $n=1$ follows directly from
  Jensen's Inequality for the convex function $z\mapsto z^{p_1}$.

  Suppose then that $n\geq 2$ and that the result holds for $n-1$. Then by the $n = 1$ case with the same $g$
  but taking the whole product as a single function and using the exponent $p_n$, we have
  \begin{align*}
    \int_X g(x)\cdot\prod_{i=1}^n f_i(x)^{p_i}\ d\mu(x)
    & \geq
    \frac{%
      \left(\int_X g(x)\cdot\prod_{i=1}^n f_i(x)^{p_i/p_n}\ d\mu(x)\right)^{p_n}
    }{%
      \left(\int_X g(x)\ d\mu(x)\right)^{p_n - p_{n+1}}
    }.
  \end{align*}

  The result now follows by applying the inductive hypothesis to the integral in the numerator above using
  $g\cdot f_n$ in place of $g$ and exponents $p_1/p_n\geq\cdots\geq p_{n-1}/p_n$ for the functions
  $f_1,\ldots,f_{n-1}$, respectively.
\end{proof}

We can now prove that color-powers of right-uniform left-weakly \Holder\ bigraphs are color-Sidorenko.

\begin{lemma}\label{lem:leftweakHoldercolorpower}
  Let $H$ be a non-trivial right-uniform left-weakly \Holder\ bigraph without isolated vertices and let
  $\vec{p} = (p_i)_{i\in C_H}\in\RR_+^{C_H}$ be such that $p_i\geq 1$ for every $i\in C_H$. Then $H^{\vec{p}}$
  is color-Sidorenko.
\end{lemma}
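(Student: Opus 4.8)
The plan is to combine the inductive Jensen inequality of Proposition~\ref{prop:inductiveJensen} with the color‑restriction inequality of Lemma~\ref{lem:colorrestriction} and the color‑Sidorenko property of $H$ from Lemma~\ref{lem:leftweakHolder->colorSid}, after first reducing (via Lemma~\ref{lem:colorregularcolorSid}) to sequences of bigraphons that are left‑regular in all but one color.

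First I would clean up. By Remark~\ref{rmk:leftweakHoldersubcolor} we may discard every color $i$ with $e_i(H)=0$ (there is no right vertex of such a color, $H$ being right‑uniform without isolated vertices, so no edge is removed), which affects none of $t(H^{\vec p},W)$, $t(\rho_{H^{\vec p}},W)$, $e(H^{\vec p})$; hence I assume $e_i(H)>0$ for all $i\in C_H$, so $e(H^{\vec p})>0$. By Lemma~\ref{lem:leftweakHolder->colorSid}, $H$ is left‑color‑regular and color‑Sidorenko; left‑color‑regularity immediately gives $d_{h_H^{\vec p},i}(v)=p_i\,d_{H,i}(v)=p_i\,e_i(H)/v_1(H)=e_i(h_H^{\vec p})/v(h_H^{\vec p})$, i.e.\ $H^{\vec p}=h_H^{\vec p}$ is color‑regular. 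Next I would order $C_H=\{1,\ldots,n\}$ so that $p_1\geq p_2\geq\cdots\geq p_n\geq 1$; since $e_n(H^{\vec p})=p_ne_n(H)>0$, Lemma~\ref{lem:colorregularcolorSid} with $i_0=n$ reduces the goal to proving $t(H^{\vec p},W)\geq t(\rho_{H^{\vec p}},W)^{e(H^{\vec p})}$ for every sequence $W=(W_i)_{i\in C_H}$ of positive bigraphons with $W_j$ left‑regular and non‑zero for each $j<n$.

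With such a $W$ fixed, I would set $\Psi_i(x)\df\prod_{U\subseteq V_1(H)}t(K_{\lvert U\rvert,1}^L,W_i)(x_U)^{h_H(U,i)}$, so that $t(H^{\vec p},W)=\int_{X^{V_1(H)}}\prod_{i=1}^n\Psi_i^{p_i}\,d\mu$ and, for each $C\subseteq C_H$, $t(H_C,W)=\int_{X^{V_1(H)}}\prod_{i\in C}\Psi_i\,d\mu$ (unfolding the definitions, using right‑uniformity). Applying Proposition~\ref{prop:inductiveJensen} with $g\equiv 1$ and $f_i=\Psi_i$ turns the denominator into $\prod_{i=1}^{n-1}t(H_{\{i+1,\ldots,n\}},W)^{p_i-p_{i+1}}$ (the $i=n$ term being $1$), and for each $i\leq n-1$ the removed colors $\{1,\ldots,i\}$ are left‑regular and non‑zero, so Lemma~\ref{lem:colorrestriction} bounds $t(H_{\{i+1,\ldots,n\}},W)\leq t(H,W)/\prod_{j=1}^i t(\rho,W_j)^{e_j(H)}$. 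Substituting and collecting exponents ($\sum_{i=1}^{n-1}(p_i-p_{i+1})=p_1-p_n$) should give
\[
  t(H^{\vec p},W)\;\geq\;t(H,W)^{p_n}\prod_{j=1}^{n-1}t(\rho,W_j)^{e_j(H)(p_j-p_n)}.
\]

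It then remains to show the right side is at least $t(\rho_{H^{\vec p}},W)^{e(H^{\vec p})}$. Since $W_j$ is left‑regular for $j<n$, a direct computation from the definitions gives $t(\rho_H,W)=\big(\prod_{j<n}t(\rho,W_j)^{e_j(H)/e(H)}\big)\int_X t(e_1,W_n)(x)^{e_n(H)/e(H)}\,d\mu(x)$ and, likewise, $t(\rho_{H^{\vec p}},W)^{e(H^{\vec p})}=\prod_{j<n}t(\rho,W_j)^{p_je_j(H)}\cdot\big(\int_X t(e_1,W_n)(x)^{p_ne_n(H)/e(H^{\vec p})}\,d\mu(x)\big)^{e(H^{\vec p})}$. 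Feeding the color‑Sidorenko bound $t(H,W)\geq t(\rho_H,W)^{e(H)}$ into the display and cancelling the $t(\rho,W_j)$‑factors, the whole claim comes down to
\[
  \Big(\int_X t(e_1,W_n)^{e_n(H)/e(H)}\,d\mu\Big)^{p_ne(H)}\;\geq\;\Big(\int_X t(e_1,W_n)^{p_ne_n(H)/e(H^{\vec p})}\,d\mu\Big)^{e(H^{\vec p})}.
\]
Writing $\phi\df t(e_1,W_n)\geq 0$, $a\df e_n(H)/e(H)$ and $b\df p_ne_n(H)/e(H^{\vec p})$, one checks $a\cdot p_ne(H)=b\cdot e(H^{\vec p})=p_ne_n(H)$, so this is exactly $\lVert\phi\rVert_{L^a(\mu)}^{p_ne_n(H)}\geq\lVert\phi\rVert_{L^b(\mu)}^{p_ne_n(H)}$, which holds by monotonicity of $L^p$‑norms on a probability space since $a\geq b$ (because $e(H^{\vec p})=\sum_j p_je_j(H)\geq p_ne(H)$, as every $p_j\geq p_n$). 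The steps I expect to be routine are this last $L^p$‑monotonicity and the bookkeeping of exponents; the load‑bearing point is the interplay of Proposition~\ref{prop:inductiveJensen} with Lemma~\ref{lem:colorrestriction}, and the one subtlety to get right is the order of colors — putting the smallest exponent last — so that at every denominator term of Proposition~\ref{prop:inductiveJensen} the colors being removed are precisely the left‑regular ones, which is exactly what Lemma~\ref{lem:colorrestriction} requires.
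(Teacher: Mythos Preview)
Your proposal is correct and follows essentially the same route as the paper: the same reduction via Lemma~\ref{lem:colorregularcolorSid}, the same application of Proposition~\ref{prop:inductiveJensen} followed by Lemma~\ref{lem:colorrestriction} to reach $t(H^{\vec p},W)\geq t(H,W)^{p_n}\prod_{j<n}t(\rho,W_j)^{e_j(H)(p_j-p_n)}$, and then the color-Sidorenko property of $H$. The only cosmetic difference is in the final comparison with $t(\rho_{H^{\vec p}},W)^{e(H^{\vec p})}$: the paper bounds $t(\rho_{H^{\vec p}},W)$ directly by a \Holder\ inequality (without using left-regularity at that step), whereas you exploit left-regularity of the $W_j$ for $j<n$ to factor out constants and reduce to $L^p$-monotonicity on a probability space --- both arguments are valid and yield the same conclusion.
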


\begin{proof}
  By possibly removing unused colors and renaming them, we may assume without loss of generality that
  $\im(c_H) = C_H = [n]$ for some $n\in\NN_+$ and that $p_1\geq\cdots\geq p_n\geq 1$.

  Let $h\df H^{\vec{p}}$. By Lemma~\ref{lem:colorregularcolorSid}, it is sufficient to show that $t(h,W)\geq
  t(\rho_h,W)^{e(h)}$ only for sequences $W = (W_i)_{i=1}^n$ of positive bigraphons on the same spaces such
  that all $W_i$ are left-regular except possibly for $W_n$.

  For every $i\in[n]$, let $C_i\df\{i+1,\ldots,n\}$. We now apply Proposition~\ref{prop:inductiveJensen} to
  get
  \begin{align*}
    t(h,W)
    & =
    \int_{X^{V_1(H)}}
    \prod_{i=1}^n t(H_{\{i\}}^L,W)(x)^{p_i}
    \ d\mu(x)
    \\
    & \geq
    \frac{%
      \left(\int_{X^{V_1(H)}} \prod_{i=1}^n t(H_{\{i\}}^L,W)(x) \ d\mu(x)\right)^{p_1}
    }{%
      \prod_{i=1}^n
      \left(\int_{X^{V_1(H)}} \prod_{j=i+1}^n t(H_{\{j\}}^L,W)(x) \ d\mu(x)\right)^{p_i - p_{i+1}}
    }
    \\
    & =
    \frac{t(H,W)^{p_1}}{\prod_{i=1}^{n-1} t(H_{C_i},W)^{p_i-p_{i+1}}},
  \end{align*}
  where $p_{n+1}\df 1$ (note that the $n$th term of the final product is omitted because $t(H_{C_n},W)=1$ as
  $C_n=\varnothing$).

  Recall now that all $W_i$ except possibly for $W_n$ are left-regular and non-zero and since $n\in
  C_1\cap\cdots\cap C_{n-1}$, all $W_j$ with $j\in C_H\setminus C_i$ for some $i\in[n-1]$ are left-regular and
  non-zero, so by Lemma~\ref{lem:colorrestriction} (and recalling that $p_i\geq p_{i+1}$), we have
  \begin{equation}\label{eq:thWcolor}
    \begin{aligned}
      t(h,W)
      & \geq
      \frac{t(H,W)^{p_1}}{\prod_{i=1}^{n-1} t(H_{C_i},W)^{p_i-p_{i+1}}}
      \\
      & \geq
      t(H,W)^{p_1}\cdot
      \prod_{i=1}^{n-1}\left(\frac{\prod_{j\in C_H\setminus C_i} t(\rho,W_j)^{e_j(H)}}{t(H,W)}\right)^{p_i-p_{i+1}}
      \\
      & =
      t(H,W)^{p_n}\cdot \prod_{i=1}^n t(\rho,W_i)^{e_i(H)\cdot (p_i - p_n)}
      \\
      & \geq
      t(\rho_H,W)^{p_n\cdot e(H)}\cdot\prod_{i=1}^n t(\rho,W_i)^{e_i(H)\cdot (p_i - p_n)}
    \end{aligned}
  \end{equation}
  where the last inequality follows since $H$ is color-Sidorenko by Lemma~\ref{lem:leftweakHolder->colorSid}.

  Note now that since $p_n\cdot e(H) + \sum_{i=1}^n e_i(H)\cdot (p_i - p_n) = e(h)$, by \Holder's Inequality,
  we have
  \begin{align*}
    t(\rho_h,W)
    & =
    \int_X \prod_{i=1}^n t(e_1,W_i)^{p_i\cdot e_i(H) / e(h)}\ d\mu(x)
    \\
    & =
    \int_X \left(\prod_{i=1}^n t(e_1,W_i)^{p_n\cdot e_i(H) / e(h)}\right)
    \cdot \prod_{i=1}^n t(e_1,W_i)^{e_i(H)\cdot (p_i - p_n) / e(h)}
    \ d\mu(x)
    \\
    & \leq
    \left(\int_X \prod_{i=1}^n t(e_1,W_i)^{e_i(H) / e(H)}\right)^{p_n\cdot e(H)/e(h)}
    \cdot
    \prod_{i=1}^n t(\rho,W_i)^{e_i(H)\cdot (p_i - p_n)/e(h)}
    \\
    & =
    t(\rho_H,W)^{p_n\cdot e(H)/e(h)}
    \cdot
    \prod_{i=1}^n t(\rho,W_i)^{e_i(H)\cdot (p_i - p_n)/e(h)}.
  \end{align*}
  Plugging the $e(h)$th power of this inequality in~\eqref{eq:thWcolor} then gives $t(h,W)\geq
  t(\rho_h,W)^{e(h)}$, that is, $h = H^{\vec{p}}$ is color-Sidorenko.
\end{proof}

We can finally prove Theorem~\ref{thm:leftweakHolderorbits} (restated below).

\leftweakHolderorbits*

\begin{proofof}{Theorem~\ref{thm:leftweakHolderorbits}}
  First, by~\cite[Theorem~2]{Sid91}, the class of strong Sidorenko bigraphs is closed under amalgamation with
  a single edge along a vertex; thus, by an inductive application of this result, it is sufficient to prove
  the case when all vertices on the right of $G$ have degree at least $2$, or equivalently, we have $d_G(U) =
  0$ whenever $\lvert U\rvert\leq 1$.
  
  Without loss of generality, let us assume $C_H = \im(c_H)$.

  Since $H$ is right-uniform and does not have isolated vertices, we can define a coloring
  $r_H\function{V_2(H)}{C_H}$ of the right side of $H$ by letting $r_H(w)$ be the color of any (equivalently, all)
  edges incident to $w$. Given a set $U\subseteq V_1(H)$, let us also define
  \begin{align*}
    D_G(U) & \df \{w\in V_2(G) \mid N_G(w) = U\}, &
    D_H(U) & \df \{w\in V_2(H) \mid N_H(w) = U\},
  \end{align*}
  so that $d_G(U)=\lvert D_G(U)\rvert$ and $d_H(U)=\lvert D_H(U)\rvert$. For each $i\in C_H$, let
  \begin{align*}
    \cU_i & \df \{U\subseteq V_1(H) \mid \exists w\in D_H(U), r_H(w) = i\}.
  \end{align*}
  Note that for $U\in\cU_i$, if $\Aut(H)\cdot U$ is the orbit of $U$ under the action of
  $\Aut(H)$, then color-edge-transitivity of $H$ implies that $\Aut(H)\cdot U=\cU_i$.

  \begin{claim}\label{clm:colornhdcount}
    For every $U_1,U_2\subseteq V_1(H)$ and every $i\in C_H$, if $D_H(U_1)\cap r_H^{-1}(i)$ and $D_H(U_2)\cap
    r_H^{-1}(i)$ are non-empty, then $\lvert U_1\rvert = \lvert U_2\rvert$ and $\lvert D_H(U_1)\cap
    r_H^{-1}(i)\rvert = \lvert D_H(U_2)\cap r_H^{-1}(i)\rvert$.
  \end{claim}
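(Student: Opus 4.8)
The plan is to prove both equalities at once by exhibiting an automorphism of $H$ that carries the first configuration to the second, produced via color-edge-transitivity. First I would fix witnesses: since $D_H(U_1)\cap r_H^{-1}(i)\neq\varnothing$, pick $w_1$ in it, and since $H$ has no isolated vertices, $U_1=N_H(w_1)\neq\varnothing$, so we may choose $v_1\in U_1$; then $(v_1,w_1)\in E(H)$, and by right-uniformity $c_H(v_1,w_1)=r_H(w_1)=i$. Doing the same on the other side yields $w_2\in D_H(U_2)\cap r_H^{-1}(i)$ together with an edge $(v_2,w_2)$ of color $i$. Since the two chosen edges have the same color $i$, color-edge-transitivity of $H$ furnishes $\sigma\in\Aut(H)$ with $\sigma(v_1)=v_2$ and $\sigma(w_1)=w_2$.

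Next I would record the two elementary properties of such a $\sigma$. As an automorphism, $\sigma$ commutes with neighborhoods, $N_H(\sigma(w))=\sigma(N_H(w))$ for every $w\in V_2(H)$, hence $\sigma(D_H(U))=D_H(\sigma(U))$ for every $U\subseteq V_1(H)$; evaluating at $w_1$ gives $\sigma(U_1)=N_H(\sigma(w_1))=N_H(w_2)=U_2$, so $\lvert U_1\rvert=\lvert U_2\rvert$. As an automorphism of the \emph{colored} bigraph $H$, $\sigma$ also preserves colors of edges, and since every edge incident to $\sigma(w)$ is the $\sigma$-image of an edge incident to $w$, it preserves the induced right-coloring: $r_H(\sigma(w))=r_H(w)$ for all $w$, i.e.\ $\sigma(r_H^{-1}(i))=r_H^{-1}(i)$.

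Combining these, $\sigma$ restricts to a bijection of $D_H(U_1)\cap r_H^{-1}(i)$ onto $\sigma(D_H(U_1))\cap\sigma(r_H^{-1}(i))=D_H(U_2)\cap r_H^{-1}(i)$, which gives the desired equality of cardinalities. I do not expect any genuine obstacle; the only point requiring a little care is feeding color-edge-transitivity the right edges — one incident to $w_1$ and one incident to $w_2$, both of color $i$ — and noting that right-uniformity is exactly what makes the color of such an edge equal to $r_H(w_j)$. Everything else is a routine unwinding of the definitions of automorphism, neighborhood, $c_H$, and $r_H$.
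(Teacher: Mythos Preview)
Your proposal is correct and follows essentially the same approach as the paper: pick witness edges $(v_j,w_j)$ of color $i$ on each side, invoke color-edge-transitivity to get $\sigma\in\Aut(H)$ sending one edge to the other, and then read off $\sigma(U_1)=U_2$ and the bijection between the two sets $D_H(U_j)\cap r_H^{-1}(i)$. You are slightly more explicit than the paper in justifying $U_j\neq\varnothing$ via the no-isolated-vertices hypothesis and in spelling out why a colored-bigraph automorphism preserves $r_H$, but the argument is the same.
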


  \begin{proof}
    For $j\in[2]$, let $u_j\in U_j$ and let $w_j\in D_H(U_j)\cap r_H^{-1}(i)$ so that $c_H(u_j,w_j) =
    i$. Since $H$ is color-edge-transitive, there exists $\sigma\in\Aut(H)$ such that $\sigma(u_1) = u_2$ and
    $\sigma(w_1) = w_2$ and since $N_H(w_j) = U_j$ ($j\in[2]$), we must have $\sigma(U_1) = U_2$, hence
    $\lvert U_1\rvert = \lvert U_2\rvert$. In turn, since $\sigma(U_1) = U_2$, it also follows that $\lvert
    D_H(U_1)\cap r_H^{-1}(i)\rvert = \lvert D_H(U_2)\cap r_H^{-1}(i)\rvert$ as $\sigma$ is an automorphism.
  \end{proof}

  Let us start by proving the case in which for every $U\subseteq V_1(H)$, we have $\lvert r_H(D_H(U))\rvert\leq
  1$, that is, all vertices of $D_H(U)$ have the same color. Note that this hypothesis implies that the
  $\cU_i$ are pairwise disjoint. In fact, this along with the hypothesis~\ref{thm:leftweakHolderorbits:zero}
  of the theorem gives that each $w\in V_2(G)$ belongs to exactly one set of the form $N_G(U)$ for some
  $U\in\cU_i$ and some $i\in C_H$ (recall that the degree of $w$ is at least $2$). For each $i\in C_H$, define
  \begin{align*}
    d_i(H)
    & \df
    \frac{\lvert\cU_i\rvert}{\lvert\Aut(H)\rvert}\cdot\sum_{\sigma\in\Aut(H)} d_H(\sigma(U)),
    \\
    d_i(G)
    & \df
    \frac{\lvert\cU_i\rvert}{\lvert\Aut(H)\rvert}\cdot\sum_{\sigma\in\Aut(H)} d_G(\sigma(U)),
    \\
    p_i
    & \df
    d_i(G)/d_i(H),
    \\
    m_i
    & \df
    \lvert U\rvert,
  \end{align*}
  where $U$ is any set in $\cU_i$. Claim~\ref{clm:colornhdcount} implies that the definitions above do not
  depend on the choice of $U$. Note also that hypothesis~\ref{thm:leftweakHolderorbits:geq1} of the theorem
  gives $p_i\geq 1$. Finally, define
  \begin{align*}
    \cV_i & \df \{w\in V_2(G) \mid \exists U\in\cU_i, N_G(w) = U\}
  \end{align*}
  so that $\lvert\cV_i\rvert = d_i(G)$.

  Fix a bigraphon $W\function{\Omega\times\Lambda}{\RR_+}$ and sequences $f=(f_v)_{v\in V_1(G)}$ and
  $g=(g_w)_{w\in V_2(G)}$ of bounded measurable functions $f_v\function{\Omega}{\RR_+}$ and
  $g_w\function{\Lambda}{\RR_+}$. For each $i\in C_H$, we define the function
  $\widehat{g}_i\function{\Lambda}{\RR_+}$ by
  \begin{align*}
    \widehat{g}_i(y) & = \prod_{w\in\cV_i} g_w(y)^{1/d_i(G)},
  \end{align*}
  that is, $\widehat{g}_i$ is the geometric average of the sequence $(g_w \mid w\in\cV_i)$.

  Note now that by renaming the variables, for every $\sigma\in\Aut(H)$, we have
  \begin{align*}
    t(G;f,g;W)
    & =
    \begin{multlined}[t]
      \int_{X^{V_1(G)}\times Y^{V_2(G)}}
      \prod_{v\in V_1(G)} f_v(x_{\sigma(v)}) \prod_{w\in V_2(G)} g_w(y_{\sigma(w)})
      \\
      \cdot\prod_{(v,w)\in E(G)} W(x_{\sigma(v)},y_{\sigma(w)})
      \ d(\mu\otimes\nu)(x,y)
    \end{multlined}
    \\
    & =
    \int_{X^{V_1(G)}}
    \prod_{v\in V_1(G)} f_v(x_{\sigma(v)})
    \cdot
    \prod_{i\in C_H} \prod_{U\in\cU_i} \prod_{w\in D_G(U)} t(K_{m_i,1}^L,W_w)(x_{\sigma(U)})
    \ d\mu(x),
  \end{align*}
  where $W_w(x,y) \df W(x,y) g_w(y)$ and the second equality follows from the fact that each $w\in V_2(G)$
  belongs to exactly one set of the form $N_G(U)$ for some $U\in\cU_i$ and some $i\in C_H$. Then \Holder's
  Inequality implies
  \begin{multline}\label{eq:symmetrization}
    t(G;f,g;W)
    \geq
    \int_{X^{V_1(G)}}
    \prod_{\sigma\in\Aut(H)}
    \biggl(
    \prod_{v\in V_1(G)} f_v(x_{\sigma(v)})
    \\
    \cdot
    \prod_{i\in C_H} \prod_{U\in\cU_i} \prod_{w\in D_G(U)} t(K_{m_i,1}^L,W_w)(x_{\sigma(U)})
    \biggr)^{1/\lvert\Aut(H)\rvert}
    \ d\mu(x).
  \end{multline}

  Let us analyze each of the terms under the integral above.

  For the part corresponding to the family of functions $f$, by Lemma~\ref{lem:leftweakHolder->colorSid}, we
  know that $H$ is left-color-regular and since it is also color-edge-transitive, it follows that it is
  left-vertex-transitive. Finally, since $V_1(G) = V_1(H)$, we get
  \begin{align}\label{eq:fsymm}
    \prod_{\sigma\in\Aut(H)}
    \prod_{v\in V_1(G)} f_v(x_{\sigma(v)})^{1/\lvert\Aut(H)\rvert}
    & =
    \prod_{v_1,v_2\in V_1(G)} f_{v_1}(x_{v_2})^{1/v_1(G)}.
  \end{align}

  The other part is more involved: fix $i\in C_H$ and note that
  \begin{multline*}
    \prod_{\sigma\in\Aut(H)}\prod_{U\in\cU_i}\prod_{w\in D_G(U)}
    t(K_{m_i,1}^L,W_w)(x_{\sigma(U)})^{1/\lvert\Aut(H)\rvert}
    \\
    =
    \prod_{\sigma\in\Aut(H)}\prod_{U\in\cU_i}\prod_{w\in D_G(\sigma(U))}
    t(K_{m_i,1}^L,W_w)(x_U)^{1/\lvert\Aut(H)\rvert}.
  \end{multline*}
  Since for each $U\in\cU_i$, we have $\Aut(H)\cdot U = \cU_i$, it follows that for any given
  $(U,w)\in\cU_i\times\cV_i$, the factor $t(K_{m_i,1}^L,W_w(x_U))^{1/\lvert\Aut(H)\rvert}$ appears
  exactly $\lvert\Aut(H)\rvert/\lvert\cU_i\rvert$ times: exactly when $\sigma(U) = N_G(w)$, so we
  deduce
  \begin{align}\label{eq:gsymm}
    \prod_{\sigma\in\Aut(H)}\prod_{U\in\cU_i}\prod_{w\in D_G(U)}
    t(K_{m_i,1}^L,W_w)(x_{\sigma(U)})^{1/\lvert\Aut(H)\rvert}
    & =
    \prod_{U\in\cU_i}\prod_{w\in\cV_i}
    t(K_{m_i,1}^L,W_w)(x_U)^{1/\lvert\cU_i\rvert}.
  \end{align}

  On the other hand, by \Holder's Inequality, for each $U\in\cU_i$, we have
  \begin{equation}\label{eq:gsymm2}
    \begin{aligned}
      \prod_{w\in\cV_i} t(K_{m_i,1}^L,W_w)(x_U)^{1/\lvert\cU_i\rvert}
      & =
      \prod_{w\in\cV_i}
      \left(\int_Y \prod_{u\in U} W(x_u,y)\cdot g_w(y)\ d\nu(y)\right)^{1/\lvert\cU_i\rvert}
      \\
      & \geq
      \left(\int_Y \prod_{u\in U} W(x_u,y)
      \cdot \prod_{w\in\cV_i} g_w(y)^{1/d_i(G)}\ d\nu(y)
      \right)^{d_i(G)/\lvert\cU_i\rvert}
      \\
      & =
      t(K_{m_i,1}^L,\widehat{W}_i)(x_U)^{d_i(G)/\lvert\cU_i\rvert},
    \end{aligned}
  \end{equation}
  where $\widehat{W}_i(x,y)\df W(x,y)\cdot\widehat{g}_i(y)^{1/m_i}$.

  Putting~\eqref{eq:symmetrization}, \eqref{eq:fsymm}, \eqref{eq:gsymm} and~\eqref{eq:gsymm2} together, we get
  \begin{align*}
    t(G;f,g;W)
    & \geq
    \int_{X^{V_1(G)}}
    \prod_{v_1,v_2\in V_1(G)}
    f_{v_1}(x_{v_2})^{1/v_1(G)}
    \prod_{i\in C_H} \prod_{U\in\cU_i}
    t(K_{m_i,1}^L,\widehat{W}_i)(x_U)^{d_i(G)/\lvert\cU_i\rvert}
    \ d\mu(x)
    \\
    & =
    \int_{X^{V_1(G)}}
    \prod_{i\in C_H} \prod_{U\in\cU_i}
    t(K_{m_i,1}^L,W'_i)(x_U)^{d_i(G)/\lvert\cU_i\rvert}
    \ d\mu(x)
    \\
    & =
    \int_{X^{V_1(G)}}
    \prod_{i\in C_H} \prod_{U\in\cU_i}
    t(K_{m_i,1}^L,W'_i)(x_U)^{p_i\cdot d_i(H)/\lvert\cU_i\rvert}
    \ d\mu(x)
    \\
    & =
    t(H^{\vec{p}},W'),
  \end{align*}
  where $W'_i(x,y)\df \widehat{W}_i(x,y)\cdot\prod_{v\in V_1(G)} f_v(x)^{1/e(G)}$, the first equality
  follows since for each $i\in C_H$, each $v\in V_1(G)$ belongs to exactly $\lvert\cU_i\rvert\cdot m_i/v_1(G)$
  sets $U\in\cU_i$ (as $H$ is left-vertex-transitive and $v_1(H)=v_1(G)$) and we have
  \begin{align*}
    \sum_{i\in C_H}
    m_i\cdot
    d_i(G)
    & =
    e(G),
  \end{align*}
  and the second equality follows since $p_i = d_i(G)/d_i(H)\geq 1$. Since $H^{\vec{p}}$ is
  color-Sidorenko by Lemma~\ref{lem:leftweakHoldercolorpower}, we conclude that
  \begin{align*}
    t(G;f,g;W)
    & \geq
    t(H^{\vec{p}},W')
    \geq
    t(\rho_{H^{\vec{p}}},W')^{e(H^{\vec{p}})}.
  \end{align*}

  Note now that
  \begin{align*}
    e_i(H^{\vec{p}}) & = p_i\cdot m_i\cdot d_i(H) = m_i\cdot d_i(G),\\
    e(H^{\vec{p}}) & = \sum_{i\in C_H} m_i\cdot d_i(G) = e(G).
  \end{align*}
  This means that
  \begin{align*}
    t(\rho_{H^{\vec{p}}},W')
    & =
    \int_{X\times Y}
    \prod_{v\in V_1(G)} f_v(x)^{1/e(G)} \prod_{i\in C_H}\widehat{g}_i(y)^{d_i(G)/e(G)}
    \cdot W(x,y)
    \ d(\mu\otimes\nu)(x,y)
    \\
    & =
    \int_{X\times Y}
    \prod_{v\in V_1(G)} f_v(x)^{1/e(G)} \prod_{i\in C_H}\prod_{w\in\cV_i} g_w(y)^{1/e(G)}
    \cdot W(x,y)
    \ d(\mu\otimes\nu)(x,y)
    \\
    & =
    t\left(\rho;\prod_{v\in V_1(G)} f_v^{1/e(G)},\prod_{w\in V_2(G)} g_w^{1/e(G)};W\right).
  \end{align*}
  Therefore $G$ is a strong Sidorenko bigraph.

  \medskip

  We will now show how the general case reduces to the previous case. More specifically, we will show that if
  $G$ satisfies the hypotheses of the theorem for a general $H$, then it also satisfies the same hypotheses
  for some $H'$ satisfying $\lvert r_{H'}(D_{H'}(U))\rvert\leq 1$ for every $U\subseteq V_1(H')$.

  Given a general $H$ let $C\subseteq C_H$ be a minimal set of colors such that if $U\subseteq V_1(H)$
  satisfies $d_H(U)\geq 1$, then there exists $w\in D_H(U)$ with $r_H(w)\in C$. Let then $H'$ be the colored
  bigraph obtained from $H_C$ by removing all of its isolated vertices. Note that only vertices in $V_2(H)$
  can be removed in this procedure, so $V_1(H')=V_1(H)=V_1(G)$. It is also clear that $H'$ is right-uniform
  and by Remark~\ref{rmk:leftweakHoldersubcolor}, it follows that $H_C$ is left-weakly \Holder, which implies
  that $H'$ is also left-weakly \Holder.

  Note further that if $\sigma\in\Aut(H)$, then $\sigma\rest_{V(H')}\in\Aut(H')$, which in particular implies
  that $H'$ is color-edge transitive.

  Conversely, we claim that every element of $\Aut(H')$ is of the form $\sigma\rest_{V(H')}$ for some
  $\sigma\in\Aut(H)$. Indeed, to extend $\sigma\in\Aut(H')$ to an automorphism of $H$, we note that
  Claim~\ref{clm:colornhdcount} implies that for each $i\in C_H$ and each $U_1,U_2\in\cU_i$, there exists a
  bijection $\theta_{U_1,U_2}^i$ between $D_H(U_1)\cap r_H^{-1}(i)$ and $D_H(U_2)\cap r_H^{-1}(i)$, so
  defining
  \begin{align*}
    \sigma(w) & \df \theta_{N_H(w),\sigma(N_H(w))}^i(w)
  \end{align*}
  for every $w\in V_2(H)\setminus V_2(H')$ with $r_H(w) = i$ gives an extension of $\sigma$ to an automorphism
  of $H$.

  In particular, this means that the orbits of the actions of $\Aut(H)$ and $\Aut(H')$ on $V_1(H)=V_1(H')$ are
  the same.

  We claim that for every $U\subseteq V_1(H')$, we have $\lvert r_{H'}(D_{H'}(U))\rvert\leq
  1$. Suppose not and let $U_0\subseteq V_1(H')$ be such that $D_{H'}(U_0)\cap r_{H'}^{-1}(i_1)\cap
  r_{H'}^{-1}(i_2)$ is non-empty for some $i_1\neq i_2$. Note that we must necessarily have
  $i_1,i_2\in C$, which along with Claim~\ref{clm:colornhdcount} implies that for every $U\subseteq
  V_1(H')$, we have
  \begin{align}\label{eq:i1i2}
    D_H(U)\cap r_H^{-1}(i_1)\neq\varnothing
    & \iff
    D_H(U)\cap r_H^{-1}(i_2)\neq\varnothing.
  \end{align}
  Let us now show that $C\setminus\{i_2\}$ contradicts the minimality
  of $C$. From the choice of $C$, we know that if $U\subseteq V_1(H)$ is such that $d_H(U)\geq 1$,
  then there exists $w\in D_H(U)$ with $w\in C$, but from~\eqref{eq:i1i2}, we conclude that there
  exists $w'\in D_H(U)$ with $w'\in C\setminus\{i_2\}$, thus $C\setminus\{i_2\}$ also satisfies the
  same property defining $C$, contradicting its minimality. This concludes the proof of the
  claim. Thus, the previous case of the theorem can be applied to $H'$.

  It remains to show that the hypotheses~\ref{thm:leftweakHolderorbits:zero}
  and~\ref{thm:leftweakHolderorbits:geq1} of the theorem for $G$ and $H$ imply that the same
  hypotheses hold for $G$ and $H'$. But indeed, from the definition of $H'$ and since the orbits of
  the actions of $\Aut(H)$ and $\Aut(H')$ on $V_1(H)=V_1(H')=V_1(G)$ are the same, we get
  \begin{align*}
    d_H(U) & \geq d_{H'}(U), &
    d_H(U) = 0 & \iff d_{H'}(U) = 0
  \end{align*}
  for every $U\subseteq V_1(H)$, so the hypotheses~\ref{thm:leftweakHolderorbits:zero}
  and~\ref{thm:leftweakHolderorbits:geq1} of the theorem for $G$ and $H$ imply that the same hypotheses hold
  for $G$ and $H'$.
\end{proofof}

\section{Conclusion and open problems}
\label{sec:conc}

In this paper, we have shown how left-sided analogues of the concepts of reflection bigraphs and
cut-percolating bigraphs can be used to obtain induced-Sidorenko bigraphs. We also showed that the
left-sided version of the weakly \Holder\ property, along with color-edge transitivity and
right-uniformity, also follow from left-reflection and can be used along with a standard
symmetrization technique to obtain the strong Sidorenko property.

\medskip

In the proof of Theorem~\ref{thm:leftcutperc->indSid}, we exploited heavily the fact that weak
domination (hence also the induced-Sidorenko property) only uses target bigraphons that are
biregular. On the other hand, Szegedy~\cite{Sze15b} showed that there is no loss in generality in
studying Sidorenko's Conjecture only when the target bigraphs are both edge-vertex-transitive, so it
is natural to ask what advantages this stronger assumption can yield.

As mentioned in the introduction, it was shown in~\cite{DGHRR18} that the weak norming property can
be equivalently restated as an extremal property called step Sidorenko property, which was studied
in~\cite{KMPW19} and implicitly in~\cite[\S14.2]{Lov12}. Since both the induced-Sidorenko property
and the left-weak \Holder-property are weaker analogues of the weak norming property, it is natural
to ask if there are similar characterizations of them in terms of an extremal property.

In~\cite[Conjectures~6.1 and~6.2]{CL17}, Conlon--Lee conjectured that a bigraph is weakly norming if
and only if it is edge-transitive under its cut-involution group. A similar but different conjecture
is that a bigraph is cut-percolating if and only if it is edge-transitive under the group generated
by cut-involutions coming from folds (see Remark~\ref{rmk:cutinv}). Analogously, one can conjecture
that a bigraph is left-cut-percolating if and only if it is left-vertex-transitive under the group
generated by cut-involutions coming from folds. Since there is a mismatch between the fact that the
left-cut-percolation property is defined for bigraphs and left-weakly \Holder\ property is defined
for \emph{colored} bigraphs, we believe that a decent analogous conjecture relating the two might
require some hypotheses how the coloring relates to folds
(cf.~Theorem~\ref{thm:leftcutperc->leftweakHolder}).

\section*{Acknowledgment}

I am grateful to Alexander Sidorenko for bringing to my attention the references~\cite{Sid20,LS21,LS22}.

\appendix

\section{Reflective tree decompositions}
\label{sec:reftree}

This section contains the definition of reflective tree decompositions (and the definition of
$2$-cores required by it) and the associated result from~\cite{CR21}.

\begin{definition}[$2$-cores]
  For a connected bigraph $G$, the \emph{$2$-core} of $G$ is the maximal connected subgraph $C_2(G)$
  in which all vertices have degree at least $2$. Alternatively, $C_2(G)$ can be obtained from $G$
  by progressively removing, in an arbitrary order, vertices of degree less than $2$ until no such
  vertices remain.

  For a flag $F = (G,\theta)$ with $G$ connected, the \emph{$2$-core} of $F$ is the flag of the form
  $F' = (G',\theta)$, where $G'$ is the maximal subgraph in which all vertices that are \emph{not}
  in $\im(\theta)$ have degree at least two; this can of course be obtained by progressively
  removing vertices of degree less than $2$ that are not in $\im(\theta)$ until no such vertices
  remain.
\end{definition}

\begin{remark}\label{rmk:2core}
  Since in the definition of weak domination the target bigraphons are biregular, it follows that
  $G$ weakly dominates $H$ if and only if $C_2(G)$ weakly dominates $C_2(H)$.
\end{remark}

\begin{definition}[Reflective tree decompositions]
  Given a connected non-trivial bigraph $G$, a \emph{reflective tree decomposition} of $G$ is a tree
  $T$ such that
  \begin{enumerate}
  \item We have $V(T)\subseteq 2^{V(G)}$ and $V(G) = \bigcup_{U\in V(T)} U$.
  \item For every $(v,w)\in E(G)$, there exists $U\in V(T)$ with $v,w\in U$.
  \item For every $U_1,U_2\in V(T)$ and every $U_3\in V(T)$ in the unique path from $U_1,U_2\in
    V(T)$ and every $U_3\in V(T)$ in the unique path from $U_1$ to $U_2$ in $T$, we have $U_1\cap
    U_2\subseteq U_3$.
  \item For every $\{U_1,U_2\}\in E(T)$, we have $C_2(F_{U_1 U_2})\cong C_2(F_{U_2 U_1})$, where
    $F_{U_i U_j}\df (G\rest_{U_i}, U_1\cap U_2)$ (we assume that each vertex of $U_1\cap U_2$
    receives the same label in $F_{U_1 U_2}$ as in $F_{U_2 U_1}$).
    \label{it:reflection}
  \end{enumerate}

  Condition~\ref{it:reflection} above implies that $C_2(G\rest_{U_1})\cong C_2(G\rest_{U_2})$ for
  every $U_1,U_2\in V(T)$; this common $2$-core bigraph is called the \emph{core} of the
  decomposition.
\end{definition}

\begin{theorem}[\protect{\cite[Theorem~3.5]{CR21}}]
  If $T$ is a reflective tree decomposition of a non-trivial connected bigraph $G$ whose core $H$
  weakly dominates $G\rest_{U_1\cap U_2}$ for every $\{U_1,U_2\}\in E(T)$, then $G$ weakly dominates
  $H$.

  In particular, if $H$ is a Sidorenko bigraph, then $G$ is also a Sidorenko bigraph.
\end{theorem}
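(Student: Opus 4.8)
The plan is to induct on the number of bags $\lvert V(T)\rvert$, working throughout with $2$-cores. By Remark~\ref{rmk:2core} and $C_2(H)=H$ it suffices to show that $C_2(G)$ weakly dominates $H$, and since weak domination is transitive (it merely compares the ratios $t(\place,W)/t(\rho,W)^{e(\place)}$ over biregular non-zero $W$) it is enough to interpolate through auxiliary bigraphs at each step. The base case $\lvert V(T)\rvert=1$ is immediate: then the single bag is $V(G)$, so $G=G\rest_{V(G)}$ and $H=C_2(G)$, whence $G$ weakly dominates $H$ by Remark~\ref{rmk:2core}.

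For the inductive step I would pick a leaf $U_1$ of $T$ with unique neighbor $U_2$ and set $S\df U_1\cap U_2$. By the running-intersection property the private vertices $U_1\setminus S$ lie in no other bag, so $T-U_1$ is a reflective tree decomposition of $G_-\df G-(U_1\setminus S)$; each remaining bag induces the same subgraph as in $G$, so the core is still $H$ and the hypothesis that $H$ weakly dominates each separator is preserved. By the inductive hypothesis $G_-$ weakly dominates $H$, so by transitivity it remains only to prove that $G$ weakly dominates $G_-$. For this I would use three moves. (i) Since $U_1$ is a leaf, $G$ is the amalgamation $G\rest_{U_1}\sqcup_S G_-$ over $S$; because weak domination tests only biregular bigraphons, I may replace the flag $(G\rest_{U_1},S)$ by its $S$-relative $2$-core $K\df C_2((G\rest_{U_1},S))$ without changing $t(\place,W)/t(\rho,W)^{e(\place)}$ — peeling a private vertex of degree $\le 1$ contributes exactly a factor $t(\rho,W)$ by biregularity — and one has $C_2(K)=H$, so $t(K,W)=t(\rho,W)^{e(K)-e(H)}t(H,W)$. (ii) The reflective condition on the edge $\{U_1,U_2\}$ furnishes an isomorphism $K=C_2((G\rest_{U_1},S))\cong C_2((G\rest_{U_2},S))$ fixing $S$ pointwise, which identifies the piece $K$ hanging off $S$ on the $U_1$-side with a $2$-core of the piece that already sits inside $G_-$ in bag $U_2$. (iii) Fixing the separator coordinates $\vec s\in X^{S\cap V_1(G)}\times Y^{S\cap V_2(G)}$ and writing $t(G,W)=\int t((K,S),W)(\vec s)\cdot t((G_-,S),W)(\vec s)\ d\vec s$, I would apply a Cauchy--Schwarz/Jensen step in $\vec s$ in the spirit of Lemma~\ref{lem:CStree} to fold the extra copy of $K$ against its mirror inside $G_-$, and absorb the resulting residual factor using exactly the hypothesis that $H$ weakly dominates $G\rest_S$; the powers of $t(\rho,W)$ then cancel by the edge-count identity $e(G)=e(G_-)+e(K)-e(G\rest_S)$ read off the amalgamation after the $2$-core reduction, just as in the two-bag case $V(T)=\{U_0,U_1\}$.

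The hard part is carrying out move (iii) in full generality. When $V(T)=\{U_0,U_1\}$ the reduced graph is literally two copies of $K$ glued along $S$, and the Cauchy--Schwarz is a clean Jensen inequality with weight $t((G\rest_S,S),W)(\vec s)$ whose leftover is precisely the separator-domination hypothesis; but for a larger tree the mirror copy of $K$ inside $G_-$ need not be cleanly separated, since its vertices may lie in several bags and carry further edges. I therefore expect the induction to require a flag-level strengthening of weak domination — a pointwise-in-$\vec s$ comparison for appropriate labellings — rather than only the closed-graph statement, together with careful bookkeeping of the $t(\rho,W)$-exponents through the iterated $2$-core reductions; the rest of the argument is formal.
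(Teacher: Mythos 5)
The statement in question is not proved in this paper at all: it is quoted in Appendix~\ref{sec:reftree} directly from~\cite[Theorem~3.5]{CR21} for the reader's convenience, so there is no internal proof against which to compare your attempt. I can therefore only assess the proposal on its own terms, and it falls short in exactly the place you yourself flag.

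Your base case and the two-bag case are fine: for $V(T)=\{U_0,U_1\}$, after the $S$-relative $2$-core reduction the graph really is two copies of $K$ glued along $S$, the Jensen step with weight $t((G\rest_S,S),W)(\vec s)$ gives $t(G,W)\geq t(\rho,W)^{a_0+a_1}\,t(K,W)^2/t(G\rest_S,W)$, and substituting $t(K,W)=t(H,W)\,t(\rho,W)^{e(K)-e(H)}$ and the hypothesis $t(G\rest_S,W)\leq t(H,W)\,t(\rho,W)^{e(G\rest_S)-e(H)}$ does reproduce $t(G,W)\geq t(H,W)\,t(\rho,W)^{e(G)-e(H)}$. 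The problem is the inductive step. Reducing to ``$G$ weakly dominates $G_-$'' and invoking transitivity is clean bookkeeping, but the intermediate claim does not follow from the tools you bring to bear. Writing $t(G,W)=\int_{\vec s} t(\rho,W)^{a_1}\,h(\vec s)\,t((G_-,S),W)(\vec s)\,d\vec s$ with $h=t((K-E(G\rest_S),S),W)$, the desired inequality $t(G,W)\geq t(\rho,W)^{e(G)-e(G_-)}\,t(G_-,W)$ reduces (after cancelling the $t(\rho,W)^{a_1}$) to
\begin{align*}
  \int_{\vec s} h(\vec s)\,t((G_-,S),W)(\vec s)\,d\vec s
  & \geq t(\rho,W)^{e(K)-e(G\rest_S)}\int_{\vec s} t((G_-,S),W)(\vec s)\,d\vec s,
\end{align*}
and the only hope of proving this is the correlation between $h$ and the ``mirror'' copy of $K$ sitting inside $G_-$ over $U_2$. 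But as you observe yourself, that mirror is \emph{not} a tensor factor of $t((G_-,S),W)(\vec s)$: the vertices of $U_2\setminus S$ generically lie in further bags and carry further edges, so there is no clean factorization $g_-(\vec s)=w(\vec s)\,h(\vec s)\,r(\vec s)$ against which to fold. Without that factorization neither Cauchy--Schwarz nor Jensen produces a lower bound on $\int h\,g_-$.

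You propose to fix this with ``a flag-level strengthening of weak domination --- a pointwise-in-$\vec s$ comparison for appropriate labellings,'' but this is precisely the content that is missing: a pointwise bound $h(\vec s)\geq t(\rho,W)^{e(K)-e(G\rest_S)}$ is false in general, and any weaker, correlated version needs to be formulated and proved, not merely postulated. In other words, the inductive invariant ``$G_-$ weakly dominates $H$'' is too weak to drive the peel-a-leaf recursion; what is needed is a quantitatively stronger invariant that tracks the flag $((G_-,S),W)$ rather than just the closed density $t(G_-,W)$, and that is where the actual argument of~\cite{CR21} must differ from your sketch. The ``rest of the argument is formal'' disclaimer does not cover this: it is the crux, not the bookkeeping.
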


In the theorem above, by Remark~\ref{rmk:2core}, we have that $H$ weakly dominates $G\rest_{U_1\cap
  U_2}$ if and only if $H$ weakly dominates $C_2(G\rest_{U_1\cap U_2})$ and since the latter is a
induced subgraph of $H$, we get the following corollary when $H$ is an induced-Sidorenko bigraph.

\begin{corollary}[\cite{CR21}]
  If $G$ is a non-trivial connected bigraph with a reflective tree decomposition with an
  induced-Sidorenko core $H$, then $G$ weakly dominates $H$ and $G$ is a Sidorenko bigraph.
\end{corollary}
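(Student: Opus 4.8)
The plan is to obtain this corollary by verifying the single hypothesis of~\cite[Theorem~3.5]{CR21} quoted above — that the core $H$ weakly dominates $G\rest_{U_1\cap U_2}$ for every tree edge $\{U_1,U_2\}\in E(T)$ — and then reading off both conclusions (``$G$ weakly dominates $H$'' and ``$G$ is Sidorenko'') directly from that theorem. First I would rewrite the hypothesis in terms of $2$-cores: since weak domination is tested only against biregular bigraphons, Remark~\ref{rmk:2core} gives that $H$ weakly dominates $G\rest_{U_1\cap U_2}$ if and only if $C_2(H)$ weakly dominates $C_2(G\rest_{U_1\cap U_2})$, and because $H$ is the core of the decomposition — so that $H\cong C_2(G\rest_{U_1})$ — it is already equal to its own $2$-core, i.e.\ $C_2(H)=H$. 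Hence it suffices to prove that $H$ weakly dominates $C_2(G\rest_{U_1\cap U_2})$ for each tree edge.

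The heart of the argument is the observation that $C_2(G\rest_{U_1\cap U_2})$ is isomorphic to an \emph{induced} subgraph of $H$. I would prove this through the following general fact about the peeling operation: for any bigraph $G'$ and any $S\subseteq V(G')$, the $2$-core $C_2(G'\rest_S)$ is an induced subgraph of $C_2(G')$. Indeed, let $T$ be the set of vertices that survive the peeling of $G'\rest_S$, so that $C_2(G'\rest_S)=G'\rest_T$ and every vertex of $T$ has at least two neighbors, all of which lie in $T$; the edges witnessing this are also present in $G'$, so an induction on the peeling steps of $G'$ shows that no vertex of $T$ is ever removed (at each step each $v\in T$ still has its $\ge 2$ neighbors in $T$). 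Thus $T\subseteq V(C_2(G'))$ and $C_2(G')\rest_T=G'\rest_T=C_2(G'\rest_S)$. Applying this with $G'=G\rest_{U_1}$ and $S=U_1\cap U_2$ (so that $G'\rest_S=G\rest_{U_1\cap U_2}$), we conclude that $C_2(G\rest_{U_1\cap U_2})$ is an induced subgraph of $C_2(G\rest_{U_1})\cong H$. Since $H$ is induced-Sidorenko it weakly dominates each of its induced subgraphs, hence weakly dominates $C_2(G\rest_{U_1\cap U_2})$, which is exactly the hypothesis we needed.

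With the hypothesis verified, \cite[Theorem~3.5]{CR21} gives that $G$ weakly dominates $H$; and since an induced-Sidorenko bigraph is in particular a Sidorenko bigraph (Figure~\ref{fig:implications}; this is trivial when $H$ is edgeless), the ``in particular'' clause of the same theorem yields that $G$ is a Sidorenko bigraph. The only step requiring genuine care is the induced-subgraph claim for $2$-cores — $C_2$ does not commute with restriction to an induced subgraph in a completely formal way — but the monotone ``peeling never removes a vertex that retains two neighbors'' argument above settles it.
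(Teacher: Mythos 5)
Your proposal is correct and follows the same route as the paper: reduce via Remark~\ref{rmk:2core} to checking that $H$ weakly dominates $C_2(G\rest_{U_1\cap U_2})$, note that this $2$-core is an induced subgraph of $H\cong C_2(G\rest_{U_1})$, and invoke the induced-Sidorenko property together with~\cite[Theorem~3.5]{CR21}. The paper states the induced-subgraph containment without proof, whereas you supply the (correct) peeling argument to justify it — a worthwhile addition, but the overall structure is identical.
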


\section{Strong Sidorenko bigraphs}
\label{sec:strongSid}

In~\cite[\S 2]{Sid91}, Sidorenko defined the class $\cF$ as the class of bigraphs $G$ such that
\begin{multline}\label{eq:SidcF}
  \int_{X^{V_1(G)}\times Y^{V_2(G)}}
  \prod_{v\in V_1(G)} f_v(x_v)\cdot
  \prod_{w\in V_2(G)} g_w(y_w)\cdot
  \prod_{(v,w)\in E(G)} W(x_v,y_w)
  \ d(\mu\otimes\nu)(x,y)
  \\
  \cdot
  \left(\int_X F(x)\ d\mu(x)\right)^{e(G)-v_1(G)}
  \cdot
  \left(\int_Y G(y)\ d\nu(y)\right)^{e(G)-v_2(G)}
  \\
  \geq
  \left(
  \int_{X\times Y}
  \left(
  F(x)^{e(G)-v_1(G)}
  \cdot
  G(x)^{e(G)-v_2(G)}
  \cdot
  \prod_{v\in V_1(G)} f_v(x)
  \cdot
  \prod_{w\in V_2(G)} g_w(y)
  \right)^{1/e(G)}
  W(x,y)
  \ d(\mu\otimes\nu)
  \right)^{e(G)}
\end{multline}
for all finite measure spaces $(X,\mu)$ and $(Y,\nu)$ and all bounded measurable functions
$F,f_v\function{X}{\RR_+}$ ($v\in V_1(G)$), $G,g_w\function{Y}{\RR_+}$ ($w\in V_2(G)$) and
$W\function{X\times Y}{\RR_+}$. In fact, Sidorenko also required $e(G)\geq\max\{v_1(G),v_2(G)\}$ but
this condition follows from~\eqref{eq:SidcF} (see Remark~\ref{rmk:strongSid->Sid}).

It is obvious that any bigraph $G$ in the class $\cF$ is a strong Sidorenko bigraph in the sense of
Definition~\ref{def:strongSidorenko} by restricting to probability spaces and setting $F$ and $G$ to
be identically $1$.

For the other direction, suppose $G$ is a strong Sidorenko bigraph and $X,\mu,Y,\nu,F,f_v,G,g_w,W$
are as in~\eqref{eq:SidcF}. Replacing all functions $h$ by $h + \epsilon$ and using the Dominated
Convergence Theorem letting $\epsilon\to 0$, it is sufficient to show the case when all functions
are strictly positive. Define then the probability measures $\mu'$ and $\nu'$ by
\begin{align*}
  d\mu'(x) & \df \frac{F(x)}{M}\ d\mu(x), &
  d\nu'(y) & \df \frac{G(x)}{N}\ d\nu(y),
\end{align*}
where $M\df\int_X F\ d\mu$ and $N\df\int_Y G\ d\nu$ and let
\begin{align*}
  f'_v & \df \frac{f_v}{F}, & g'_w & \df \frac{g_w}{G}
\end{align*}
so that interpreting $W$ as a bigraphon over $(X,\mu')$ and $(Y,\nu')$, the left-hand side
of~\eqref{eq:SidcF} is written as
\begin{align*}
  t(G;f',g';W)\cdot (M\cdot N)^{e(G)}
\end{align*}
and the right-hand side of the same equation is written as
\begin{align*}
  \left(t\left(\rho;\prod_{v\in V_1(G)} (f'_v)^{1/e(G)},\prod_{w\in V_2(G)} (g'_w)^{1/e(G)};W\right)
  \cdot M\cdot N\right)^{e(G)}
\end{align*}
and thus~\eqref{eq:SidcF} follows from the fact that $G$ is a strong Sidorenko bigraph.

\bibliographystyle{alpha}
\bibliography{refs}

\end{document}